\newtheorem{theorem}{Theorem}[section]
\newtheorem{proposition}[theorem]{Proposition}
\newtheorem{lemma}[theorem]{Lemma}
\newtheorem{corollary}[theorem]{Corollary}
\theoremstyle{definition}
\newtheorem{definition}[theorem]{Definition}
\theoremstyle{remark}
\newtheorem{remark}{Remark}[section]
\newcommand{\cal}[1]{\mathcal{#1}}
\newcommand{\bb}[1]{\mathbb{#1}}
\renewcommand{\t}[1]{\text{#1}}
\renewcommand{\bar}[1]{\overline{#1}}
\renewcommand{\tilde}{\widetilde}
\newcommand{\pt}{\partial}
\DeclareMathOperator{\tr}{tr}
\DeclareMathOperator{\sinc}{sinc}
\DeclareMathOperator{\supp}{supp}
\DeclareMathOperator{\Ric}{Ric}
\DeclareMathOperator{\vol}{\! vol}
\DeclareMathOperator{\Sec}{Sec}
\DeclareMathOperator{\Hess}{Hess}
\DeclareMathOperator{\diam}{diam}
\DeclareMathOperator{\ac}{{ac}}
\DeclareMathOperator{\sff}{\text{II}}
\mathchardef\mhyphen="2D
\numberwithin{equation}{section}
\begin{document}

\title[OT Approach to Michael-Simon-Sobolev Inequalities]
{Optimal Transport Approach
to Michael-Simon-Sobolev Inequalities 
in Manifolds
with Intermediate Ricci Curvature Lower Bounds}

\author{Kai-Hsiang Wang}
\address{Department of Mathematics, Northwestern University, 2033 Sheridan Road, Evanston, Illinois IL 60208, USA}
\email{khwang2025@u.northwestern.edu}

\thanks{Affiliation: Department of Mathematics, Northwestern University, 2033 Sheridan Road, Evanston, Illinois IL 60208, USA. 
Email: khwang2025@u.northwestern.edu}


\subjclass[2020]{Primary 53C21, 49Q22; Secondary 53C23}

\keywords{optimal transport, McCann's theorem, Michael-Simon inequality, intermediate Ricci curvature}

\begin{abstract}
    We generalize McCann's theorem of optimal transport to a submanifold setting and use it to prove Michael-Simon-Sobolev inequalities for submanifolds in manifolds with lower bounds on intermediate Ricci curvatures.
    The results include a variant of the sharp Michael-Simon-Sobolev inequality in S. Brendle's [arXiv:2009.13717] when the intermediate Ricci curvatures are nonnegative.
\end{abstract}

\maketitle
Note: This preprint has not undergone peer review (when
applicable) or any post-submission improvements or corrections. The Version of Record of this
article is published in Annals of Global Analysis and Geometry, and is available online at \url{https://doi.org/10.1007/s10455-023-09934-9}
\section{Introduction}
    
    This article is two-fold.
    In the first part, we generalize McCann's theorem \cite{McC01} in optimal transport theory to a submanifold setting and obtain a corresponding change of variable formula.
    In the second part, we use these results to give  optimal transport proofs of Michael-Simon-Sobolev inequalities for compact submanifolds in complete Riemannian manifolds with intermediate Ricci curvatures  bounded from below.
    Most of the arguments are inspired by S. Brendle's \cite{Bre22}, where the Alexandrov-Bakelman-Pucci (ABP) maximum principle was used; a hint to use optimal transport can be found in Brendle's earlier work \cite{Bre21}.
    
    \subsection{Generalization of McCann's Theorem}
    In optimal transport theory, McCann's theorem provides the existence and description of optimal transport maps from one Borel probability measure to another on a complete Riemannian manifold when the cost is quadratic (distance squared) and the source measure is absolutely continuous w.r.t.\@ the volume measure.
    We refer to Section \ref{Sec_prelim} for the terminology and classical results in optimal transport theory.
    \par
    In the first part of this article, we study an optimal transport problem  from a lower dimension to the top dimension on a complete Riemannian manifold with the quadratic cost, in the sense that the two measures are absolutely continuous with respect to the volume measure of a submanifold and that of the ambient manifold, respectively.
    Our first result is the following:
    \begin{theorem}[Lemma \ref{Tangency_submfd}, Theorem \ref{gen_McCann_thm}\label{Intro_gen_McCann_thm}, Corollary \ref{superdiff_normalfib}]
    Let $M$ be a complete Riemannian manifold and $d_M$ the geodesic distance function.
    Let $\Sigma\subset M$ be a compact submanifold, possibly with boundary, and $\Omega\subset M$ a compact regular domain.
    Let $\mu$ and $\nu$ be Borel probability measures on $\Sigma$ and $\Omega$, respectively, that are absolutely continuous with respect to the corresponding volume measures.
    Let $\phi$ be a Kantorovich potential function on $\Sigma$ associated to an optimal transport plan of $\mu$ and $\nu$ with the cost function $c:=\frac{1}{2}d_M^2$ on $\Sigma \times \Omega$. 
    Then there is a measurable subset $A$ of the normal bundle $T^\perp \Sigma$ and a map $\Phi:A\to \Omega$ defined by
    \begin{equation}
        \Phi(x,v)=\exp^M_{x}(-\nabla^\Sigma \phi(x)+v)
    \end{equation}
    where $x\in \Sigma$ and $v\in T^\perp_x \Sigma$, such that 
    \begin{enumerate}
        \item $p_{T^\perp \Sigma}(A)$ has full $\mu$-measure in $\Sigma$, where $p_{T^\perp \Sigma}:T^\perp \Sigma \to \Sigma$ is the bundle projection;
        
        \item $\Phi(A)$ has full $\nu$-measure in $\Omega$;
        
        \item For $(x,v)\in A$, the curve $\gamma(t):=\exp^M_x(-t\nabla^\Sigma \phi(x)+tv)$ for $t\in [0,1]$ is a minimizing geodesic;
   
    \item The restricted map at each $x\in p_{T^\perp \Sigma}(A)$ gives a bijection:
    \begin{equation}
        \Phi(x,\cdot):A\cap T^\perp_x \Sigma \to \pt^{c_+}\phi(x)\cap \Phi(A).
    \end{equation}
    \end{enumerate}
    \end{theorem}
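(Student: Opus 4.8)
The plan is to isolate a pointwise \emph{tangency lemma} and then deduce the four conclusions by a measure-theoretic bookkeeping argument built around a measurable selection of minimizing geodesics. The tangency lemma should read: if $x_0\in\interior\Sigma$ is a point at which $\phi$ is differentiable along $\Sigma$ and $y\in\pt^{c_+}\phi(x_0)$, then for \emph{every} minimizing geodesic $\gamma\colon[0,1]\to M$ from $x_0$ to $y$ we have $\proj_{T_{x_0}\Sigma}\gamma'(0)=-\nabla^\Sigma\phi(x_0)$; in particular, setting $v:=\proj_{T^\perp_{x_0}\Sigma}\gamma'(0)$, the geodesic equals $t\mapsto\exp^M_{x_0}(-t\nabla^\Sigma\phi(x_0)+tv)$ and $y=\Phi(x_0,v)$.

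I would prove the tangency lemma by a two-sided first-variation comparison. Since $x_0$ maximizes $x'\mapsto\phi(x')-c(x',y)$ over $\Sigma$ and $\phi$ is differentiable at $x_0$, along any curve $\sigma$ in $\Sigma$ with $\sigma(0)=x_0$, $\sigma'(0)=\xi\in T_{x_0}\Sigma$,
\begin{equation*}
c(\sigma(s),y)-c(x_0,y)\ \ge\ \phi(\sigma(s))-\phi(x_0)\ =\ s\,\langle\nabla^\Sigma\phi(x_0),\xi\rangle+o(s).
\end{equation*}
For the reverse estimate, take a smooth variation $\Gamma_s$ of $\gamma$ with $\Gamma_s(0)=\sigma(s)$ and $\Gamma_s(1)=y$; then $d_M(\sigma(s),y)\le L(\Gamma_s)$, and because $\Gamma_0=\gamma$ is a geodesic the first variation of arclength gives $\tfrac12 L(\Gamma_s)^2=c(x_0,y)-s\,\langle\xi,\gamma'(0)\rangle+o(s)$, whence $c(\sigma(s),y)-c(x_0,y)\le -s\,\langle\xi,\gamma'(0)\rangle+o(s)$. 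Dividing by $s$ and letting $s\to0^{\pm}$ forces $\langle\nabla^\Sigma\phi(x_0),\xi\rangle=-\langle\gamma'(0),\xi\rangle$ for all $\xi\in T_{x_0}\Sigma$, and since both sides are tangent to $\Sigma$ this is exactly $\nabla^\Sigma\phi(x_0)=-\proj_{T_{x_0}\Sigma}\gamma'(0)$. Carrying this out without ever differentiating $c=\tfrac12 d_M^2$ (which fails on the cut locus), and keeping $\sigma(s)$ inside $\Sigma$ — which is why one restricts to interior points of $\Sigma$ — is the step I expect to be the main obstacle.

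Granting the lemma, I would fix an optimal plan $\pi$ with marginals $\mu$ and $\nu$; by Kantorovich duality $\pi$ is concentrated on $G:=\{(x,y)\in\Sigma\times\Omega : y\in\pt^{c_+}\phi(x)\}$. Since $c$ is Lipschitz on the compact set $\Sigma\times\Omega$, the $c$-concave function $\phi$ is an infimum of uniformly Lipschitz functions $x\mapsto c(x,y)-\lambda_y$ and hence Lipschitz on $\Sigma$; by Rademacher's theorem the set $D\subset\interior\Sigma$ of interior points of differentiability of $\phi$ along $\Sigma$ has full $\vol_\Sigma$-measure, and as $\vol_\Sigma(\pt\Sigma)=0$ and $\mu\ll\vol_\Sigma$ it has full $\mu$-measure. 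The correspondence sending $(x,y)\in G\cap(D\times\Omega)$ to the set of initial velocities of minimizing geodesics from $x$ to $y$ — nonempty by completeness and compactness — has closed graph, so a measurable selection theorem supplies a measurable $w(x,y)$ with $\exp^M_x w(x,y)=y$ and $|w(x,y)|=d_M(x,y)$. I then set
\begin{equation*}
A\ :=\ \bigl\{\,\bigl(x,\ \proj_{T^\perp_x\Sigma}w(x,y)\bigr)\ :\ x\in D,\ y\in\pt^{c_+}\phi(x)\cap\Omega\,\bigr\}\ \subset\ T^\perp\Sigma,
\end{equation*}
an analytic, hence $\mu$-measurable, set. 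The tangency lemma gives $\proj_{T_x\Sigma}w(x,y)=-\nabla^\Sigma\phi(x)$, so $w(x,y)=-\nabla^\Sigma\phi(x)+\proj_{T^\perp_x\Sigma}w(x,y)$; therefore $t\mapsto\exp^M_x(tw(x,y))$ is the chosen minimizing geodesic — which is (3) — and $\Phi\bigl(x,\proj_{T^\perp_x\Sigma}w(x,y)\bigr)=\exp^M_x w(x,y)=y$.

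Finally I would read off (1), (2) and (4). Because $\pi$ is concentrated on $G\cap(D\times\Omega)$, $\mu$ is concentrated on the image of $G\cap(D\times\Omega)$ under the projection to $\Sigma$, and every such $x$ satisfies $x\in D$ and $\pt^{c_+}\phi(x)\cap\Omega\ne\emptyset$, hence lies in $p_{T^\perp\Sigma}(A)$; this yields (1). Likewise $\nu$ is concentrated on the image of $G\cap(D\times\Omega)$ under the projection to $\Omega$, and each such $y$ equals $\Phi\bigl(x,\proj_{T^\perp_x\Sigma}w(x,y)\bigr)\in\Phi(A)$ for a suitable $x\in D$, so $\nu(\Phi(A))=1$, which is (2). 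For (4), fix $x\in p_{T^\perp\Sigma}(A)$, so $x\in D$: by construction $\Phi(x,\cdot)$ sends $A\cap T^\perp_x\Sigma$ onto $\pt^{c_+}\phi(x)\cap\Omega$, and since $\Phi(A)\subseteq\Omega$ while $\pt^{c_+}\phi(x)\cap\Omega\subseteq\Phi(A)$ (use the index $x$ itself in the description of $\Phi(A)$) this image is precisely $\pt^{c_+}\phi(x)\cap\Phi(A)$; moreover $y\mapsto\proj_{T^\perp_x\Sigma}w(x,y)$ is a right inverse of $\Phi(x,\cdot)$ on this set, so $\Phi(x,\cdot)$ is injective there, giving the stated bijection. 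Keeping a single selected velocity $w(x,y)$ per pair is exactly what enforces this fiberwise injectivity, which could otherwise fail when $y$ lies in the cut locus of $x$.
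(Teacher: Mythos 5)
Your proposal is correct for the statement as quoted, but it reaches it by a genuinely different construction than the paper. The tangency step is essentially the same content: where you argue via a two-sided first-variation-of-arclength comparison (which, as you note, avoids differentiating $d_M^2$ at the cut locus), the paper proves superdifferentiability of $\tfrac12 d_\zeta^2\big|_\Sigma$ along a minimizing geodesic by linearizing $(\exp^M_\zeta)^{-1}$ near the point and using a triangle-inequality trick plus a one-sided chain rule (Proposition \ref{supdiff_d^2_submfd}), then matches this supergradient with the subgradient $\nabla^\Sigma\phi(x)$ coming from $\zeta\in\pt^{c_+}\phi(x)$ (Lemma \ref{Tangency_submfd}); both routes force the tangential part of the initial velocity to be $-\nabla^\Sigma\phi(x)$. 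The real divergence is in building $A$: you take the support of the plan intersected with the differentiability set and apply a measurable selection of minimizing geodesics, defining $A$ as the image of the single-valued selection, and your fiberwise injectivity argument is sound precisely because $A\cap T^\perp_x\Sigma$ is by construction the image of the right inverse $y\mapsto \proj_{T^\perp_x\Sigma}w(x,y)$. The paper instead runs the \emph{reverse} optimal transport problem from $\nu$ to $\mu$, applies classical McCann's theorem to get $F(\zeta)=\exp^M_\zeta(-\nabla^M\phi^{c_+}(\zeta))$, uses semiconcavity/Alexandrov--Bangert and Proposition 4.1 of \cite{CMS01} to see that for a.e.\ $\zeta$ the target $F(\zeta)$ is outside the cut locus, and defines $A:=\Theta(F^{-1}(K)\cap L)$ where $\Theta$ is an explicit two-sided inverse of $\Phi$. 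What each buys: your route is more elementary (no appeal to the classical McCann theorem, to Alexandrov Hessians, or to the cut-locus result), needs only a selection theorem plus analytic-set measurability, and even gives the fiber map onto all of $\pt^{c_+}\phi(x)$; the paper's route yields, with no extra work, \emph{global} injectivity of $\Phi$ on $A$ and places $p_{T^\perp\Sigma}(A)$ inside the set where $\Hess^\Sigma\phi$ exists --- both of which are exactly what the subsequent change-of-variables formula (Theorem \ref{change_of_var}, via the area formula) and the Jacobian estimates require. So your argument proves items (1)--(4) as stated, but it would not by itself replace Theorem \ref{gen_McCann_thm} in the rest of the paper without a further refinement of $A$ (e.g.\ restricting to Hessian points and enforcing injectivity across different base points).
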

    Combining Theorem \ref{Intro_gen_McCann_thm} above with the transport condition, we deduce the following change of variable formula:
    \begin{theorem}[Theorem \ref{change_of_var}]\label{Intro_change_of_var}
    With the same notations as in Theorem \ref{Intro_gen_McCann_thm}, suppose $f$ and $g$ are the density functions of $\mu$ and $\nu$ w.r.t.\@ the volume measures of $\Sigma$ and $M$, respectively. 
    Then for $\mu$-a.e.\@ $x\in \Sigma$, we have
    \begin{equation*}
    \int_{A\cap T^\perp_x \Sigma}g\circ \Phi(x,v)|\det D\Phi(x,v)| dv= f(x).
    \end{equation*}
    \end{theorem}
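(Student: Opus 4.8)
The plan is to lift the optimal transport plan to the total space of the normal bundle $T^\perp\Sigma$ via the map $\Psi(x,v):=(x,\Phi(x,v))$, to read off the lifted measure there by means of the area formula for $\Phi$, and then to disintegrate over $\Sigma$ and apply Fubini. Write $m$ for the measure $dv\,d\vol_\Sigma(x)$ on the relevant portion of the total space of $T^\perp\Sigma$ --- the one relative to which $|\det D\Phi|$ is the Jacobian of $\Phi$ --- and write $p_\Sigma,p_\Omega$ for the two projections of $\Sigma\times\Omega$. First I would lift the plan: let $\pi$ be an optimal transport plan of $\mu$ and $\nu$ to which $\phi$ is a Kantorovich potential. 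By the classical theory, $\pi$ is concentrated on $\Gamma:=\{(x,y):y\in\partial^{c_+}\phi(x)\}$, and since in addition $p_{T^\perp\Sigma}(A)$ has full $\mu$-measure and $\Phi(A)$ has full $\nu$-measure, $\pi$ is concentrated on $\Gamma':=\Gamma\cap\big(p_{T^\perp\Sigma}(A)\times\Phi(A)\big)$. By part~(4) of Theorem~\ref{Intro_gen_McCann_thm}, $\Psi$ restricts to a Borel bijection of $A$ onto $\Gamma'$: it is injective since the first coordinate recovers $x$ and then part~(4) recovers $v$, and it is onto $\Gamma'$ again by part~(4). Hence there is a unique Borel probability measure $\lambda$ on $A$ with $(\Psi|_A)_\#\lambda=\pi$, and composing $\Psi$ with the coordinate projections gives
\[
(p_{T^\perp\Sigma})_\#\lambda=(p_\Sigma)_\#\pi=\mu,\qquad \Phi_\#\lambda=(p_\Omega)_\#\pi=\nu.
\]

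Next I would identify $\lambda$. Since $\phi$ is $c$-concave on the compact $\Sigma$ it is semiconcave, hence twice differentiable $\vol_\Sigma$-a.e.\@ by Alexandrov's theorem; thus $\nabla^\Sigma\phi$ is approximately differentiable $\vol_\Sigma$-a.e.\@ and countably Lipschitz, and together with the smoothness of $\exp^M$ this makes $\Phi$ approximately differentiable $m$-a.e.\@ on $A$ with the Lusin property~(N), so the area formula is available for $\Phi$. The $c$-transform $\phi^{c}$ is likewise semiconcave, hence differentiable $\vol_M$-a.e.\@ on $\Omega$; at a differentiability point $y$ of $\phi^{c}$ the point $x$ of any preimage $\Phi^{-1}(y)$ is uniquely determined, and then part~(4) of Theorem~\ref{Intro_gen_McCann_thm} forces the normal vector to be unique as well, so $\Phi|_A$ is injective $\lambda$-a.e. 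Moreover $\{|\det D\Phi|=0\}$ is $\lambda$-null, since $\Phi$ maps it into a $\vol_M$-null set while $\nu=\Phi_\#\lambda\ll\vol_M$. A standard argument, as in the Monge--Amp\`{e}re equation for optimal maps, now gives, for every Borel $E\subseteq\Omega$,
\[
\lambda(\Phi^{-1}(E))=\nu(E)=\int_E g\,d\vol_M=\int_{\Phi^{-1}(E)}g(\Phi(x,v))\,|\det D\Phi(x,v)|\,dm(x,v),
\]
the last equality being the area formula for $\Phi$; since $\Phi|_A$ is a Borel isomorphism onto its image off a $\lambda$-null set, this identifies $\lambda$ on $A$ as the measure $g(\Phi(x,v))\,|\det D\Phi(x,v)|\,dm(x,v)$.

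Finally I would disintegrate. Because $m=dv\,d\vol_\Sigma(x)$, Fubini's theorem applied to $\mu=(p_{T^\perp\Sigma})_\#\lambda$ gives, for every Borel $F\subseteq\Sigma$,
\[
\int_F f\,d\vol_\Sigma=\mu(F)=\lambda(p_{T^\perp\Sigma}^{-1}(F))=\int_F\left(\int_{A\cap T^\perp_x\Sigma}g(\Phi(x,v))\,|\det D\Phi(x,v)|\,dv\right)d\vol_\Sigma(x),
\]
and since $F$ is arbitrary, the asserted identity holds for $\vol_\Sigma$-a.e., in particular $\mu$-a.e., $x\in\Sigma$. The lifting step and this final Fubini argument are routine; the main obstacle is the middle step, namely justifying the area formula for the non-smooth map $\Phi$ and pinning down its density. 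That is where the substance of Theorem~\ref{Intro_gen_McCann_thm} is used: the $c$-concavity of $\phi$ provides Alexandrov differentiability, while part~(3) --- which confines each geodesic $t\mapsto\exp^M_x(-t\nabla^\Sigma\phi(x)+tv)$ to the locus where $\exp^M$ is a smooth local diffeomorphism --- is what makes $\Phi$ countably Lipschitz with the Lusin property~(N); the $\lambda$-a.e.\@ injectivity of $\Phi$ and the non-degeneracy of its Jacobian then follow from standard optimal-transport regularity.
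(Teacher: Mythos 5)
Your proof is correct (up to the same measure-theoretic technicalities the paper itself absorbs into Theorem \ref{gen_McCann_thm}), but it is organized differently from the paper's argument, so a comparison is worthwhile. Both proofs lift the optimal plan $\pi$ to the total space of $T^\perp\Sigma$ through the bijection furnished by the generalized McCann theorem, and both invoke the countably Lipschitz regularity of $\nabla^\Sigma\phi$ (semiconcavity plus Alexandrov) to run the area formula for $\Phi$; the difference is in how the two marginals are exploited. The paper disintegrates $\pi$ over $\Sigma$, pushes each conditional $\pi_x$ forward by the map $\Theta$ to a probability measure $\lambda_x$ on $A_x$, and then never identifies $\lambda_x$: it tests both expressions for $\int_\Omega h\,d\nu$ against $h=a\circ F$, where $F$ is the optimal map from $\nu$ to $\mu$, so that $h\circ\Phi(x,v)=a(x)$ and the unknown $\lambda_x$ enters only through $\lambda_x(A_x)=1$. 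You instead pin down the lifted measure $\lambda$ completely, namely $\lambda=g(\Phi)\,|\det D\Phi|\,dv\,d\vol_\Sigma$ restricted to $A$, using only the marginal $\Phi_\#\lambda=\nu$ together with a.e.\@ injectivity of $\Phi$ and the area formula, and then obtain the statement from the other marginal $(p_{T^\perp\Sigma})_\#\lambda=\mu$ by Fubini. Your route gives slightly more (an explicit description of the conditional measures, with density $f(x)^{-1}g(\Phi(x,\cdot))|\det D\Phi(x,\cdot)|$ on $A_x$), at the cost of needing cross-fiber injectivity of $\Phi$ on $A$, which the introductory statement does not list; you correctly supply it via $\vol_M$-a.e.\@ differentiability of $\phi^{c_+}$ and uniqueness of the base point, and it is in any case asserted in part (1) of Theorem \ref{gen_McCann_thm}. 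Two minor points of care: in the display identifying $\lambda$ the domain of integration should be $\Phi^{-1}(E)\cap A$, since injectivity and the area-formula bookkeeping are only available on $A$; and producing the lift $\lambda=(\Psi|_A)^{-1}_\#\pi$ requires measurability of the inverse (Lusin--Souslin, together with the measurability of $A$, $p_{T^\perp\Sigma}(A)$, $\Phi(A)$ proved in the paper), the same analytic-set technicality the paper handles in Theorem \ref{gen_McCann_thm}. Neither point is a genuine gap.
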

    
    Roughly speaking, Theorem \ref{Intro_gen_McCann_thm} identifies the $c$-superdifferential $\pt^{c_+}\phi(x)$ with a specific part of the normal fiber up to null subsets.
    Compared with the classical McCann's theorem, where  point-wise transportation takes place, our result matches the intuition that in order to compensate the dimensional difference between the source and the target measure, the set-valued map $\pt^{c_+}\phi$ spreads out the point mass at $x$ over the fiber $A\cap T^\perp_x \Sigma$ of dimension equal to the codimension of the submanifold, which is exactly the content of Theorem \ref{Intro_change_of_var}.
    On the other hand, we also observe the same phenomenon, a.k.a.\@ "displacement interpolation," that mass transportation occurs along minimizing geodesics.
    \par    
    In the literature, optimal transport problems involving submanifolds or dimensional differences have been considered by several authors, e.g.,
\begin{itemize}
    \item In W. Gangbo and R. J. McCann's \cite{GM00}, they considered an optimal transport problem between measures on hypersurfaces in the Euclidean space.
    Their result about the specific shape of $\pt^{c_+}\phi$ (Corollary 1.5, Lemma 1.6 in \cite{GM00}) can almost be recovered by Theorem \ref{Intro_gen_McCann_thm} (more precisely by Lemma \ref{Tangency_submfd}) since the map $\Phi$ becomes linear in the variable $v$ of normal vectors when the ambient space $M$ is Euclidean.
    
    \item P. Castillon's \cite{Cas10} proved a sharp weighted Michael-Simon-Sobolev inequality for submanifolds in the Euclidean space.
    In the article, Castillon studied an optimal transport problem from a measure on a submanifold to a measure on a linear subspace of the same dimension as the submanifold, and the arguments involved Euclidean orthogonal projections.
    In contrast, our approach is closer to the classical optimal transport problem by "raising" the dimension to the top and considering the transportation between objects of the same top dimension (the set $A$ and $\Omega$ in Theorem \ref{Intro_gen_McCann_thm}).
    
    \item B. Pass's \cite{Pas12} and McCann and Pass's \cite{MP20} studied an optimal transport problem from a higher dimension to a lower dimension with general cost functions.
    This setting is actually considered in the proof of Theorem \ref{Intro_gen_McCann_thm} (see Theorem \ref{gen_McCann_thm}), and the foliation structure in \cite{Pas12} is realized as the normal bundle structure in our case.
    In addition, Theorem 2 in \cite{MP20} is a similar change of variable formula as Theorem \ref{Intro_change_of_var}.
    
    \item C. Ketterer and A. Mondino's \cite{KM18} considered an optimal transport problems between measures of the same dimension which is smaller than that of the ambient space.
    More precisely, these measures are supported on rectifiable subsets of the same smaller dimension and absolutely continuous with respect to the corresponding Hausdorff measure.
    We are inspired by \cite{KM18} to use lower bounds on the intermediate Ricci curvature in the proof of Michael-Simon-Sobolev inequalities in Section \ref{Sec_Michael_Simon}.
    
    \item A recent paper \cite{BE22} by S. Brendle and M. Eichmair used the Kantorovich dual functions from optimal transport theory to prove a  Michael-Simon-Sobolev inequality in the Euclidean space.
    In comparison, we use the graph-like property of the support of an optimal transport plan (see e.g. Theorem 2.13 in L. Ambrosio and N. Gigli's \cite{AG13}; also see Theorem \ref{funda_OT}).
\end{itemize}

    To the best of our knowledge, the optimal transport problem we consider here from a lower dimension to the top dimension is new.
    We speculate future application of these results and possible generalizations to non-smooth settings, e.g. for subspaces in metric measure spaces.

    \subsection{Michael-Simon-Sobolev inequalities}
    In the second part of this article, we use the results from the first part to prove Michael-Simon-Sobolev inequalities for compact submanifolds in complete Riemannian manifolds with intermediate Ricci curvatures bounded from below.
    The results will be divided into 3 cases where the lower bounds are either zero, positive, or negative.
        \par
    In the literature, intermediate Ricci curvature lower bounds have been used to prove geometric inequalities for submanifolds, e.g.\@ Brunn-Minkowski type inequalities in \cite{KM18} and a generalization of Heintze-Karcher Comparison \cite{HK78} in Y. K. Chahine's \cite{Cha20} by replacing the sectional curvature condition with integral bounds on the intermediate Ricci curvatures.

    \subsubsection{Nonnegative Intermediate Ricci curvatures}
    We begin with the cleanest case where the intermediate Ricci curvatures are nonnegative.
    The following partially sharp inequality is a variant of Theorem 1.4 in \cite{Bre22}:
    \begin{theorem}\label{Intro_Mic_Sim_interRicci_nonneg}
        Let $M$ be a complete noncompact Riemannian manifold of dimension $n+m$ with $\Ric_n, \Ric_m\geq 0$ and $m\geq 2$.
        Let $\theta$ be the asymptotic volume ratio of $M$:
        \begin{equation}\label{def_AVR}
            \theta:=\lim_{r\to \infty} \frac{\vol_M(B_r(x_0))}{|B^{n+m}|r^{n+m}}
        \end{equation}
        for some (any) point $x_0\in M$.
        Let $\Sigma\subset M$ be a compact submanifold of dimension $n$, possibly with boundary $\pt \Sigma$. Let $H$ be the mean curvature vector of $\Sigma$.
        Let $f$ be a positive smooth function on $\Sigma$.
        Then
        \begin{equation*}
        n\theta^\frac{1}{n}\left(\frac{(n+m)|B^{n+m}|}{m|B^m|}\right)^\frac{1}{n}\left( \int_\Sigma f^\frac{n}{n-1}\right)^\frac{n-1}{n}\leq\int_\Sigma |\nabla^\Sigma f |+\int_{\pt \Sigma}f+\int_\Sigma f|H|.
        \end{equation*}
    Here, $\Ric_n$ and $\Ric_m$ are the intermediate $n$- and $m$-Ricci curvatures of $M$ (see the definitions in Section \ref{Sec_Michael_Simon}), and $|B^k|$ is the volume of the unit ball in $\bb{R}^k$.
    \end{theorem}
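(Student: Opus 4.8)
The plan is to apply the change of variable formula (Theorem \ref{Intro_change_of_var}) to a carefully chosen optimal transport problem. Fix the positive smooth function $f$ on $\Sigma$ and set $\mu$ to be the probability measure with density proportional to $f^{n/(n-1)}$ with respect to the volume measure of $\Sigma$; normalize so that $\int_\Sigma f^{n/(n-1)} = 1$, which we may assume by rescaling (the inequality is scale-covariant in $f$). For the target measure $\nu$, I would take the normalized restriction of $\vol_M$ to a large geodesic ball $B_r(x_0)$, or rather work with $\Omega = \Sigma$-adapted regular domains and then pass to a limit $r \to \infty$. Let $\phi$ be a Kantorovich potential and $\Phi(x,v) = \exp^M_x(-\nabla^\Sigma\phi(x) + v)$ the transport map from Theorem \ref{Intro_gen_McCann_thm}, defined on the measurable set $A \subset T^\perp\Sigma$. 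The transport condition then reads, for $\mu$-a.e. $x$,
\begin{equation*}
\int_{A \cap T^\perp_x\Sigma} g\circ\Phi(x,v)\,|\det D\Phi(x,v)|\,dv = f(x)^{n/(n-1)}\cdot(\text{const}),
\end{equation*}
where $g$ is the (essentially constant) density of $\nu$.

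The core estimate is a pointwise bound on the Jacobian $|\det D\Phi(x,v)|$. Differentiating $\Phi$ at $(x,v)$ splits into a "tangential" block (variation of $x$ along $\Sigma$, involving $\nabla^{2,\Sigma}\phi$, the second fundamental form of $\Sigma$, and the behavior of Jacobi fields along the geodesic $\gamma(t) = \exp^M_x(-t\nabla^\Sigma\phi(x)+tv)$) and a "normal" block (variation of $v$ in the fiber, involving only Jacobi fields with the appropriate initial conditions). Here is where the intermediate Ricci bounds enter: along $\gamma$, one controls the determinant of the Jacobi field transport by comparing with the model space, and $\Ric_n \geq 0$ governs the $n$ directions coming from $T_x\Sigma$ while $\Ric_m \geq 0$ governs the $m$ normal directions. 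The standard trick, as in Brendle's ABP argument, is to bound $|\det D\Phi|^{1/(n+m)}$ by the arithmetic mean of eigenvalue-type quantities, apply AM-GM, and integrate the resulting Riccati/Bochner inequality along $\gamma$; the mean curvature $H$ of $\Sigma$ and the boundary term $\int_{\partial\Sigma} f$ appear as boundary contributions when one integrates by parts the $\Delta^\Sigma\phi$ term over $\Sigma$ (more precisely, over the region where $\phi$ is "$c$-subdifferentiable" into the interior versus the boundary). This yields
\begin{equation*}
1 = \nu(\Omega) = \int_{\Phi(A)} g \leq (\text{const})\int_\Sigma\Big(\tfrac{1}{n+m}\big(\,|\nabla^\Sigma f|/f + \ldots + |H|\,\big)\Big)^{n+m}\cdot(\text{normal fiber volume}),
\end{equation*}
and after carrying out the normal-fiber integration (a ball of radius depending on $|v|$, contributing the factor $|B^m|$ and the exponent bookkeeping that turns $n+m$ into $n$) and the substitution $r \to \infty$ (contributing $\theta$ via the asymptotic volume ratio), one extracts the stated inequality with the sharp constant in the nonnegative case.

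I expect the main obstacle to be the Jacobian estimate and its integration — specifically, handling the mixed tangential/normal structure of $D\Phi$ cleanly. Unlike Brendle's setting where one differentiates $\exp^M_x(\nabla u(x))$ and gets a single $(n+m)\times(n+m)$ Hessian-plus-curvature matrix, here the map is defined on the normal bundle and the "tangential" derivative genuinely feels both $\Hess^\Sigma\phi$ and the second fundamental form $\sff$ of $\Sigma$ through the ambient exponential map, so one must track how a variation in the footpoint $x$ propagates a Jacobi field with initial velocity involving $\nabla^\Sigma\phi$, $v$, and $\sff$. Getting the AM-GM step to produce exactly $|\nabla^\Sigma f| + f|H|$ (and not a weaker combination) requires choosing the right test of the Riccati comparison, and the intermediate Ricci hypothesis $\Ric_n, \Ric_m \geq 0$ must be invoked on the correct subspaces of Jacobi fields — this is the delicate point where the hypothesis of this theorem (as opposed to a full Ricci lower bound) is genuinely used. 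A secondary technical point is justifying the limit $r \to \infty$ and the measure-theoretic claims (that $\Phi(A)$ exhausts $\nu$, that the boundary set has the right measure), but these should follow from Theorem \ref{Intro_gen_McCann_thm} together with standard arguments about $c$-convex functions being twice differentiable a.e.
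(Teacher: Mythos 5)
Your overall skeleton does match the paper's (transport $f^{n/(n-1)}\vol_\Sigma$ to the normalized volume of a large domain, estimate $|\det D\Phi|$ along the transport geodesics via a Riccati comparison, integrate by parts on $\Sigma$, and let $r\to\infty$ to bring in $\theta$), but the two steps that actually carry the theorem are not right as proposed. First, the Jacobian estimate: bounding $|\det D\Phi|^{1/(n+m)}$ by an arithmetic mean and applying AM--GM over all $n+m$ directions cannot work here, because the normal-direction Jacobi fields vanish at $t=0$ (so $\det P(t)\sim t^m$ and the full trace of $Q=P^{-1}P'$ blows up like $m/t$), and because a single AM--GM only sees the full trace of the curvature term, i.e.\ the ordinary Ricci curvature, not the hypotheses $\Ric_n,\Ric_m\geq 0$ on the two relevant subspaces. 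The paper's key device is to split $Q$ into its tangential $n\times n$ block $Q_1$ and normal $m\times m$ block $Q_3$ and run two separate scalar comparisons, $\tr Q_1'\leq -\Ric_n(\mathcal{E}_t,\bar\gamma')-\frac1n(\tr Q_1)^2$ and $\tr Q_3'\leq -\Ric_m(\mathcal{N}_t,\bar\gamma')-\frac1m(\tr Q_3)^2$, with initial data $\tr Q_1\to -\Delta^{\ac}_\Sigma\phi(x)-\langle H(x),v\rangle$ (this is where $\sff$, hence $H$, enters --- not through integration by parts) and $\tr Q_3\sim m/t$; one also needs the second-variation lemma $n-\Delta^{\ac}_\Sigma\phi-\langle H,v\rangle\geq 0$ (itself using $\Ric_n\geq 0$) to keep the tangential comparison from degenerating on $[0,1]$. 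This yields $|\det D\Phi(x,v)|\leq \bigl(1-\frac1n(\Delta^{\ac}_\Sigma\phi(x)+\langle H(x),v\rangle)\bigr)^n$, i.e.\ exponent $n$ and linearity in $\Delta^{\ac}_\Sigma\phi$ after the $n$-th root, which is exactly what allows the integration-by-parts inequality $-\int_\Sigma f\Delta^{\ac}_\Sigma\phi\leq r\int_{\pt\Sigma}f+r\int_\Sigma|\nabla^\Sigma f|$ (using $|\nabla^\Sigma\phi|\leq r$ and nonpositivity of the singular part of $[\Delta_\Sigma\phi]$) to be applied. Your displayed inequality, with exponent $n+m$ and $|\nabla^\Sigma f|/f$ inside the power, does not reduce to this.

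Second, the target domain: a geodesic ball $B_r(x_0)$ gives at best the constant $n\theta^{1/n}(|B^{n+m}|/|B^m|)^{1/n}$, not the stated $n\theta^{1/n}\bigl((n+m)|B^{n+m}|/(m|B^m|)\bigr)^{1/n}$. The paper takes $\Omega=\{p\in M:\ \sigma r\leq d(x,p)\leq r\ \forall x\in\Sigma\}$, a thin annulus around $\Sigma$; since $\sigma^2r^2\leq|\nabla^\Sigma\phi(x)|^2+|v|^2\leq r^2$ on $A_x$, the fiber integral is bounded by $\frac m2|B^m|r^m(1-\sigma^2)$ (mean value theorem --- this is precisely where $m\geq 2$ is used), while $\vol(\Omega)$ is bounded below by the volume of a coordinate annulus, asymptotically $\theta|B^{n+m}|r^{n+m}(\delta^{n+m}-(\sigma(2-\delta))^{n+m})$; letting $r\to\infty$, then $\delta\to1^-$, $\sigma\to1^-$ produces the extra factor $(n+m)/m$. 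Your proposal neither identifies this construction nor locates where $m\geq2$ enters, so as written it would only prove a strictly weaker inequality even if the Jacobian estimate were repaired.
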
 
    
    Compared with Theorem 1.4 in \cite{Bre22}, we here use a weaker assumption that the intermediate Ricci curvatures of the ambient manifold are nonnegative, although our conclusion is also weaker.
    Nevertheless, our inequality is still sharp when $m=2$, and in this case the curvature condition is equivalent to $\Sec\geq 0$ (See Remark \ref{Rmk_interRicci}).
    Additionally, as Theorem 1.4 in \cite{Bre22}, our inequality also holds (sharply) for hypersurfaces with $m=1$ since we can raise the codimension by considering $M\times \bb{R}$ while preserving the asymptotic volume ratio and the curvature condition $\Ric_1\geq 0$, which is again equivalent to $\Sec\geq 0$.
    We remark that the intermediate Ricci curvature assumptions also apply to the original argument in \cite{Bre22}.
    \par
    Our optimal transport proof of Theorem \ref{Intro_Mic_Sim_interRicci_nonneg} is different from the ABP approach in \cite{Bre22}, although the structures of the proofs bear a close resemblance.
    In our proof, the Kantorovich potential $\phi$ from optimal transport (see Theorem \ref{Intro_gen_McCann_thm}) plays the same role as the solution to a linear elliptic PDE in the ABP approach, and the key computation is the estimate of the Jacobian determinant of the map $\Phi$ along the geodesics $\gamma$ from Theorem \ref{Intro_gen_McCann_thm}, which is where the curvature condition is used (see Proposition \ref{Jacobian_est_along_geodesic_submfd} and Corollary \ref{Jacobian_est_submfd}; compared with Proposition 4.6 and Corollary 4.7 in \cite{Bre22}). 
    One can also see the connection between these two approaches from the definition of the $c$-superdifferential where a fixed optimal transport plan is supported: for  $\bar{x}\in \Sigma$, we have $\zeta \in \pt^{c_+}\phi(\bar{x})$ if and only if the function 
    \begin{equation*}
        \frac{1}{2}d^2_M(\cdot,\zeta)-\phi        
    \end{equation*}
    on $\Sigma$ is minimized at $\bar{x}$; this exact statement appeared in the argument of Lemma 4.2 in \cite{Bre22}.
    \par
    In the literature, both  optimal transport and the ABP method have been applied to prove many geometric and functional inequalities.
    For the optimal transport aspect, see e.g.
    D. Cordero-Erausquin, McCann, and M. Schmuckenschl\"ager's \cite{CMS01},
    Cordero-Erausquin, B. Nazaret, and C. Villani's \cite{CNV04},
    A. Figalli, F. Maggi, and A. Pratelli's \cite{FMP10}.
    For the ABP method, see e.g. N. Trudinger's \cite{Tru94}, 
    X. Cabr\'e's \cite{Cab08},
    Y. Wang and X. Zhang's \cite{WZ13},
    Cabr\'e, X. Ros-Oton, and J. Serra's \cite{CRS16}, 
    C. Xia and X. Zhang's \cite{XZ17}.
    We remark that optimal transport also applies to more general metric measure spaces, e.g. F. Cavalletti and  Mondino's \cite{CM17}, G. Antonelli, E. Pasqualetto, M. Pozzetta, and D. Semola's \cite{APPS22}, Cavalletti and D. Manini's \cite{CM22}.
    Also see \cite{AFM20}, \cite{FM22} for other approaches.
    \par
    Among these results about inequalities, we point out that a sharp Sobolev inequality for domains in complete manifolds with nonnegative Ricci curvature was proved also in \cite{Bre22} by the ABP method, and by optimal transport in Z. Balogh and A. Krist\'aly's \cite{BK22}. 
    One goal of this article is to provide an optimal transport counterpart regarding the proof of the Michael-Simon-Sobolev inequalities for submanifolds, as in \cite{BE22} when the ambient space is Euclidean.

    \subsubsection{Positive lower bounds on Intermediate Ricci Curvatures}
    We apply our optimal transport argument to the case where the intermediate Ricci curvatures are bounded from below by some positive constants.
    The main result is the following:
    \begin{theorem}\label{Intro_interRicci_pos_tubular}
Let $M$ be a closed Riemannian manifold of dimension $n+m$ with $\Ric_n\geq (n-1)k_1$ and $\Ric_m \geq (m-1)k_2$ for some $k_1, k_2>0$ and $m\geq2$.
Let $\Sigma\subset M$ be a compact submanifold of dimension $n$, possibly with boundary $\pt \Sigma$.
    Let $H$ be the mean curvature vector of $\Sigma$. 
    Let $f$ be a positive smooth function on $\Sigma$.
For $\epsilon>0$, let $N_\epsilon$ be the $\epsilon$-tubular neighborhood of $\Sigma$.
Then we have
\begin{equation*}
\begin{split}
        &\left( \frac{\vol_M(M\backslash N_\epsilon)}{|B^m|\diam (M)^m\sinc^m\left( \epsilon\sqrt{\frac{k_2(m-1)}{m}}\right)}\right)^\frac{1}{n} \left(\int_\Sigma f^\frac{n}{n-1}\right)^\frac{n-1}{n}\\
        &\leq \cos\left(\epsilon\sqrt{\frac{k_1(n-1)}{n}}\right)\int_\Sigma f+\frac{\diam (M)\sinc\left(\epsilon\sqrt{\frac{k_1(n-1)}{n}}\right)}{n} \left(\int_{\pt \Sigma}f+\int_\Sigma |\nabla^\Sigma f|+\int_\Sigma f|H|\right). 
\end{split}
    \end{equation*}
\end{theorem}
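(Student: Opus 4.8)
The plan is to run the optimal transport argument of Theorems~\ref{Intro_gen_McCann_thm} and~\ref{Intro_change_of_var} between a measure on $\Sigma$ weighted by $f^{n/(n-1)}$ and the uniform probability measure on $M\setminus N_\epsilon$, then to insert the Jacobian estimates coming from the two intermediate Ricci bounds, and finally to integrate by parts on $\Sigma$. The skeleton is the one behind Theorem~\ref{Intro_Mic_Sim_interRicci_nonneg}, with $\vol_M(M\setminus N_\epsilon)$ in the role of the volume of a large Euclidean ball and with $\cos$, $\sinc$ replacing the affine comparison functions. Concretely, fix $\epsilon>0$; we may assume $\Omega:=M\setminus N_\epsilon$ is a compact regular domain disjoint from $\Sigma$, for otherwise $N_\epsilon=M$ and the inequality is vacuous. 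Let $\mu$ be the probability measure on $\Sigma$ with density $f^{n/(n-1)}\big/\!\int_\Sigma f^{n/(n-1)}$ with respect to $\vol_\Sigma$ and let $\nu:=\vol_M|_\Omega/\vol_M(\Omega)$; apply Theorem~\ref{Intro_gen_McCann_thm} to obtain a Kantorovich potential $\phi$, a set $A\subset T^\perp\Sigma$, and $\Phi(x,v)=\exp^M_x(-\nabla^\Sigma\phi(x)+v)$. For $(x,v)\in A$ the curve $t\mapsto\exp^M_x(t(-\nabla^\Sigma\phi(x)+v))$ is a minimizing geodesic from $x\in\Sigma$ to $\Phi(x,v)\in M\setminus N_\epsilon$, so its length satisfies
\begin{equation*}
\epsilon\ \le\ d_M\bigl(\Phi(x,v),\Sigma\bigr)\ \le\ \ell(x,v):=\bigl|{-\nabla^\Sigma\phi(x)+v}\bigr|=d_M\bigl(x,\Phi(x,v)\bigr)\ \le\ \diam(M),
\end{equation*}
and in particular $|\nabla^\Sigma\phi(x)|\le\diam(M)$ and $|v|\le\diam(M)$ on $A$. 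This two-sided control of $\ell$ is exactly what produces the arguments $\epsilon\sqrt{k_1(n-1)/n}$, $\epsilon\sqrt{k_2(m-1)/m}$ inside the $\cos$, $\sinc$ factors and the $\diam(M)$ prefactors in the statement.

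By Theorem~\ref{Intro_change_of_var}, for $\mu$-a.e.\ $x$,
\begin{equation*}
\frac{f(x)^{n/(n-1)}}{\int_\Sigma f^{n/(n-1)}}\,\vol_M(M\setminus N_\epsilon)\ =\ \int_{A\cap T^\perp_x\Sigma}\bigl|\det D\Phi(x,v)\bigr|\,dv ,
\end{equation*}
so everything reduces to an upper bound for the fiber integral. I would decompose $D\Phi(x,v)$ along the geodesic into its $n$ tangential Jacobi directions --- which carry $\Hess_\Sigma\phi(x)$ and the shape operator of $\Sigma$, hence, after taking traces, $\Delta^\Sigma\phi(x)$ and $\langle H(x),v\rangle$ --- and its $m$ normal Jacobi directions, which vanish at the initial point. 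Then Proposition~\ref{Jacobian_est_along_geodesic_submfd} and Corollary~\ref{Jacobian_est_submfd} apply: the hypothesis $\Ric_n\ge(n-1)k_1$ controls the tangential block, which after the arithmetic--geometric mean inequality on its (positive semidefinite) symmetric part contributes the $n$-th power of $\cos\bigl(\ell\sqrt{k_1(n-1)/n}\bigr)+\tfrac1n\sinc\bigl(\ell\sqrt{k_1(n-1)/n}\bigr)\bigl(-\Delta^\Sigma\phi(x)-\langle H(x),v\rangle\bigr)$, while $\Ric_m\ge(m-1)k_2$ controls the normal block by a power of $\sinc\bigl(\ell\sqrt{k_2(m-1)/m}\bigr)$. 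Using $\epsilon\le\ell\le\diam(M)$ --- together with the diameter bound coming from the positive intermediate Ricci hypotheses, which keeps $\diam(M)\sqrt{k_1(n-1)/n}$ and $\diam(M)\sqrt{k_2(m-1)/m}$ below $\pi$, so that $\cos$ and $\sinc$ are decreasing where we evaluate them --- one replaces $\ell$ by $\epsilon$ inside the comparison functions, bounds $-\langle H(x),v\rangle\le|H(x)|\diam(M)$ and $|v|\le\diam(M)$, and integrates in $v$ over the fiber (which lies in the ball of radius $\diam(M)$ in $T^\perp_x\Sigma$); this bounds the fiber integral by $|B^m|\diam(M)^m\sinc^m\bigl(\epsilon\sqrt{k_2(m-1)/m}\bigr)$ times the $n$-th power of $\cos\bigl(\epsilon\sqrt{k_1(n-1)/n}\bigr)+\tfrac1n\sinc\bigl(\epsilon\sqrt{k_1(n-1)/n}\bigr)\bigl(-\Delta^\Sigma\phi(x)+\diam(M)|H(x)|\bigr)$.

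Feeding this back into the change of variables identity, taking $n$-th roots in the resulting pointwise bound for $f(x)^{n/(n-1)}$, multiplying through by $f(x)$ (so that the left side becomes $f(x)^{n/(n-1)}$ again), and integrating over $\Sigma$, gives
\begin{equation*}
\begin{split}
&\biggl(\frac{\vol_M(M\setminus N_\epsilon)}{|B^m|\diam(M)^m\sinc^m\bigl(\epsilon\sqrt{k_2(m-1)/m}\bigr)}\biggr)^{\!1/n}\biggl(\int_\Sigma f^{n/(n-1)}\biggr)^{\!(n-1)/n}\\
&\qquad\le\ \cos\!\Bigl(\epsilon\sqrt{\tfrac{k_1(n-1)}{n}}\Bigr)\int_\Sigma f\ +\ \frac{\sinc\bigl(\epsilon\sqrt{k_1(n-1)/n}\bigr)}{n}\int_\Sigma f\bigl(-\Delta^\Sigma\phi+\diam(M)|H|\bigr),
\end{split}
\end{equation*}
and one integration by parts, $\int_\Sigma f(-\Delta^\Sigma\phi)=\int_\Sigma\langle\nabla^\Sigma f,\nabla^\Sigma\phi\rangle-\int_{\partial\Sigma}f\langle\nabla^\Sigma\phi,\eta\rangle\le\diam(M)\bigl(\int_\Sigma|\nabla^\Sigma f|+\int_{\partial\Sigma}f\bigr)$ (using $|\nabla^\Sigma\phi|\le\diam(M)$ and $f\ge0$), converts the right-hand side into the claimed expression. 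The main obstacle is the Jacobian estimate itself --- controlling the full $(n+m)$-dimensional Jacobian of $\Phi$ along the geodesic, where the tangential and normal Jacobi fields are coupled through the second fundamental form of $\Sigma$, and carefully propagating the constraints $\epsilon\le\ell\le\diam(M)$ and $|v|\le\diam(M)$ through the trigonometric comparison functions; this is precisely the content of Proposition~\ref{Jacobian_est_along_geodesic_submfd} and Corollary~\ref{Jacobian_est_submfd}. A secondary technical point is that the Kantorovich potential $\phi$ is only semiconcave, so the pointwise a.e.\ Laplacian appearing in the Jacobian bound dominates the distributional Laplacian when paired against $f\ge0$, which is what makes the integration by parts above rigorous; this is routine in this setting.
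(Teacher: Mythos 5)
Your overall architecture is the paper's: same choice of $\Omega=M\setminus N_\epsilon$, same pair of measures, same use of Theorems \ref{gen_McCann_thm} and \ref{change_of_var}, the bounds $\epsilon\le|{-\nabla^\Sigma\phi(x)+v}|\le\diam(M)$, the fiber-volume bound $|B^m|\diam(M)^m$, and the integration by parts of Lemma \ref{int_by_part_submfd} with $r=\diam(M)$. The gap is in the one step you yourself flag as the main obstacle, the Jacobian estimate. First, Proposition \ref{Jacobian_est_along_geodesic_submfd} and Corollary \ref{Jacobian_est_submfd} do not "apply" as stated: they are proved under $\Ric_n,\Ric_m\ge 0$ and give the affine bracket, so the trigonometric estimate must be rederived. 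More seriously, your derivation produces comparison functions with the actual speed $\ell$ inside them and then replaces $\ell$ by $\epsilon$ "since $\cos$ and $\sinc$ are decreasing." That monotonicity argument is valid for the factor $\sinc^m(\ell\sqrt{k_2(m-1)/m})$ and for the lone cosine, but the tangential bracket is $\cos(\ell c_1)+\tfrac1n\sinc(\ell c_1)\,X$ with $c_1=\sqrt{k_1(n-1)/n}$ and $X=-\Delta^{\ac}_\Sigma\phi(x)-\langle H(x),v\rangle$, and $X$ has no sign (Lemma \ref{Lap_lowb_submfd} only gives $X\ge -n$). When $X<0$, shrinking the argument increases $\sinc$ and hence makes that term more negative, so $\cos(\ell c_1)+\tfrac1n\sinc(\ell c_1)X\le\cos(\epsilon c_1)+\tfrac1n\sinc(\epsilon c_1)X$ does not follow from monotonicity of $\cos$ and $\sinc$; for $X$ near $-n$ and arguments not small, $s\mapsto\cos(sc_1)+\tfrac1n\sinc(sc_1)X$ can be increasing. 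Your auxiliary claim that positivity of the intermediate Ricci curvatures keeps $\diam(M)\sqrt{k_1(n-1)/n}$ and $\diam(M)\sqrt{k_2(m-1)/m}$ below $\pi$ is itself unproved (it would need a Myers-type theorem for intermediate Ricci, and what is easily available, $\Ric\ge(n-1)k_1+(m-1)k_2$, does not give it for all $k_1,k_2$), and even granting it the sign problem for $X$ remains.

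The paper sidesteps all of this by inserting the speed bound into the Riccati equation rather than into its solution: since $k_1,k_2>0$ and $|\bar{\gamma}'|\ge\epsilon$ on $A$, one has $-\tr_n S(t)\le-(n-1)k_1\epsilon^2$ and $-\tr_m S(t)\le-(m-1)k_2\epsilon^2$, so the differential inequalities for $\tr Q_1$ and $\tr Q_3$ already carry $\epsilon$, and the ODE comparison directly produces $\cos\bigl(\epsilon c_1\bigr)+\tfrac1n\sinc\bigl(\epsilon c_1\bigr)X$ and $\sinc^m\bigl(\epsilon\sqrt{k_2(m-1)/m}\bigr)$; non-blowup of the comparison functions is guaranteed by the continuity argument of Remark \ref{rmk_nonblowup}, with no diameter restriction needed. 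If you replace your "estimate with $\ell$, then monotone substitution" step by this modification of Proposition \ref{Jacobian_est_along_geodesic_submfd}, the remainder of your argument (change of variables, $-\langle H,v\rangle\le|H|\diam(M)$, fiber volume, $n$-th roots, integration by parts) goes through exactly as written.
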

The lower bounds here on $\Ric_n$ and $\Ric_m$ are written in this way since $\Sec\geq k>0$ implies $\Ric_n\geq (n-1)k$ and $\Ric_m\geq (m-1)k$ in general. 
We also remark that these two curvature bounds are assumed exactly to fit in our argument; they may implicitly imply one another (see Remark \ref{Rmk_interRicci}). 
Additionally, the argument can be modified for the case of hypersurfaces and give a similar inequality.
\par
Theorem \ref{Intro_interRicci_pos_tubular} potentially gives a lower bound on the volume of the tubular neighborhood $N_\epsilon$.
On the other hand, by taking $\epsilon\to 0$, we have the following Michael-Simon-Sobolev inequality for the submanifold $\Sigma$:
\begin{corollary}\label{Intro_Mic_Sim_M_closed_int_Ric_nonneg}
Let $M$ be a closed Riemannian manifold of dimension $n+m$ with  $\Ric_n,\Ric_m > 0$.
    Let $\Sigma\subset M$ be a compact submanifold of dimension $n$, possibly with boundary $\pt \Sigma$.
    Let $H$ be the mean curvature vector of $\Sigma$. 
    Let $f$ be a positive smooth function on $\Sigma$.
    Then we have
    \begin{equation*}
        \left( \frac{\vol_M(M)}{|B^m|\diam(M)^m}\right)^\frac{1}{n} \left(\int_\Sigma f^\frac{n}{n-1}\right)^\frac{n-1}{n}
        \leq \int_\Sigma f+\frac{\diam(M)}{n}\left(\int_{\pt \Sigma}f+\int_\Sigma |\nabla^\Sigma f|+\int_\Sigma f|H|\right).
    \end{equation*}
\end{corollary}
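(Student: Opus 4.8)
The plan is to derive Corollary \ref{Intro_Mic_Sim_M_closed_int_Ric_nonneg} from Theorem \ref{Intro_interRicci_pos_tubular} by a limiting argument as $\epsilon \downarrow 0$. First I would extract uniform positive lower bounds on the intermediate Ricci curvatures: since $M$ is closed and $\Ric_n,\Ric_m$ are everywhere positive, these are continuous functions on the relevant compact bundles of orthonormal $k$-frames over $M$, so they attain positive minima; hence there are constants $k_1,k_2>0$ with $\Ric_n\geq (n-1)k_1$ and $\Ric_m\geq (m-1)k_2$ throughout $M$. (We take $m\geq 2$, consistently with Theorem \ref{Intro_interRicci_pos_tubular}.) With such $k_1,k_2$ fixed, Theorem \ref{Intro_interRicci_pos_tubular} furnishes the displayed inequality, with the $\epsilon$-tubular neighborhood $N_\epsilon$ and the trigonometric prefactors, for every $\epsilon>0$.

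Next I would let $\epsilon\downarrow 0$ in that inequality. On the right-hand side this is immediate, since $\cos\bigl(\epsilon\sqrt{k_1(n-1)/n}\bigr)\to 1$ and $\sinc\bigl(\epsilon\sqrt{k_1(n-1)/n}\bigr)\to\sinc(0)=1$, so the right side tends to $\int_\Sigma f+\tfrac{\diam(M)}{n}\bigl(\int_{\pt\Sigma}f+\int_\Sigma|\nabla^\Sigma f|+\int_\Sigma f|H|\bigr)$. On the left-hand side, $\sinc^m\bigl(\epsilon\sqrt{k_2(m-1)/m}\bigr)\to 1$, and the one point needing an argument is the convergence $\vol_M(M\setminus N_\epsilon)\to\vol_M(M)$. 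For this, note $\Sigma$ is a compact submanifold of dimension $n<n+m$ and hence a null set for the $(n+m)$-dimensional volume measure, while the open tubular neighborhoods $N_\epsilon=\{\,p\in M : d_M(p,\Sigma)<\epsilon\,\}$ decrease to $\bigcap_{\epsilon>0}N_\epsilon=\Sigma$ as $\epsilon\downarrow 0$; since $\vol_M$ is finite ($M$ closed), continuity from above gives $\vol_M(N_\epsilon)\to\vol_M(\Sigma)=0$, whence $\vol_M(M\setminus N_\epsilon)\to\vol_M(M)$. Therefore the left side tends to $\bigl(\vol_M(M)/(|B^m|\diam(M)^m)\bigr)^{1/n}\bigl(\int_\Sigma f^{n/(n-1)}\bigr)^{(n-1)/n}$, and passing to the limit in the inequality of Theorem \ref{Intro_interRicci_pos_tubular} yields the assertion.

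No new estimates beyond Theorem \ref{Intro_interRicci_pos_tubular} are required, so the "main obstacle" here is really just careful bookkeeping of the limit: one must be sure the $\epsilon$-dependent factors are continuous at $0$ with the stated limiting values --- in particular that $\sinc$ is the function extended by $\sinc(0)=1$ --- and that the tubular neighborhood is well defined and shrinks to $\Sigma$, which is standard for a compact submanifold, possibly with boundary, of a closed manifold.
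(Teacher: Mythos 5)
Your proof is correct and follows essentially the same route the paper indicates: fix uniform constants $k_1,k_2>0$ by compactness of $M$, apply Theorem \ref{Intro_interRicci_pos_tubular}, and let $\epsilon\to 0$, using that $\Sigma$ is $\vol_M$-null so that $\vol_M(M\backslash N_\epsilon)\to\vol_M(M)$ while the $\cos$ and $\sinc$ factors tend to $1$. (The paper additionally remarks that the same conclusion follows directly from the proof of Theorem \ref{Intro_Mic_Sim_interRicci_nonneg} with $\Omega=M$ and $r=\diam(M)$, which even allows the weaker hypothesis $\Ric_n,\Ric_m\geq 0$.)
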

    We remark that Corollary \ref{Intro_Mic_Sim_M_closed_int_Ric_nonneg} also follows from the proof of Theorem \ref{Intro_Mic_Sim_interRicci_nonneg} by assuming $M$ is closed, and hence also holds when $\Ric_n,\Ric_m \geq 0$ (see the discussion in the beginning of Subsection \ref{Subsec_pos_inter_Ricci}).

    \subsubsection{Negative lower bounds on Intermediate Ricci Curvatures}

In the general case of negative lower bounds, we have the following local result:
\begin{theorem}\label{Intro_interRicci_neg_local}
Let $M$ be a complete Riemannian manifold of dimension $n+m$ with $\Ric_n\geq nk_1$ and $\Ric_m \geq mk_2$ for some $k_1, k_2<0$.
Let $\Sigma\subset M$ be a compact submanifold of dimension $n$, possibly with boundary $\pt \Sigma$.
    Let $H$ be the mean curvature vector of $\Sigma$. 
    Let $f$ be a positive smooth function on $\Sigma$.
Assume that $\Sigma$ is contained in some geodesic ball $B^M_{\frac{r}{2}}(x_0)$.
Then we have
\begin{equation*}
\begin{split}
        &\left( \frac{\vol_M\left(B^M_{\frac{r}{2}}(x_0)\right)(-k_2)^\frac{m}{2}}{|B^m|\sinh^m(r\sqrt{-k_2})}\right)^\frac{1}{n} \left(\int_\Sigma f^\frac{n}{n-1}\right)^\frac{n-1}{n}\\
        &\leq \cosh(r\sqrt{-k_1})\int_\Sigma f
        +\frac{\sinh(r\sqrt{-k_1})}{n\sqrt{-k_1}}\left(\int_{\pt \Sigma}f+\int_\Sigma |\nabla^\Sigma f|+\int_\Sigma f|H|\right).    
\end{split}
    \end{equation*}
\end{theorem}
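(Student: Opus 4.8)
The plan is to run the same optimal-transport machine that proves Theorems \ref{Intro_Mic_Sim_interRicci_nonneg} and \ref{Intro_interRicci_pos_tubular}, now in the regime $k_1,k_2<0$, so that the comparison functions that arise are the hyperbolic ones $t\mapsto\cosh(t\sqrt{-k_i})$ and $t\mapsto\sinh(t\sqrt{-k_i})/\sqrt{-k_i}$, the solutions of $h''+k_i h=0$. Since $\Sigma\subset B^M_{r/2}(x_0)$, take $\Omega:=\overline{B^M_{r/2}(x_0)}$, a compact regular domain containing $\Sigma$, and set
\[
\mu:=\frac{f^{\frac{n}{n-1}}}{\int_\Sigma f^{\frac{n}{n-1}}}\,\vol_\Sigma\ \text{ on }\Sigma,\qquad \nu:=\frac{1}{\vol_M(\Omega)}\,\vol_M|_\Omega\ \text{ on }\Omega.
\]
Applying Theorem \ref{Intro_gen_McCann_thm} with cost $c=\tfrac12 d_M^2$ produces a Kantorovich potential $\phi$ on $\Sigma$, a measurable set $A\subset T^\perp\Sigma$, and the map $\Phi(x,v)=\exp^M_x(-\nabla^\Sigma\phi(x)+v)$. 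For $(x,v)\in A$ the curve $\gamma(t)=\exp^M_x\!\big(t(-\nabla^\Sigma\phi(x)+v)\big)$, $t\in[0,1]$, is a minimizing geodesic between two points of $B^M_{r/2}(x_0)$, so its length $L:=\sqrt{|\nabla^\Sigma\phi(x)|^2+|v|^2}$ satisfies $L\le r$; in particular $|\nabla^\Sigma\phi(x)|\le r$, $|v|\le r$, and $\gamma$ carries no interior conjugate point.

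Next I would invoke the change of variables formula (Theorem \ref{Intro_change_of_var}): for $\mu$-a.e.\ $x\in\Sigma$,
\[
\frac{f(x)^{\frac{n}{n-1}}}{\int_\Sigma f^{\frac{n}{n-1}}}=\frac{1}{\vol_M(\Omega)}\int_{A\cap T^\perp_x\Sigma}|\det D\Phi(x,v)|\,dv.
\]
The heart of the argument is the Jacobian estimate for $\Phi$ along $\gamma$, the negative-curvature counterpart of Proposition \ref{Jacobian_est_along_geodesic_submfd} and Corollary \ref{Jacobian_est_submfd} (mirroring Proposition~4.6 and Corollary~4.7 of \cite{Bre22}, with the sectional-curvature hypothesis weakened to the $\Ric_n,\Ric_m$ bounds). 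Decomposing $D\Phi(x,v)$ into its tangential Jacobi-field block and the $m$ normal (fiber) Jacobi fields, one uses the $c$-superdifferential condition---that $\tfrac12 d_M^2(\cdot,\Phi(x,v))-\phi$ attains its minimum over $\Sigma$ at $x$, so $\Hess^\Sigma\phi(x)\le\Hess^M\big(\tfrac12 d_M^2(\cdot,\Phi(x,v))\big)|_{T_x\Sigma}-\langle\sff(\cdot,\cdot),v\rangle$---together with the Heintze--Karcher-type Jacobi-field comparison: $\Ric_n\ge nk_1$ controls the $n$-th root of the tangential determinant, while $\Ric_m\ge mk_2$ controls the product of the $m$ normal Jacobi fields (whose initial data vanish). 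Tracking the trace of the tangential block (which produces $-\Delta^\Sigma\phi(x)$ and, via the second fundamental form, the contribution $-\langle H(x),v\rangle$), and using $|v|\le r$, $-\langle H(x),v\rangle\le r|H(x)|$, $L\le r$ together with the monotonicity of $\cosh$ and $t\mapsto\sinh(t)/t$, I expect the estimate to take the form
\[
|\det D\Phi(x,v)|\le\left(\cosh(r\sqrt{-k_1})+\frac{\sinh(r\sqrt{-k_1})}{n\,r\sqrt{-k_1}}\big(-\Delta^\Sigma\phi(x)+r|H(x)|\big)\right)^{\!n}\left(\frac{\sinh(r\sqrt{-k_2})}{r\sqrt{-k_2}}\right)^{\!m}
\]
for $\mu$-a.e.\ $x$ and a.e.\ $v\in A\cap T^\perp_x\Sigma$.

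Integrating over the fiber, with $A\cap T^\perp_x\Sigma\subset\{\,v\in T^\perp_x\Sigma:|v|\le r\,\}$, the normal factor contributes at most $|B^m|\,r^m\big(\sinh(r\sqrt{-k_2})/(r\sqrt{-k_2})\big)^m=|B^m|\,\sinh^m(r\sqrt{-k_2})/(-k_2)^{m/2}$ (the powers of $r$ cancel), so combining with the identity above gives, for $\mu$-a.e.\ $x$,
\[
\frac{f(x)^{\frac{n}{n-1}}}{\int_\Sigma f^{\frac{n}{n-1}}}\cdot\frac{\vol_M(\Omega)(-k_2)^{m/2}}{|B^m|\,\sinh^m(r\sqrt{-k_2})}\le\left(\cosh(r\sqrt{-k_1})+\frac{\sinh(r\sqrt{-k_1})}{n\,r\sqrt{-k_1}}\big(-\Delta^\Sigma\phi(x)+r|H(x)|\big)\right)^{\!n}.
\]
Taking $n$-th roots, multiplying by $f(x)$ and integrating over $\Sigma$ turns the left-hand side into $\big(\vol_M(B^M_{r/2}(x_0))(-k_2)^{m/2}/(|B^m|\sinh^m(r\sqrt{-k_2}))\big)^{1/n}\big(\int_\Sigma f^{n/(n-1)}\big)^{(n-1)/n}$, exactly the left-hand side of the theorem, while the right-hand side becomes $\cosh(r\sqrt{-k_1})\int_\Sigma f+\tfrac{\sinh(r\sqrt{-k_1})}{n\,r\sqrt{-k_1}}\big(-\int_\Sigma f\,\Delta^\Sigma\phi+r\int_\Sigma f|H|\big)$; since $-\int_\Sigma f\,\Delta^\Sigma\phi=\int_\Sigma\langle\nabla^\Sigma f,\nabla^\Sigma\phi\rangle-\int_{\partial\Sigma}f\,\partial_\nu\phi\le r\big(\int_\Sigma|\nabla^\Sigma f|+\int_{\partial\Sigma}f\big)$ by the divergence theorem on $\Sigma$ and $|\nabla^\Sigma\phi|\le r$, the asserted inequality follows.

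I expect the main obstacle to be the Jacobian estimate: one has to apply the intermediate-Ricci bounds to Jacobi-field frames that are not a priori orthonormal, absorb correctly the cross terms between the tangential and normal blocks and the second fundamental form, and ensure the trace term stays in the range where replacing the geodesic length $L$ by $r$ is legitimate---the same bookkeeping already carried out for the nonnegative and positive cases, only with hyperbolic comparison functions. A secondary issue is regularity: $\phi$ is merely semiconcave, so ``$\Delta^\Sigma\phi$'' and the integration by parts above are to be understood in the a.e./distributional sense provided by the second-order differentiability of $\phi$ along the transport geodesics, and the chain of pointwise estimates holds only $\mu$-a.e.; the degenerate case $\int_{\partial\Sigma}f+\int_\Sigma|\nabla^\Sigma f|+\int_\Sigma f|H|=0$ should be dispatched separately or by a routine approximation.
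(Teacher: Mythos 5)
Your proposal is correct and takes essentially the same route as the paper's proof: the same choice $\Omega=\overline{B^M_{r/2}(x_0)}$ and measures, the generalized McCann theorem plus the change of variables formula, the hyperbolic Jacobian comparison under $\Ric_n\geq nk_1$, $\Ric_m\geq mk_2$ with the speed bound $|\nabla^\Sigma\phi(x)|^2+|v|^2\leq r^2$, the fiber bound $A_x\subset\{|v|\leq r\}$ with the powers of $r$ cancelling, and Lemma \ref{int_by_part_submfd} at the end. The Jacobian estimate you only sketch is exactly the one the paper establishes, by adapting Proposition \ref{Jacobian_est_along_geodesic_submfd} via the Riccati equation for $Q=P(t)^{-1}P'(t)$ and ODE comparison for the traces of the tangential and normal diagonal blocks.
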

The lower bounds here on $\Ric_n$ and $\Ric_m$ are written in this way since $\Sec\geq k$ for some $k<0$ implies $\Ric_n\geq nk$ and $\Ric_m\geq mk$ in general. 
We also remark that these two curvature bounds are assumed exactly to fit in our argument; they may implicitly imply one another (see Remark \ref{Rmk_interRicci}). 
\par
Theorem \ref{Intro_interRicci_neg_local} has some local feature as the inequality might become trivial as $r\to \infty$, e.g.
in the $(n+m)$-hyperbolic space $\bb{H}^{n+m}$ with $k_1=-1=k_2$.
Recently, J. Cui and P. Zhao \cite{CZ22} proved a sharp Michael-Simon-Sobolev inequality for star-shaped hypersurfaces in the hyperbolic space.

\subsection{Outline}
    The outline of this article is as follows.
    In Section \ref{Sec_prelim}, we set up the terminology and summarize some classical results in optimal transport theory.
    In Section \ref{Sec_OT_Submfd}, we prove the results about optimal transport in the submanifold setting, including the generalization of McCann's theorem and the change of variable formula.
    In Section \ref{Sec_Michael_Simon}, we prove the Michael-Simon inequalities for compact submanifolds under the assumption that the ambient manifold has lower bounds of various signs on the intermediate Ricci curvatures.

\subsection*{Acknowledgment}
The author would like to thank Prof.\@ Aaron Naber for his guidance and inspiring comments.
He is also grateful to Prof.\@ Simon Brendle and Prof.\@ Andrea Mondino for their useful suggestions.
Additionally, he appreciates Prof.\@ Marco Pozzetta and Prof.\@ Gioacchino Antonelli's generosity in sharing various relevant references.
    
    \section{Preliminaries}\label{Sec_prelim}
    In this section, we set up the terminology and summarize some classical results in optimal transport theory, including McCann's theorem about optimal transport problems on Riemannian manifolds with the distance-squared cost.
    The contents here can be found in \cite{AG13}, \cite{McC01} and C. Villani's \cite{Vil09}.
    \par
        Let $X$ and $Y$ be Polish spaces equipped with Borel probability measures $\mu$ and $\nu$, respectively.
        Let $c:X\times Y\to \bb{R}$ be a measurable function.
        Viewing $c$ as the cost function, we would like to minimize the total cost to push $\mu$ forward to $\nu$.
        That is, we want to minimize
        \begin{equation}\label{OT_monge}
            \int_X c(x, T(x)) d\mu(x)
        \end{equation}
        among measurable maps $T:X\to Y$ with the push-forward condition $T_\sharp \mu=\nu$, meaning that $\mu(T^{-1}(B))=\nu(B)$ for any Borel set $B\subset Y$.
        This problem is the so-called Monge's formulation of optimal transport, and such minimizing maps are called \textit{optimal transport maps}.
        \par
        In general, Monge's formulation can be ill-posed; 
        for example, there can even be no such a map $T$ that pushes $\mu$ forward to $\nu$.
        Therefore, we may relax the problem to the following version, known as Kantorovich's formulation:
        we minimized
        \begin{equation}\label{OT_kanto}
            \int_{X\times Y} c(x,y)d \pi(x,y)
        \end{equation}
        among Borel probability measures $\pi$ on $X\times Y$ with the transport conditions $\pi\big |_X=\mu$ and $\pi\big |_Y=\nu$; such measures are called transport plans.
        Kantorovich's formulation is a relaxation since any transport map $T$ gives rise to a transport plan on $X\times Y$ by pushing $\mu$ forward via the map $\t{id}_X\times T:X\to X\times Y $, and it solves the issue above in Monge's formulation since transport plans always exist, e.g.\@ the product measure $\mu\times \nu$.
        Moreover, the following theorem gives the existence of minimizing measures under mild assumptions:
        \begin{theorem}[\cite{AG13}, Theorem 2.5]\label{exist_OT_plan}
        Assume that the cost function $c$ is lower semicontinuous and bounded from below.
        Then there exists a minimizer for Kantorovich's formulation (\ref{OT_kanto}).
        \end{theorem}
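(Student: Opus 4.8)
The plan is to argue by the \emph{direct method} of the calculus of variations: show that the admissible set of transport plans is nonempty and compact in the narrow (weak) topology of probability measures, that the cost functional is sequentially lower semicontinuous on it, and conclude that the infimum is attained.

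First I would fix notation: let $\Pi(\mu,\nu)$ denote the set of Borel probability measures $\pi$ on $X\times Y$ with marginals $\pi|_X=\mu$ and $\pi|_Y=\nu$, and let $p_X:X\times Y\to X$, $p_Y:X\times Y\to Y$ be the coordinate projections. This set is nonempty, as it contains the product $\mu\times\nu$. Next I would show $\Pi(\mu,\nu)$ is tight: since $\mu$ and $\nu$ are Borel probability measures on Polish spaces they are tight, so given $\varepsilon>0$ one finds compact sets $K_X\subset X$, $K_Y\subset Y$ with $\mu(X\setminus K_X)<\varepsilon/2$ and $\nu(Y\setminus K_Y)<\varepsilon/2$; then for every $\pi\in\Pi(\mu,\nu)$ the compact set $K_X\times K_Y$ satisfies $\pi\bigl((X\times Y)\setminus(K_X\times K_Y)\bigr)\le \mu(X\setminus K_X)+\nu(Y\setminus K_Y)<\varepsilon$. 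By Prokhorov's theorem $\Pi(\mu,\nu)$ is relatively compact in the narrow topology. It is also narrowly closed: if $\pi_k\to\pi$ narrowly, then for every bounded continuous $\varphi$ on $X$ we get $\int\varphi\,d(\pi|_X)=\int\varphi\circ p_X\,d\pi=\lim_k\int\varphi\circ p_X\,d\pi_k=\int\varphi\,d\mu$, so $\pi|_X=\mu$, and likewise $\pi|_Y=\nu$. Hence $\Pi(\mu,\nu)$ is narrowly compact.

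Then I would verify that $\pi\mapsto\int_{X\times Y}c\,d\pi$ is sequentially lower semicontinuous for narrow convergence. Since $c$ is lower semicontinuous and bounded below, fixing a metric $d$ on $X\times Y$ inducing its topology, the inf-convolutions $c_j(z):=\min\bigl(j,\ \inf_{w}(c(w)+j\,d(z,w))\bigr)$ are bounded, $j$-Lipschitz, and increase pointwise to $c$. For fixed $j$, $\pi\mapsto\int c_j\,d\pi$ is narrowly continuous, so if $\pi_k\to\pi$ narrowly then $\liminf_k\int c\,d\pi_k\ge\liminf_k\int c_j\,d\pi_k=\int c_j\,d\pi$; letting $j\to\infty$ and using monotone convergence yields $\liminf_k\int c\,d\pi_k\ge\int c\,d\pi$.

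Finally, I would take a minimizing sequence $\pi_k\in\Pi(\mu,\nu)$; by compactness a subsequence converges narrowly to some $\pi\in\Pi(\mu,\nu)$, and by lower semicontinuity $\int c\,d\pi\le\liminf_k\int c\,d\pi_k=\inf_{\pi'\in\Pi(\mu,\nu)}\int c\,d\pi'$, so $\pi$ is a minimizer. The main obstacle is the lower semicontinuity step: because $c$ is merely lower semicontinuous rather than continuous, narrow convergence of $\pi_k$ does not directly control $\int c\,d\pi_k$, and one must approximate $c$ from below by bounded continuous functions and pass to the limit with care; the remaining ingredients are a routine application of Prokhorov's theorem and of the definition of the narrow topology.
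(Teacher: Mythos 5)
Your proof is correct and follows exactly the standard direct-method argument (tightness of $\Pi(\mu,\nu)$ plus Prokhorov, narrow closedness of the marginal constraints, and lower semicontinuity of the cost via monotone approximation by bounded Lipschitz functions), which is the proof given in the cited reference \cite{AG13}; the paper itself quotes this theorem without proof.
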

        Such minimizing measures are called \textit{optimal transport plans}.
        With stronger assumptions, it can be shown that such measures are supported on the graph of set-valued functions with generalized convexity (Theorem \ref{funda_OT} below); the precise statement requires the following terminologies associated to the cost function $c$.
        We denote by $\bar{\bb{R}}$ the extended real numbers $\bb{R}\cup \{\pm \infty\}$.
        \begin{definition}[$c_+$-transform]
            Let $\psi:Y\to \bar{\bb{R}}$ be any function.
            Its \textit{$c_+$-transform} $\psi^{c_+}:X\to \bar{\bb{R}}$ is defined as
            \begin{equation*}
                \psi^{c_+}(x)=\inf_{y\in Y} c(x,y)-\psi(y).
            \end{equation*}
            Similarly, given $\phi:X\to \bar{\bb{R}}$ we can define $\phi^{c_+}:Y\to \bar{\bb{R}}$ as
            \begin{equation*}
                \phi^{c_+}(y)=\inf_{x\in X} c(x,y)-\phi(x).
            \end{equation*}
        \end{definition}
        \begin{definition}[$c$-concavity]
            A function $
            \phi:X\to \bar{\bb{R}}$ is \textit{$c$-concave} if $\phi=\psi^{c_+}$ for some $\psi:Y\to \bar{\bb{R}}$.
            Similarly, a function $
            \psi:Y\to \bar{\bb{R}}$ is $c$-concave if $\psi=\phi^{c_+}$ for some $\phi:X\to \bar{\bb{R}}$.
        \end{definition}
            A simple observation is that $\psi^{c_+}=\psi^{c_+c_+c_+}$.
            Thus we have the following lemma:
            \begin{lemma}
            $\phi$ is $c$-concave if and only if $\phi=\phi^{c_+c_+}$.
            \end{lemma}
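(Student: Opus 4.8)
The plan is to reduce the statement entirely to the observation $\psi^{c_+}=\psi^{c_+c_+c_+}$ recorded immediately above, which is the only nontrivial ingredient, and then to treat the two implications separately. For the ``if'' direction, suppose $\phi=\phi^{c_+c_+}$. Setting $\psi:=\phi^{c_+}:Y\to\bar{\bb{R}}$, we have $\phi=\psi^{c_+}$ by hypothesis, so $\phi$ is $c$-concave by the very definition of $c$-concavity.

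For the ``only if'' direction, suppose $\phi$ is $c$-concave and write $\phi=\psi^{c_+}$ for some $\psi:Y\to\bar{\bb{R}}$. Then
\[
\phi^{c_+c_+}=(\psi^{c_+})^{c_+c_+}=\psi^{c_+c_+c_+}=\psi^{c_+}=\phi,
\]
where the middle equality is precisely the observation. Hence $\phi=\phi^{c_+c_+}$, as desired.

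The only place that demands any care is the observation itself, which is the standard Galois-connection argument and could be supplied in passing: both transforms $\psi\mapsto\psi^{c_+}$ and $\phi\mapsto\phi^{c_+}$ reverse the pointwise order, and for every function one has $\psi\leq\psi^{c_+c_+}$ and $\phi\leq\phi^{c_+c_+}$ (insert $y'=y$, resp.\ $x'=x$, into the infimum defining the transform and unwind). Applying an order-reversing transform to $\psi\leq\psi^{c_+c_+}$ gives $\psi^{c_+c_+c_+}\leq\psi^{c_+}$, while $\psi^{c_+}\leq(\psi^{c_+})^{c_+c_+}=\psi^{c_+c_+c_+}$ gives the opposite inequality; together these yield $\psi^{c_+c_+c_+}=\psi^{c_+}$. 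I expect no genuine obstacle here: the whole argument is a short formal manipulation once the infima are written out.
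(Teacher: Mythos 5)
Your proof is correct and follows essentially the same route as the paper: the paper deduces the lemma directly from the stated observation $\psi^{c_+}=\psi^{c_+c_+c_+}$, which is exactly what you do, and your supplementary order-reversal argument for that observation is the standard one and is fine.
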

            
        Next we define the following graph-like set:
        \begin{definition}[$c$-superdifferential]
        Let $\phi:X\to \bar{\bb{R}}$ be any function.
        The \textit{$c$-superdifferential} $\pt^{c_+}\phi\subset X\times Y$ is defined as
        \begin{equation*}
            \pt^{c_+}\phi =\left\{(x,y)\in X\times Y\mid \phi(x)+\phi^{c_+}(y)=c(x,y)\right\}.
        \end{equation*}
        The \textit{$c$-superdifferential of $\phi$ at $x$}, denoted by $\pt^{c_+}\phi(x)$, is the set of $y\in Y$ such that $(x,y)\in \pt^{c_+}\phi$.
            Similarly, we can define $c$-superdifferential of $\psi:Y\to \bar{\bb{R}}$.
        \end{definition}
        \begin{remark}
            Since the cost function $c$ takes value in $\bb{R}$, we have that $(x,y)\in \pt^{c_+}\phi$ only if both $\phi(x)$ and $\phi^{c_+}(y)$ are finite.
        \end{remark}
        \begin{remark}
            Since $\phi^{c_+}(y)\leq c(z,y)-\phi(z)$ for all $z\in X$, we have that $(x,y)\in \pt^{c_+}\phi$ if and only if the function $c(\cdot, y)-\phi$ on $X$ is minimized at $x$.
            We also remark that if $\phi$ is $c$-concave, then $\pt^{c_+}\phi=\pt^{c_+}\phi^{c_+}$.
        \end{remark}

        With these terminologies introduced, we are now ready to state the following classical theorem:
        \begin{theorem}[\cite{AG13}, Theorem 2.13; also see \cite{Rus96}, Theorem 2.2]\label{funda_OT}
        Assume that the cost function $c:X\times Y\to \bb{R}$ is continuous and bounded from below, and there are functions $a\in L^1(\mu)$, $b\in L^1(\nu)$ such that
        \begin{equation*}
            c(x,y)\leq a(x)+b(y).            
        \end{equation*}
        Let $\pi$ be a Borel probability measures on $X\times Y$ with $\pi\big |_X=\mu$ and $\pi\big |_Y=\nu$. 
        Then $\pi$ is optimal if and only if there exists a $c$-concave function $\phi$ on $X$ such that $\max \{\phi,0\}\in L^1(\mu)$ and $\supp\pi\subset \pt^{c_+}\phi$.
        \end{theorem}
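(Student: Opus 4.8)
The plan is to prove the two implications of the stated equivalence separately, with the ``only if'' direction being the substantial one.

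For sufficiency, suppose a $c$-concave $\phi$ as in the statement exists and put $\psi:=\phi^{c_+}$, so that $\phi(x)+\psi(y)\leq c(x,y)$ for all $(x,y)\in X\times Y$, with equality exactly on $\pt^{c_+}\phi$. Fixing any $x_0$ with $\phi(x_0)$ finite, the bound $c(x,y)\leq a(x)+b(y)$ gives $\psi(y)\leq c(x_0,y)-\phi(x_0)\leq b(y)+\big(a(x_0)-\phi(x_0)\big)$, so $\max\{\psi,0\}\in L^1(\nu)$; together with $\max\{\phi,0\}\in L^1(\mu)$ this makes $\int_X\phi\,d\mu$ and $\int_Y\psi\,d\nu$ well defined in $[-\infty,+\infty)$ and also gives $\int c\,d\pi<+\infty$. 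Then for any transport plan $\tilde\pi$ with marginals $\mu$ and $\nu$,
\begin{equation*}
\int c\,d\tilde\pi\ \geq\ \int\big(\phi(x)+\psi(y)\big)\,d\tilde\pi\ =\ \int_X\phi\,d\mu+\int_Y\psi\,d\nu\ =\ \int\big(\phi(x)+\psi(y)\big)\,d\pi\ =\ \int c\,d\pi ,
\end{equation*}
the last equality because $\pi$ is concentrated on $\supp\pi\subset\pt^{c_+}\phi$. Hence $\pi$ is optimal.

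For necessity, assume $\pi$ is optimal; I would argue in three steps. \textbf{Step 1 ($c$-cyclical monotonicity).} I would first show that $\Gamma:=\supp\pi$ is $c$-cyclically monotone, i.e.\ $\sum_{i=1}^{N}c(x_i,y_i)\leq\sum_{i=1}^{N}c(x_i,y_{\sigma(i)})$ for every finite family $(x_1,y_1),\dots,(x_N,y_N)\in\Gamma$ and every permutation $\sigma$. Arguing by contradiction, a discrete violation is promoted --- using continuity of $c$ --- to a violation on a product of small open sets of positive $\pi$-measure; disintegrating $\pi$ over these sets and reshuffling the corresponding mass produces a competitor $\pi'$ with the same marginals and $\int c\,d\pi'<\int c\,d\pi$, contradicting optimality. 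Here the hypothesis $c\leq a\oplus b$ (with $c$ bounded below) is used to guarantee $\int c\,d\pi\in\bb{R}$, so that the strict inequality is meaningful. \textbf{Step 2 (R\"uschendorf's construction of the potential).} Fix $(x_0,y_0)\in\Gamma$ and set
\begin{equation*}
\phi(x):=\inf\Big\{\big(c(x,y_N)-c(x_N,y_N)\big)+\sum_{i=0}^{N-1}\big(c(x_{i+1},y_i)-c(x_i,y_i)\big)\Big\},
\end{equation*}
the infimum being over all $N\geq1$ and all chains $(x_1,y_1),\dots,(x_N,y_N)\in\Gamma$. Grouping the chains according to their terminal point $y_N$ exhibits $\phi$ as $\psi^{c_+}$ for a suitable $\psi:Y\to\bar{\bb{R}}$, so $\phi$ is $c$-concave; applying Step 1 to the cycle $(x_0,y_0),(x_1,y_1),\dots,(x_N,y_N)$ shows $\phi(x_0)\geq0$, while the length-one chain $(x_0,y_0)$ shows $\phi(x_0)\leq0$, so $\phi$ is proper (finite at $x_0$). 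For $(\bar x,\bar y)\in\Gamma$, appending $(\bar x,\bar y)$ as a new terminal point of a near-optimal chain for $\phi(\bar x)$ gives $\phi(x)\leq\phi(\bar x)+c(x,\bar y)-c(\bar x,\bar y)$ for every $x$; since $\phi^{c_+}(\bar y)\leq c(\bar x,\bar y)-\phi(\bar x)$ always holds, this forces $\phi^{c_+}(\bar y)=c(\bar x,\bar y)-\phi(\bar x)$, i.e.\ $(\bar x,\bar y)\in\pt^{c_+}\phi$, so $\Gamma\subset\pt^{c_+}\phi$.

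\textbf{Step 3 (integrability).} The length-one chain $(x_0,y_0)$ gives $\phi(x)\leq c(x,y_0)-c(x_0,y_0)\leq a(x)+b(y_0)-c(x_0,y_0)$, whence $\max\{\phi,0\}\leq|a|+\big|b(y_0)-c(x_0,y_0)\big|\in L^1(\mu)$, using $a\in L^1(\mu)$ and that $\mu$ is a probability measure. I would dispatch the measurability of $\phi$ in the standard way: by separability of $X$ and $Y$ and continuity of $c$, the infimum defining $\phi$ may be taken over a countable set of chains, giving a Borel representative; alternatively, one replaces $\phi$ by its upper semicontinuous regularization $\phi^{c_+c_+}$, which is still $c$-concave and does not shrink $\pt^{c_+}\phi$. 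Putting Steps 1--3 together yields the desired potential.

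The step I expect to be the main obstacle is Step 1: converting a finite $c$-cyclical-monotonicity violation into an honest competing transport plan requires a careful measurable-selection/disintegration argument together with control of all the integrals involved, which is exactly where the domination $c\leq a\oplus b$ and the lower boundedness of $c$ are essential. Steps 2 and 3 are comparatively routine, modulo the standard measurability bookkeeping.
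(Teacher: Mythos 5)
The paper does not prove this statement at all—it is quoted as a classical result from \cite{AG13} (Theorem 2.13) and \cite{Rus96}—and your proposal reproduces essentially that standard proof: the dual-pairing argument for sufficiency, and for necessity the $c$-cyclical monotonicity of $\supp \pi$ followed by R\"uschendorf's construction of the potential, all of which is correct. The only quibbles are cosmetic: measurability of $\phi$ is automatic because it is an infimum of functions continuous in $x$ (hence upper semicontinuous), the regularization remark is vacuous since $\phi=\phi^{c_+c_+}$ already, and the competitor in Step 1 is usually assembled from products of the marginals of the normalized restrictions $\pi|_{U_i\times V_i}$, which avoids any measurable-selection subtlety; one should also pick $(x_0,y_0)\in\supp\pi$ with $a(x_0),b(y_0)$ finite, which is possible since such points form a full-measure subset.
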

        \begin{remark}
            Such a $c$-concave function $\phi$ is sometimes called a \textit{Kantorovich potential}.
        \end{remark}
        \par
        Now we specify the two spaces $X$ and $Y$ to be compact subsets of a complete Riemannian manifold $M$ and let $c(x,y)=\frac{1}{2}d^2_M(x,y)$ on $X\times Y$ be the cost function, where $d_M$ is the geodesic distance function on $M$.
        Under this setting, McCann in \cite{McC01} showed the existence of optimal transport maps, generalizing Brenier's theorem \cite{Bre91} in the Euclidean setting:
            \begin{theorem}[McCann's theorem, \cite{McC01}]\label{McCann_thm}
        Let $\mu$ and $\nu$ be Borel probability measures on a compact regular domain $X$ and a compact subset $Y$ in a complete Riemannian manifold $M$, respectively.
        Suppose $\mu$ is absolutely continuous with respect to the volume measure.
        Then there is an optimal transport map $F$ pushing $\mu$ forward to $\nu$.
        Furthermore, $F$ is given $\mu$-a.e.\@ by $F(x)=\exp_x(-\nabla \phi(x))$ for some $c$-concave function $\phi$ on $X$, and the geodesic $\gamma(t):=\exp_x(-t\nabla \phi(x))$ for $t\in [0,1]$ is minimizing.
    \end{theorem}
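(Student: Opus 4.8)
The plan is to follow McCann's classical argument: produce an optimal plan, extract a Kantorovich potential $\phi$, show $\phi$ is differentiable $\mu$-a.e., and read the transport map off the first-order optimality condition, using that the $x$-gradient of $\tfrac12 d_M^2(\cdot,y)$ is $-\exp_x^{-1}(y)$ away from the cut locus. First, since $c=\tfrac12 d_M^2$ is continuous and nonnegative on the compact set $X\times Y$, Theorem \ref{exist_OT_plan} yields an optimal transport plan $\pi$, and the hypotheses of Theorem \ref{funda_OT} hold (on a compact set $c$ is bounded, hence dominated by constants in $L^1(\mu)$ and $L^1(\nu)$), so there is a $c$-concave function $\phi$ on $X$ with $\max\{\phi,0\}\in L^1(\mu)$ and $\supp\pi\subset\pt^{c_+}\phi$. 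Writing $\phi=\psi^{c_+}$, so $\phi(x)=\inf_{y\in Y}\bigl(\tfrac12 d_M^2(x,y)-\psi(y)\bigr)$, we note $\phi\not\equiv+\infty$ since $\mu$ is a probability measure, and since $c$ is bounded on $X\times Y$ one may take $\phi$ real-valued on a neighborhood of $X$.

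Next I would prove $\phi$ is differentiable at $\mu$-a.e.\ $x$. The function $x\mapsto\tfrac12 d_M^2(x,y)$ is locally semiconcave on $M$ with a modulus of semiconcavity uniform in $y\in Y$: this is a standard Riemannian fact, $d_M^2$ being smooth off the diagonal and the cut locus and admitting uniform upper support paraboloids near the cut locus. An infimum of a family of uniformly locally semiconcave functions is locally semiconcave, so $\phi$ is locally semiconcave, hence locally Lipschitz, near $X$; by Rademacher's theorem it is differentiable Lebesgue-a.e., and since $\mu\ll\vol$ it is differentiable $\mu$-a.e.

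Now fix $x$ where $\phi$ is differentiable and let $y\in\pt^{c_+}\phi(x)$. Then $z\mapsto\tfrac12 d_M^2(z,y)-\phi(z)$ is minimized over $X$ at $x$ (the characterization of the $c$-superdifferential recalled in the Preliminaries), so $\nabla\phi(x)$ lies in the superdifferential of $\tfrac12 d_M^2(\cdot,y)$ at $x$; together with the semiconcavity of $\tfrac12 d_M^2(\cdot,y)$ this forces differentiability of $\tfrac12 d_M^2(\cdot,y)$ at $x$, whence $x\notin\cut(y)$, $\nabla_x\bigl(\tfrac12 d_M^2(\cdot,y)\bigr)=-\exp_x^{-1}(y)$, and $|\exp_x^{-1}(y)|=d_M(x,y)$. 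The minimality condition reads $-\exp_x^{-1}(y)=\nabla\phi(x)$, i.e.\ $y=\exp_x(-\nabla\phi(x))$; in particular $\pt^{c_+}\phi(x)$ is a single point and $\gamma(t)=\exp_x(-t\nabla\phi(x))$, $t\in[0,1]$, is a minimizing geodesic from $x$ to $y$. Setting $F(x):=\exp_x(-\nabla\phi(x))$ on the full-$\mu$-measure set of differentiability points, $\pi$ is concentrated on the graph of $F$; since $\pi|_X=\mu$ this gives $\pi=(\mathrm{id}_X\times F)_\sharp\mu$, hence $F_\sharp\mu=\pi|_Y=\nu$ and $F$ attains the optimal cost, so $F$ is an optimal transport map.

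I expect the crux to be the cut-locus bookkeeping in the last two steps: establishing the uniform local semiconcavity of $x\mapsto\tfrac12 d_M^2(x,y)$, and, at a differentiability point of $\phi$, upgrading the one-sided (superdifferential) information to genuine differentiability of $\tfrac12 d_M^2(\cdot,y)$ there, so that $x$ avoids $\cut(y)$ and the gradient equals $-\exp_x^{-1}(y)$. Once that is in hand, the identification of $F$ and the concentration-on-a-graph step are routine.
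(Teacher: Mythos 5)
Your proposal is correct and follows essentially the same route as the paper's sketch: existence of an optimal plan via Theorem \ref{exist_OT_plan}, a Kantorovich potential via Theorem \ref{funda_OT}, Lipschitz/semiconcavity plus Rademacher for $\mu$-a.e.\ differentiability of $\phi$, and then the identification of $\partial^{c_+}\phi(x)$ with the single point $\exp_x(-\nabla\phi(x))$ along a minimizing geodesic, which is exactly McCann's Lemma 7 that the paper cites. One small wording fix: since $\tfrac12 d_M^2(\cdot,y)-\phi$ is \emph{minimized} at $x$, differentiability of $\phi$ makes $\nabla\phi(x)$ a \emph{subgradient} (not supergradient) of $\tfrac12 d_M^2(\cdot,y)$ at $x$, and it is this subgradient combined with the supergradients coming from semiconcavity that forces the differentiability you then use to read off $y=\exp_x(-\nabla\phi(x))$.
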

    \begin{proof}[Sketch proof]
    Since the cost function $c$ is continuous and bounded, Theorem \ref{exist_OT_plan} gives the  existence of an optimal transport plan $\pi$, and by Theorem \ref{funda_OT} we have $\supp \pi \subset \partial^{c_+}\phi$ for some $c$-concave function $\phi$ on $X$.
    Next, Lemma 2 in \cite{McC01} showed that $\phi$ is Lipschitz, and hence by Rademacher's theorem (see e.g. Lemma 4 in \cite{McC01}) $\phi$ is differentiable a.e.\@ w.r.t.\@ the volume measure $\vol_M$.
    Since $\mu\ll \vol_M$, $\phi$ is thus differentiable $\mu$-a.e. 
    On the other hand, Lemma 7 in \cite{McC01} showed that $\pt^{c_+}\phi(x)$ contains only the point $\exp_x(-\nabla \phi(x))$ whenever $\phi$ is differentiable at $x$, and in this case the geodesic $\gamma$ is minimizing.
    It is now direct to check that the map $F(x)=\exp_x(-\nabla \phi(x))$ defined for $\mu$-a.e.\@ $x\in X$ is an optimal transport map.
    \end{proof}
    \begin{remark}
        \cite{McC01} also proved the uniqueness of the optimal transport map.
        In addition, the assumptions of Theorem \ref{McCann_thm} can be relaxed; see \cite{GM96}, \cite{Gig11}.
    \end{remark}

    \section{Optimal Transport from  measures on submanifolds}\label{Sec_OT_Submfd}
 
    In this section, we prove a generalization of McCann's theorem in a submanifold setting, which says that for a.e.\@ $x\in \Sigma$, the $c$-differential $\pt^{c_+}\phi(x)$ can roughly be identified as some specific part of the normal fiber $T^\perp_x \Sigma$.
    Together with the transport condition, we deduce a change of variable formula.
    The arguments here are inspired by \cite{Bre22}, \cite{McC01}, and the details involve results from \cite{CMS01}, \cite{Vil09}, F. Santambrogio's \cite{San15}, Ambrosio, N. Fusco, and D. Pallara's \cite{AFP00}.
    \par
    
    The setting is as follows.
    Let $M$ be a complete Riemannian manifold of dimension $n+m$. 
    Let $\Omega\subset M$ be a compact regular domain and $\Sigma\subset M$ a compact submanifold of dimension $n$, possibly with boundary.
    Let $\mu$ and $\nu$ be Borel probability measures on $\Sigma$ and $\Omega$, respectively, and we assume they are absolutely continuous w.r.t.\@ the corresponding volume measures.
    \par
    \textbf{Notation}: The operators and maps associated to $\Sigma$ will be subscripted or superscripted by $\Sigma$, e.g.,$\nabla^\Sigma, \exp^\Sigma, \Delta_\Sigma$.
    For $x\in \Sigma$ and $v\in T_x M$, we write $v=v^T+v^\perp$ as the orthogonal decomposition of $v$, where $v^T\in T_x\Sigma$ and $v^\perp \in T^\perp_x \Sigma$.
    We will often write $d$ instead of $d_M$ to denote the geodesic distance function on $M$, and $d_\zeta(\cdot)$ denotes the distance from some fixed point $\zeta\in M$.
    \par
    Consider the optimal transport problem between $\mu$ and $\nu$ with $c=\frac{1}{2}d^2$ on $\Sigma \times \Omega$ as the cost function.
    Since $c$ is continuous and bounded, Theorem \ref{exist_OT_plan} applies to give an optimal transport plan $\pi$ as a measure on $\Sigma \times \Omega$, and Theorem \ref{funda_OT} shows that $\supp \pi \subset \partial^{c_+}\phi$ for some $c$-concave function $\phi$ on $\Sigma$.
    By Lemma 2 in \cite{McC01} and the fact that $d_M$ is dominated by the intrinsic distance on $\Sigma$, $\phi$ is then Lipschitz on $\Sigma$.
    Thus by Rademacher's theorem (see e.g. \cite{McC01}, Lemma 4), $\phi$ is differentiable a.e.\@ 
    The following results provide the description we need about $\pt^{c_+}\phi(x)$ whenever $\phi$ is differentiable at $x$; they can be viewed as the submanifold version of Proposition 6 and Lemma 7 in \cite{McC01}.
    
    \begin{proposition}[Superdifferentiability of distance squared restricted to submanifolds]\label{supdiff_d^2_submfd}
        Let $x\in \Sigma\backslash \pt \Sigma, \zeta\in M$, and let $\sigma:[0,1]\to M$ be a minimizing geodesic in $M$ from $\zeta$ to $x$.
        Then the function $\frac{1}{2}d^2_{\zeta}\big |_\Sigma$ on $\Sigma$ has a supergradient $\dot{\sigma}(1)^T$ at $x$, in the sense that
    \begin{equation*}
        \frac{1}{2}d^2_{\zeta}\big |_\Sigma(\exp_x u)\leq \frac{1}{2}d^2_{\zeta}\big |_\Sigma(x)+\langle \dot{\sigma}(1)^T,u\rangle+o(|u|)        
    \end{equation*}
    for any $u\in T_x \Sigma$.
    \end{proposition}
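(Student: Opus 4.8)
The plan is to derive the estimate from the first variation of energy applied to competitor curves obtained by perturbing the fixed minimizing geodesic $\sigma$. Fix $u\in T_x\Sigma$; since $x\notin\pt\Sigma$ we may set $\beta(s):=\exp^\Sigma_x(su)$ for small $s$, so that $\beta(0)=x$ and $\dot\beta(0)=u$. Choose a smooth variation $\sigma_s$ of $\sigma$ with $\sigma_s(0)=\zeta$ for every $s$ and $\sigma_s(1)=\beta(s)$; for instance one may take $\sigma_s(t):=\exp^M_{\sigma(t)}\big(t\,P_t(\exp^M_x)^{-1}\beta(s)\big)$, where $P_t$ denotes parallel transport along $\sigma$ from $x=\sigma(1)$ to $\sigma(t)$. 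Then $\sigma_0=\sigma$, and the variation field $W(t):=\partial_s|_{s=0}\sigma_s(t)=t\,P_t u$ satisfies $W(0)=0$ and $W(1)=u$. The curves $\sigma_s$ with $s>0$ are used only as competitors joining $\zeta$ to $\beta(s)$; no minimality of them is needed.

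Next I would compute the first variation of the energy $E(\gamma):=\int_0^1|\dot\gamma|^2\,dt$. Since $\sigma$ is a geodesic the interior term vanishes, and since $W(0)=0$ the boundary term at $t=0$ vanishes as well, so
\[
\tfrac{d}{ds}\Big|_{s=0}E(\sigma_s)=2\langle\dot\sigma(1),W(1)\rangle=2\langle\dot\sigma(1),u\rangle=2\langle\dot\sigma(1)^T,u\rangle,
\]
the last equality because $u\in T_x\Sigma$ is orthogonal to $\dot\sigma(1)^\perp$. As $\sigma$ is minimizing, $E(\sigma_0)=d^2_\zeta(x)$, and $s\mapsto E(\sigma_s)$ is smooth, so $E(\sigma_s)=d^2_\zeta(x)+2s\langle\dot\sigma(1)^T,u\rangle+O(s^2)$. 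Since $\sigma_s$ runs from $\zeta$ to $\beta(s)$, the Cauchy--Schwarz (length--energy) inequality gives $d^2_\zeta(\beta(s))\le L(\sigma_s)^2\le E(\sigma_s)$, whence
\[
\tfrac12 d^2_\zeta\big(\exp^\Sigma_x(su)\big)\le\tfrac12 d^2_\zeta(x)+s\langle\dot\sigma(1)^T,u\rangle+O(s^2).
\]

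To upgrade this per-direction quadratic bound to the stated $o(|u|)$ superdifferential inequality, I would let $u$ range over the compact unit sphere of $T_x\Sigma$ and observe that the constant hidden in $O(s^2)$ --- which is governed by the second variation of energy, hence by the curvature of $M$ along the fixed compact curve $\sigma$ together with the smooth dependence of $\sigma_s$ on $u$ --- is bounded uniformly in $u$. Rescaling by $s=|v|$, $u=v/|v|$ then yields $\tfrac12 d^2_\zeta(\exp^\Sigma_x v)\le\tfrac12 d^2_\zeta(x)+\langle\dot\sigma(1)^T,v\rangle+o(|v|)$ as $v\to0$ in $T_x\Sigma$, which is the assertion. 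Alternatively, one can shortcut this by invoking the classical superdifferentiability (local semiconcavity) of $\tfrac12 d^2_\zeta$ on $M$, i.e.\ McCann's Proposition 6 in \cite{McC01}, together with the expansion $\exp^\Sigma_x v=\exp^M_x\big(v+\tfrac12\sff(v,v)+O(|v|^3)\big)$: the normal correction $\tfrac12\sff(v,v)$ pairs with $\dot\sigma(1)$ to give only an $O(|v|^2)$ term, and $\langle\dot\sigma(1),v\rangle=\langle\dot\sigma(1)^T,v\rangle$ since $v\in T_x\Sigma$.

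The only computations involved are the standard first and second variation formulas; the one point needing a word of care is the uniformity of the remainder over the directions $u$, but this is immediate from compactness of the unit sphere and continuity of the second variation in $u$. I do not expect a genuine obstacle here: the statement is the submanifold analogue of the classical fact that $\tfrac12 d^2_\zeta$ is locally semiconcave, and the submanifold structure enters only through the harmless orthogonal splitting $\langle\dot\sigma(1),u\rangle=\langle\dot\sigma(1)^T,u\rangle$.
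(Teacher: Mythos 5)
Your proof is correct, but it takes a genuinely different route from the paper. The paper follows McCann's Proposition 6 adapted to the restricted function: for $\zeta$ in a normal neighborhood of $x$ it linearizes $(\exp^M_\zeta)^{-1}\circ\exp^\Sigma_x$ and invokes Gauss' lemma to compute $\nabla^\Sigma\big(\tfrac12 d^2_\zeta\big|_\Sigma\big)(x)=\dot\sigma(1)^T$ directly; for general $\zeta$ it inserts an intermediate point $\eta$ on $\sigma$ inside that neighborhood, uses the triangle inequality to transfer the supergradient of $d_\eta\big|_\Sigma$ to $d_\zeta\big|_\Sigma$, and then applies McCann's one-sided chain rule to pass from $d_\zeta$ to $\tfrac12 d^2_\zeta$. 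You instead run a single variational argument: an explicit competitor family $\sigma_s$ joining $\zeta$ to $\exp^\Sigma_x(su)$, the first variation of energy (which sees only the boundary term $2\langle\dot\sigma(1),u\rangle=2\langle\dot\sigma(1)^T,u\rangle$), the length--energy inequality $d_\zeta^2(\beta(s))\le E(\sigma_s)$, and compactness of the unit sphere of $T_x\Sigma$ for uniformity of the Taylor remainder. Your approach avoids any cut-locus bookkeeping, the intermediate-point trick, and the one-sided chain rule, and it yields the slightly stronger uniform $O(|u|^2)$ (semiconcavity-type) error rather than just $o(|u|)$; the uniformity-in-direction step you flag is exactly the point that needs saying, and your compactness/smooth-dependence justification settles it (the curvature interpretation of the remainder is dispensable --- joint smoothness of $(s,u)\mapsto E(\sigma_s^u)$ suffices). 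The paper's route, by contrast, gets uniformity for free from smoothness of $d_\eta$ near $x$ and stays closer to the toolkit it reuses elsewhere (it is literally the submanifold version of \cite{McC01}, Proposition 6, which your second, shortcut variant --- McCann's result on $M$ plus the second-order comparison $\exp^\Sigma_x v=\exp^M_x\big(v+\tfrac12\sff(v,v)+O(|v|^3)\big)$ --- essentially reproduces).
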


    \begin{proof}
        The proof is similar to that of Proposition 6 in \cite{McC01}. 
        Choose $\epsilon>0$ and a neighborhood $V$ of $x$ in $M$ such that for any $\eta\in V$, $\exp^M_\eta$ maps $B_\epsilon(0)\subset T_\eta M$ diffeomorphically to some $U_\eta\supset V$.
        We first prove the case when $\zeta\in V$.
        The proof follows from the following computation: for $u\in T_x\Sigma$ with small norm,
        \begin{align*}
            \frac{1}{2}d^2_\zeta(\exp^\Sigma_x u)
            &=\frac{1}{2}d^2\left(\zeta, \exp^M_\zeta (\exp_\zeta^M)^{-1}\exp^\Sigma_x u\right)\\
            &=\frac{1}{2}\left|(\exp^M_\zeta)^{-1} \exp^\Sigma_x u\right|_\zeta^2\\
            &=\frac{1}{2}\left|\dot{\sigma}(0)+D_x((\exp^M_\zeta)^{-1})D_0(\exp^\Sigma_x)u+o(|u|)\right|_\zeta^2\quad \t{(linearization at $u=0$)}\\
            &=\frac{1}{2}|\dot{\sigma}(0)|^2_\zeta+\left\langle \dot{\sigma}(0), (D_{\dot{\sigma}(0)}\exp^M_\zeta)^{-1}u\right\rangle + o(|u|)\\
            &=\frac{1}{2}d_\zeta^2(x)+\langle \dot{\sigma}(1)^T, u\rangle + o(|u|),
        \end{align*}
        where we use Gauss' lemma in the last step.
        Thus we have $\nabla^\Sigma|_x (\frac{1}{2}d^2_\zeta\big|_\Sigma)=\dot{\sigma}(1)^T$.
        Applying the chain rule yields $\nabla^\Sigma|_x \left(d_\zeta\big |_\Sigma\right)=\frac{\dot{\sigma}(1)^T}{|\dot{\sigma}(1)|}$ (if $\zeta\neq x$).
        \par
        For $\zeta\not\in V$, choose a point $\eta$ on the geodesic $\sigma$ such that $\eta\in V\backslash\{x\}$.
        From the result above, we have $\nabla^\Sigma|_x \left(d_\eta\big|_\Sigma\right)=\frac{\dot{\sigma}(1)^T}{|\dot{\sigma}(1)|}$, and hence
        \begin{align*}
            d_\zeta(\exp^\Sigma_x u)&\leq d(\zeta,\eta)+d_\eta(\exp^\Sigma_x u)\\
            &=d(\zeta,\eta)+d_\eta(x)+\left\langle\frac{\dot{\sigma}(1)^T}{|\dot{\sigma}(1)|},u\right\rangle+ o(|u|)\\
            &=d_\zeta(x)+\left\langle\frac{\dot{\sigma}(1)^T}{|\dot{\sigma}(1)|},u\right\rangle+ o(|u|).
        \end{align*}
        Therefore $d_\zeta\big |_\Sigma$ has supergradient $\frac{\dot{\sigma}(1)^T}{|\dot{\sigma}(1)|}$ at $x$.
        Applying the one-sided chain rule (\cite{McC01}, Lemma 5) yields that $\frac{1}{2}d_\zeta^2\big|_\Sigma$ has supergradient $d_\zeta(x)\frac{\dot{\sigma}(1)^T}{|\dot{\sigma}(1)|}=\dot{\sigma}(1)^T$ at $x$.
    \end{proof}   
    \begin{lemma}[Tangency to submanifolds]\label{Tangency_submfd}
        Let $x\in \Sigma\backslash \pt \Sigma$ and $\zeta\in \pt^{c_+}\phi(x)$.
        If $\phi$ is differentiable at $x$, then $\zeta=\exp^M_x(-\nabla^\Sigma \phi(x)+v)$ for some $v\in T^\perp_x \Sigma$.
        In addition, the curve $\gamma(t):=\exp^M_x(-t\nabla^\Sigma \phi(x)+tv)$ for $t\in [0,1]$ is a minimizing geodesic from $x$ to $\zeta$.
    \end{lemma}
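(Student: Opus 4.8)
The plan is to deduce the conclusion from a first-order optimality condition at $x$, using Proposition~\ref{supdiff_d^2_submfd} to control the one-sided derivative of the cost and the hypothesis that $\phi$ is differentiable at $x$ to control $\phi$; nothing beyond these two ingredients should be needed.

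First I would unwind the hypothesis $\zeta\in\pt^{c_+}\phi(x)$: by the second Remark following the definition of the $c$-superdifferential, this is equivalent to saying that $g:=\tfrac12 d_\zeta^2\big|_\Sigma-\phi$ attains its minimum over $\Sigma$ at $x$. Since $M$ is complete and $\Sigma,\Omega$ are compact, Hopf--Rinow provides a minimizing geodesic $\sigma:[0,1]\to M$ from $\zeta$ to $x$, and Proposition~\ref{supdiff_d^2_submfd} (applicable because $x\notin\pt\Sigma$) shows that $\tfrac12 d_\zeta^2\big|_\Sigma$ has supergradient $\dot\sigma(1)^T$ at $x$. As $\phi$ is differentiable at $x$, the function $-\phi$ has supergradient $-\nabla^\Sigma\phi(x)$ there, so adding the two estimates, $g$ has supergradient $p:=\dot\sigma(1)^T-\nabla^\Sigma\phi(x)$ at $x$, i.e. $g(\exp^\Sigma_x u)\le g(x)+\langle p,u\rangle+o(|u|)$ for all $u\in T_x\Sigma$ of small norm.

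Next I would use that $x$ is an interior minimizer of $g$: for small $t>0$ and $u=-tp$, the displayed inequality together with $g(\exp^\Sigma_x u)\ge g(x)$ gives $-t|p|^2+o(t)\ge 0$, and dividing by $t$ and letting $t\to 0$ forces $p=0$, i.e. $\dot\sigma(1)^T=\nabla^\Sigma\phi(x)$ (this step is where $x\notin\pt\Sigma$ is essential, so that $-tp$ is an admissible direction). Then I would reverse the geodesic: set $\gamma(t):=\sigma(1-t)$, a minimizing geodesic from $x$ to $\zeta$ with $\dot\gamma(0)=-\dot\sigma(1)=-\dot\sigma(1)^T-\dot\sigma(1)^\perp=-\nabla^\Sigma\phi(x)+v$, where $v:=-\dot\sigma(1)^\perp\in T^\perp_x\Sigma$. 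Hence $\gamma(t)=\exp^M_x\bigl(t\dot\gamma(0)\bigr)=\exp^M_x(-t\nabla^\Sigma\phi(x)+tv)$ is a minimizing geodesic, and evaluating at $t=1$ gives $\zeta=\exp^M_x(-\nabla^\Sigma\phi(x)+v)$, as claimed. The degenerate case $\zeta=x$, where $\sigma$ is constant, is handled separately and trivially: then $\nabla^\Sigma\phi(x)=0$ and $v=0$ works.

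The main obstacle is really just Proposition~\ref{supdiff_d^2_submfd}, which is already proved; granting it, the remaining argument is a short first-order computation, and I expect no serious difficulty beyond careful bookkeeping of the tangential/normal decomposition and of the role played by $x$ being an interior point of $\Sigma$.
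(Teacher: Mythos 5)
Your proof is correct and follows essentially the same route as the paper: both use the characterization of $\zeta\in\pt^{c_+}\phi(x)$ as an interior minimum, Proposition~\ref{supdiff_d^2_submfd} for the supergradient $\dot\sigma(1)^T$, and the differentiability of $\phi$ to force $\dot\sigma(1)^T=\nabla^\Sigma\phi(x)$ before reversing the geodesic. Your "supergradient of $g$ vanishes at an interior minimizer" step is just a rephrasing of the paper's "subgradient meets supergradient" argument, so there is nothing substantive to add.
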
  
\begin{proof}
    Since $\zeta\in \pt^{c_+}\phi(x)$, 
    for any $u\in T_x\Sigma$ with small norm, we have
    \begin{align*}
        \frac{1}{2}d^2_\zeta(\exp^\Sigma_x u)&\geq\frac{1}{2}d_\zeta^2(x)-\phi(x)+\phi(\exp^\Sigma_x u)\\
        &=\frac{1}{2}d_\zeta^2(x)-\phi(x)+\phi(x)+\langle u, \nabla^\Sigma \phi(x)\rangle+o(|u|).
    \end{align*}
    That is to say, the function $\frac{1}{2}d^2_\zeta\big |_\Sigma$ has a subgradient $\nabla^\Sigma \phi(x)$ at $x$.
    On the other hand, let $\sigma:[0,1]\to M$ be a minimizing geodesic from $\zeta$ to $x$.
    By Proposition \ref{supdiff_d^2_submfd} above, $\dot{\sigma}(1)^\top$ is a supergradient of $\frac{1}{2}d^2_\zeta\big |_\Sigma$ at $x$.
    Thus $\frac{1}{2}d^2_\zeta\big |_\Sigma$ is actually differentiable at $x$ and $\dot{\sigma}(1)^\top=\nabla^\Sigma \phi(x)$.
    Therefore $\dot{\sigma}(1)=\nabla^\Sigma \phi(x)-v$ for some $v\in T^\perp_x\Sigma$.
    By reversing the direction of $\sigma$, we conclude
    \begin{equation*}
        \zeta=\exp^M_x(-\dot{\sigma}(1))=\exp^M_x(-\nabla^\Sigma \phi(x)+v).        
    \end{equation*}
\end{proof}
    
    Lemma \ref{Tangency_submfd} above suggests that we consider the following map: 
    \begin{equation}
    \Phi(x,v):=\exp^M_x(-\nabla^\Sigma \phi(x)+v)        
    \end{equation}
for $x\in \Sigma$ and $v\in T_x^\perp \Sigma$; this is well-defined for a.e.\@ $x\in \Sigma$ since $\phi$ is differentiable a.e., and the image of $\Phi$ then contains $\pt^{c_+}\phi(x)$ for such $x$.
We here remark that since $\Omega$ is compact, a maximizer argument as in Lemma 7 in \cite{McC01} shows that $\pt^{c_+}\phi(x)$ is never empty.
\par
Now we show that there is a measurable subset $A$ of $T^\perp\Sigma$ such that $\Phi$ becomes an injection when restricted to $A$; additionally, $A$ projects to a full $\mu$-subset in $\Sigma$, and $\Phi(A)$ has full $\nu$-measure in $\Omega$.
Our argument here involves the optimal transport problem from $(\Omega, \nu)$ to $(\Sigma, \mu)$ as in the classical McCann's theorem (Theorem\ref{McCann_thm}), which gives 
\begin{equation}\label{OT_map_nu_to_mu}
    F(\zeta):=\exp^M_\zeta (-\nabla^M \phi^{c_+}(\zeta))
\end{equation}
as an optimal transport map from $\nu$ to $\mu$ (recall that $\phi^{c_+}$ here is a function on $\Omega$).
We also need a technical regularity result that $c$-concave functions are semiconcave, in the sense that locally they differ from being geodesically concave by some smooth functions.
We refer to Definition 3.8 in \cite{CMS01} for the precise definition; the proof of this fact can be found in Proposition 3.14 in \cite{CMS01} and Proposition \ref{submfd_c_concave_semiconcave}.
Hence by Alexandrov-Bangert theorem (Theorem 3.10 in \cite{CMS01}; also see Theorem 14.1 in \cite{Vil09}), the \textit{Alexandrov Hessians}   $\Hess^\Sigma\phi$ and $\Hess \phi^{c_+}$ exist a.e., and they define measurable sections of bilinear forms on the tangent bundles.

\begin{theorem}[Generalized McCann's theorem]\label{gen_McCann_thm}
There is a measurable subset $A\subset T^{\perp}\Sigma$ such that 
\begin{enumerate}
    \item $\Phi$ is well-defined and injective on $A$;
    \item  $p_{T^\perp \Sigma}(A)$ is measurable and $\mu(p_{T^\perp \Sigma}(A))=1$;
    \item $\Phi(A)$ is measurable and $\nu(\Phi(A))=1$.
\end{enumerate}
Here, $p_{T^\perp \Sigma}:T^\perp \Sigma \to \Sigma$ is the bundle projection map.
\end{theorem}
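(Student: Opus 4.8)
The plan is to construct $A$ not directly as a subset of $T^{\perp}\Sigma$, but as the image of a full-$\nu$-measure subset of $\Omega$ under a measurable section of $\Phi$ obtained by running McCann's theorem in the reverse direction. Once such a section is in hand, the injectivity in (1) becomes essentially automatic, and the measure statements (2) and (3) follow from the push-forward property of the reverse transport map.

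First I would set up the dual transport problem. Since the cost $c=\frac{1}{2}d_M^2$ is symmetric, transposing an optimal plan $\pi$ of $(\mu,\nu)$ gives an optimal plan $\check\pi$ of $(\nu,\mu)$ on $\Omega\times\Sigma$; as $\supp\pi\subset\pt^{c_+}\phi$ and $\phi$ is $c$-concave, we get $\supp\check\pi\subset\pt^{c_+}\phi^{c_+}$, and because $(\phi^{c_+})^{c_+}=\phi$ the condition $(\zeta,x)\in\pt^{c_+}\phi^{c_+}$ is literally the same as $(x,\zeta)\in\pt^{c_+}\phi$. Thus $\phi^{c_+}$ is a Kantorovich potential on the regular domain $\Omega$; it is Lipschitz there (Lemma 2 of \cite{McC01}), hence differentiable $\nu$-a.e.\ by Rademacher, and the argument of Theorem \ref{McCann_thm} applied to $\check\pi$ yields the map $F$ of \eqref{OT_map_nu_to_mu} with $F_\sharp\nu=\mu$. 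Moreover, for $\nu$-a.e.\ $\zeta$, the curve $\sigma_\zeta(t):=\exp^M_\zeta(-t\nabla^M\phi^{c_+}(\zeta))$, $t\in[0,1]$, is a minimizing geodesic from $\zeta$ to $F(\zeta)$, and $\zeta\in\pt^{c_+}\phi(F(\zeta))$.

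Next I would fix a Borel set $\Omega_0\subset\interior\Omega$ of full $\nu$-measure on which, simultaneously, $\phi^{c_+}$ is differentiable at $\zeta$, the geodesic $\sigma_\zeta$ is minimizing with $\zeta\in\pt^{c_+}\phi(F(\zeta))$, and $x:=F(\zeta)$ lies in $\Sigma\setminus\pt\Sigma$ with $\phi$ differentiable at $x$ (this uses $\mu(\pt\Sigma)=0$, differentiability of $\phi$ $\mu$-a.e., and $F_\sharp\nu=\mu$). For $\zeta\in\Omega_0$ I set $w_\zeta:=-\dot\sigma_\zeta(1)^{\perp}\in T^\perp_x\Sigma$ and check, by running the last computation in the proof of Lemma \ref{Tangency_submfd} with $\sigma=\sigma_\zeta$, that $\Phi(x,w_\zeta)=\zeta$: indeed $\dot\sigma_\zeta(1)^{\top}$ is a supergradient of $\frac{1}{2}d^2_\zeta\big|_\Sigma$ at $x$ by Proposition \ref{supdiff_d^2_submfd}, while $\nabla^\Sigma\phi(x)$ is a subgradient there because $\zeta\in\pt^{c_+}\phi(x)$ and $\phi$ is differentiable at $x$, so $\dot\sigma_\zeta(1)^{\top}=\nabla^\Sigma\phi(x)$, hence $-\dot\sigma_\zeta(1)=-\nabla^\Sigma\phi(x)+w_\zeta$, and reversing $\sigma_\zeta$ gives $\Phi(x,w_\zeta)=\exp^M_x(-\dot\sigma_\zeta(1))=\zeta$. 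I then define $A:=\{(F(\zeta),w_\zeta):\zeta\in\Omega_0\}$. Since $\Phi(F(\zeta),w_\zeta)=\zeta$, the map $\zeta\mapsto(F(\zeta),w_\zeta)$ is a bijection of $\Omega_0$ onto $A$ with inverse $\Phi|_A$; this gives (1) and also $\Phi(A)=\Omega_0$, so $\nu(\Phi(A))=1$, which is (3). For (2), $p_{T^\perp\Sigma}(A)=F(\Omega_0)$, and since $\Omega_0\subset F^{-1}(F(\Omega_0))$ and $F_\sharp\nu=\mu$ we get $\mu(F(\Omega_0))=\nu(F^{-1}(F(\Omega_0)))\ge\nu(\Omega_0)=1$. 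Finally, to make $A$, $p_{T^\perp\Sigma}(A)$ and $\Phi(A)$ genuinely Borel rather than merely of full measure, I would shrink $\Omega_0$ to a $\sigma$-compact subset of full $\nu$-measure on which $F$ and $\nabla^M\phi^{c_+}$ are continuous (Lusin's theorem); then $\zeta\mapsto(F(\zeta),w_\zeta)$ is continuous and injective, so its image $A$, together with $p_{T^\perp\Sigma}(A)=F(\Omega_0)$ and $\Phi(A)=\Omega_0$, is $\sigma$-compact.

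The main obstacle is conceptual rather than computational: recognizing that the auxiliary transport problem from $\Omega$ back to $\Sigma$ furnishes a measurable section of $\Phi$, which trivializes the injectivity requirement in (1). The remaining delicate points are the bookkeeping that matches $\pt^{c_+}\phi$ with $\pt^{c_+}\phi^{c_+}$ under transposition --- needed so that $F(\zeta)$ really is a point where Lemma \ref{Tangency_submfd} applies --- and the descriptive-set-theoretic cleanup that upgrades ``full $\nu$-measure'' to honest Borel measurability of the three sets in the statement.
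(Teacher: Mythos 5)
Your proposal is correct and follows essentially the same route as the paper: there, too, McCann's theorem is run in the reverse direction to get $F(\zeta)=\exp^M_\zeta(-\nabla^M\phi^{c_+}(\zeta))$ with $F_\sharp\nu=\mu$, a section $\Theta(\zeta)=\bigl(F(\zeta),-\nabla(\tfrac{1}{2}d_\zeta^2)^\perp_{F(\zeta)}\bigr)$ (your $\zeta\mapsto(F(\zeta),w_\zeta)$) is shown to satisfy $\Phi\circ\Theta=\mathrm{id}$ via Lemma \ref{Tangency_submfd}, and $A$ is taken to be its image over a full-measure set of good points. The only differences are bookkeeping: the paper works on the set where the Alexandrov Hessians of $\phi$ and $\phi^{c_+}$ exist (using the cut-locus fact from \cite{CMS01} to define $\Theta$ via $\nabla\tfrac{1}{2}d_\zeta^2$) and settles measurability by the characterization $A=U\cap V\cap\Phi^{-1}(F^{-1}(K)\cap L)$ together with analytic-set theory for $F(L)$, whereas you need only differentiability and a Lusin/$\sigma$-compact exhaustion, which serves the same purpose.
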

\begin{proof}
By Alexandrov-Bangert theorem and that $\mu,\nu$ are absolute continuous, we can choose Borel subsets $K\subset \Sigma$ and $L\subset \Omega$ such that $\Hess^\Sigma \phi$ exists on $K$, $\Hess \phi^{c_+}$ exists on $L$, and $\mu(K)=1=\nu(L)$.
Then we have $\nu(F^{-1}(K))=\mu(K)=1$ from the transport condition.
On the other hand, from McCann's theorem (Theorem \ref{McCann_thm}) we can write $F(L)=p_{\Sigma\times \Omega\to \Sigma}(\pt^{c_+}\phi^{c_+}\cap (\Sigma\times L))$,
where $p_{\Sigma\times \Omega\to \Sigma}$ is the canonical projection map.
Hence  by the theory of analytic sets (see e.g. Proposition 8.4.4 in \cite{Coh13}), $F(L)$ is measurable.
We can now use the transport condition (after extending to Lebesgue measurable subsets) to obtain
\begin{equation*}
    \mu(\Sigma \backslash F(L))=\nu(F^{-1}(\Sigma \backslash F(L)))\leq \nu(\Omega\backslash L)=0,
\end{equation*}
and thus $\mu(F(L))=1$.
We hereby conclude that $\mu(K\cap F(L))=1=\nu(F^{-1}(K)\cap L)$.
\par
Fix $\zeta\in F^{-1}(K)\cap L$.
Since $\Hess \phi^{c_+}(\zeta)$ exists, by Proposition 4.1 in \cite{CMS01} $F(\zeta)$ is not in the cut locus of $\zeta$.
Thus there is a unique minimizing geodesic $\sigma:[0,1]\to M$ from $\zeta$ to $F(\zeta)$, and $\dot{\sigma}(1)=\nabla(\frac{1}{2}d_\zeta^2)_{F(\zeta)}$.
This shows that the following map:
\begin{equation}\label{map_Theta}
    \Theta(\zeta):=\left(F(\zeta), -\nabla\left(\frac{1}{2}d_\zeta^2\right)_{F(\zeta)}^\perp\right)    
\end{equation}
is well-defined on $F^{-1}(K)\cap L$.
On the other hand, by Lemma 7 in \cite{McC01}, we have $F(\zeta)\in \pt^{c_+}\phi^{c_+}(\zeta)$, which is equivalent to $\zeta\in \pt^{c_+}\phi(F(\zeta))$.
Since $F(\zeta)\in K$, $\phi$ is differentiable at $F(\zeta)$.
Thus by Lemma \ref{Tangency_submfd} we have $\dot{\sigma}(1)^\top=\nabla^\Sigma \phi(F(\zeta))$.
In summary, we have
\begin{equation*}
    \Phi\circ \Theta(\zeta)=\exp_{F(\zeta)}\left(-\nabla^\Sigma \phi(F(\zeta))-\nabla\left(\frac{1}{2}d_\zeta^2\right)_{F(\zeta)}^\perp\right)
    =\exp_{F(\zeta)}(-\dot{\sigma}(1))=\zeta.
\end{equation*}
That is, $\Phi\circ\Theta$ is the identity map on $F^{-1}(K)\cap L$.
We now define
\begin{equation}
     A:=\Theta(F^{-1}(K)\cap L).
\end{equation}
Clearly $\Phi$ is then well-defined and injective on $A$, and both $p_{T^\perp \Sigma}(A)=F(F^{-1}(K)\cap L)=K\cap F(L)$ and $\Phi(A)=F^{-1}(K)\cap L$ are measurable with full measures.
\par
To show that $A$ is measurable, we use the following characterization of $A$.
Consider the following subsets:
\begin{align*}
    U&:=\{(x,v)\in T^\perp \Sigma|_K \mid \gamma(t):=\exp^M_x(-t\nabla^\Sigma \phi(x)+tv)\, \t{is minimizing for $t\in[0,1]$}\};\\
    V&:=\{(x,v)\in T^\perp \Sigma|_K\mid \Phi(x,v)\in \pt^{c_+}\phi(x)\}.
\end{align*}
Clearly $U$ and $V$ are measurable, and it is direct to check that $    U\cap V\cap \Phi^{-1}(F^{-1}(K)\cap L)=A$.
\end{proof}
\begin{remark}
    In the above proof of Theorem $\ref{gen_McCann_thm}$, it suffices to assume that $\phi$ is merely differentiable on $K$ instead of the existence of $\Hess^\Sigma \phi$.
    However, the existence of the Hessian is needed for the proofs of Michael-Simon-Sobolev inequalities in Section 
    \ref{Sec_Michael_Simon}.
\end{remark}
Theorem \ref{gen_McCann_thm} above has the following corollary:
\begin{corollary}[$c$-superdifferential and normal fiber]\label{superdiff_normalfib}
Define $A_x:=A\cap T^\perp_x \Sigma$ for $x\in p_{T^\perp \Sigma}(A)$.
Then the restricted map
\begin{equation*}
\Phi(x,\cdot):A_x\to \pt^{c_+}\phi(x)\cap \Phi(A)    
\end{equation*}
is a bijection.
\end{corollary}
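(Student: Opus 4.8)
The plan is to extract everything from the construction carried out in the proof of Theorem~\ref{gen_McCann_thm}. Recall from there the Borel sets $K\subset\Sigma$ and $L\subset\Omega$ on which $\Hess^\Sigma\phi$ and $\Hess\phi^{c_+}$ respectively exist, the optimal map $F$ of \eqref{OT_map_nu_to_mu} pushing $\nu$ to $\mu$, the map $\Theta$ of \eqref{map_Theta} defined on $F^{-1}(K)\cap L$, and the identities $A=\Theta(F^{-1}(K)\cap L)$, $\Phi\circ\Theta=\t{id}$ on $F^{-1}(K)\cap L$, $\Phi(A)=F^{-1}(K)\cap L$, $p_{T^\perp\Sigma}(A)=K\cap F(L)$, together with the inclusion $A\subset V=\{(y,w)\in T^\perp\Sigma|_K\mid \Phi(y,w)\in\pt^{c_+}\phi(y)\}$. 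Fix $x\in p_{T^\perp\Sigma}(A)$; then $x\in K$, so $\phi$ is differentiable at $x$ and $\Phi(x,\cdot)$ makes sense on $A_x$. I would split the bijection claim into three steps: (i) $\Phi(x,\cdot)$ maps $A_x$ into $\pt^{c_+}\phi(x)\cap\Phi(A)$; (ii) $\Phi(x,\cdot)$ is injective on $A_x$; (iii) $\Phi(x,\cdot)$ is onto $\pt^{c_+}\phi(x)\cap\Phi(A)$.

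Steps (i) and (ii) require essentially no work. For (i): if $(x,v)\in A_x\subset A$ then $\Phi(x,v)\in\Phi(A)$ by definition, and the inclusion $A\subset V$ forces $\Phi(x,v)\in\pt^{c_+}\phi(x)$. For (ii): $\Phi$ is injective on all of $A$ by part~(1) of Theorem~\ref{gen_McCann_thm}, hence in particular on the subset $A_x$.

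The substance is step (iii). Given $\zeta\in\pt^{c_+}\phi(x)\cap\Phi(A)$, the hypothesis $\zeta\in\Phi(A)=F^{-1}(K)\cap L$ lets us form $\Theta(\zeta)\in A$, and $\Phi(\Theta(\zeta))=\zeta$ by the identity above; writing $\Theta(\zeta)=(F(\zeta),w)$ with $w\in T^\perp_{F(\zeta)}\Sigma$, it remains only to verify that $F(\zeta)=x$, for then $\Theta(\zeta)\in A_x$ and $\Phi(x,w)=\zeta$. Here I would argue exactly as in the proof of Theorem~\ref{gen_McCann_thm}: since $\zeta\in L$, $\phi^{c_+}$ is differentiable at $\zeta$, so by Lemma~7 in \cite{McC01} the superdifferential $\pt^{c_+}\phi^{c_+}(\zeta)$ consists of the single point $F(\zeta)=\exp^M_\zeta(-\nabla^M\phi^{c_+}(\zeta))$; on the other hand $\zeta\in\pt^{c_+}\phi(x)$ is equivalent to $x\in\pt^{c_+}\phi^{c_+}(\zeta)$ (using the $c$-concavity of $\phi$), whence $x=F(\zeta)$.

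I do not anticipate a genuine obstacle: all of the analytic and measure-theoretic content has already been absorbed into Theorem~\ref{gen_McCann_thm}, and the corollary is a matter of bookkeeping. The only point requiring care is ensuring that the differentiability hypotheses are in force, namely $x\in K$ (hence $\phi$ differentiable at $x$) and $\zeta\in L$ (hence $\phi^{c_+}$ differentiable at $\zeta$), both of which follow from the inclusions $p_{T^\perp\Sigma}(A)\subset K$ and $\Phi(A)\subset L$, and then invoking single-valuedness of $c$-superdifferentials at such points. Alternatively, one can organize the whole argument around the fact that $\Theta$ is the two-sided inverse of $\Phi$ on $A$ and that, for $\zeta\in\Phi(A)$, the condition $F(\zeta)=x$ is precisely $\zeta\in\pt^{c_+}\phi(x)$, so that $\Phi(A_x)=\pt^{c_+}\phi(x)\cap\Phi(A)$ and injectivity is inherited from $\Phi|_A$.
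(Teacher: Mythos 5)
Your proposal is correct and follows exactly the route the paper intends: the corollary is stated without proof as an immediate consequence of Theorem~\ref{gen_McCann_thm}, and your bookkeeping (injectivity from $\Phi|_A$, the inclusion $A\subset V$ for one direction, and $F(\zeta)=x$ via single-valuedness of $\pt^{c_+}\phi^{c_+}(\zeta)$ on $L$ together with the duality $\zeta\in\pt^{c_+}\phi(x)\Leftrightarrow x\in\pt^{c_+}\phi^{c_+}(\zeta)$ for surjectivity) uses precisely the ingredients already established in that theorem's proof. No gaps.
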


With the results above, we now deduce a change of variable formula from the transport condition:

    \begin{theorem}\label{change_of_var}
    Let $A$ be the subset in Theorem \ref{gen_McCann_thm}.
    Suppose $f$ and $g$ are the density functions of $\mu$ and $\nu$ w.r.t.\@ the volume measures of $\Sigma$ and $M$.
    Then for $\mu$-a.e.\@ $x\in \Sigma$,
    \begin{equation*}
    \int_{A_x}g\circ \Phi(x,v)|\det D\Phi(x,v)| dv=f(x).        
    \end{equation*}
    \end{theorem}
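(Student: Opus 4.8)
The plan is to transport the desired identity back through the bijection $\Phi\colon A\to\Phi(A)$ produced in Theorem \ref{gen_McCann_thm} by means of the area formula, and then to disintegrate the resulting equality of measures over the fibres of the bundle projection $p:=p_{T^\perp\Sigma}$. I first recall the structural facts established inside the proof of Theorem \ref{gen_McCann_thm}: the map $\Theta$ of \eqref{map_Theta} satisfies $\Phi\circ\Theta=\operatorname{id}$ on $\Phi(A)=F^{-1}(K)\cap L$, and since $A=\Theta(\Phi(A))$ and $\Phi$ is injective on $A$, also $\Theta\circ\Phi=\operatorname{id}$ on $A$; moreover $p\circ\Theta=F$, so that $F\circ\Phi(x,v)=x$ for all $(x,v)\in A$, where $F$ is the optimal transport map of \eqref{OT_map_nu_to_mu} with $F_\sharp\nu=\mu$. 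Let $\vol_{T^\perp\Sigma}$ denote the natural measure on the total space of the normal bundle which, over a trivialising chart, equals $d\vol_\Sigma(x)\,dv$ with $dv$ the Lebesgue measure on $T^\perp_x\Sigma$, and define a finite Borel measure on $A$ by $d\lambda:=(g\circ\Phi)\,|\det D\Phi|\,d\vol_{T^\perp\Sigma}$; here $\det D\Phi$ is a measurable function on $A$ since it is built from the measurable section $\Hess^\Sigma\phi$ and the derivatives of $\exp^M$.

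The key step is the area formula for the injective map $\Phi\colon A\to\Phi(A)$, which gives $\Phi_\sharp\lambda=g\,d\vol_M$; the right-hand side equals $\nu$ because $\nu$ has density $g$ and $\nu(\Phi(A))=1$ by Theorem \ref{gen_McCann_thm}(3). Pushing forward once more, $p_\sharp\lambda=(F\circ\Phi)_\sharp\lambda=F_\sharp\nu=\mu$. On the other hand, Fubini's theorem on $T^\perp\Sigma$ applied to the explicit density of $\lambda$ shows that for every Borel $E\subset\Sigma$
\[
  (p_\sharp\lambda)(E)=\lambda\big(\{(x,v)\in A: x\in E\}\big)=\int_E\Big(\int_{A_x} g(\Phi(x,v))\,|\det D\Phi(x,v)|\,dv\Big)\,d\vol_\Sigma(x),
\]
so $p_\sharp\lambda$ has density $x\mapsto\int_{A_x} g(\Phi(x,v))|\det D\Phi(x,v)|\,dv$ with respect to $\vol_\Sigma$. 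Comparing with $d\mu=f\,d\vol_\Sigma$ and letting $E$ range over all Borel subsets of $\Sigma$ forces this density to agree with $f$ for $\vol_\Sigma$-a.e.\ $x$, hence for $\mu$-a.e.\ $x$, which is the assertion.

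The main obstacle is to justify the area formula, since $\Phi$ is not $C^1$: it is assembled from $\nabla^\Sigma\phi$, which is merely the a.e.-derivative of the Lipschitz, semiconcave potential $\phi$. Here I would argue as in \cite{CMS01}: by the Alexandrov--Bangert theorem recalled above, $\phi$ has a second-order Taylor expansion at $\vol_\Sigma$-a.e.\ point and $\Hess^\Sigma\phi$ is a measurable section, so a Lusin-type (Whitney extension) argument furnishes an increasing sequence of compact sets $K_j\subset K$ with $\vol_\Sigma(\Sigma\setminus\bigcup_j K_j)=0$ on which $\nabla^\Sigma\phi$ is Lipschitz; then $\Phi$ restricted to $A\cap T^\perp\Sigma|_{K_j}$ is Lipschitz with a.e.\ differential $D\Phi$ expressed through $\Hess^\Sigma\phi$ and the derivatives of $\exp^M$. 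The classical area formula for Lipschitz maps between equidimensional Riemannian manifolds applies on each such piece --- injectivity of $\Phi$ on $A$ removes any multiplicity factor, and the set where $\det D\Phi=0$ contributes nothing to either side --- and monotone convergence as $j\to\infty$, using $\nu(\Phi(A))=1$ to identify the limit, yields $\Phi_\sharp\lambda=\nu$. The remaining points (measurability of $\Phi(A)$, of each $A_x$, and of the fibrewise integral; validity of Fubini, which holds since $\lambda$ is finite) are routine given Theorem \ref{gen_McCann_thm} and Corollary \ref{superdiff_normalfib}.
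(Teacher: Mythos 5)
Your proposal is correct, and its analytic core coincides with the paper's: both proofs rest on the countable Lipschitzness of $\nabla^\Sigma\phi$ (semiconcavity via Proposition \ref{submfd_c_concave_semiconcave} plus Alexandrov--Bangert, as in \cite{AFP00}, \cite{San15}), the area formula for the injective, countably Lipschitz map $\Phi$ on $A$ together with $\nu(\Phi(A))=1$, and the identity $F\circ\Phi=p_{T^\perp\Sigma}$ on $A$ coming from the construction of $\Theta$ in Theorem \ref{gen_McCann_thm}. Where you genuinely diverge is in how the ``$\mu$-side'' is produced. The paper disintegrates the optimal plan $\pi$ over $\Sigma$ (via \cite{AGS05}), rewrites $\int_\Omega h\,d\nu$ through the fibrewise probability measures $\lambda_x=\Theta_\sharp\pi_x$ using Corollary \ref{superdiff_normalfib}, and then tests with $h=a\circ F$ so that the inner integral collapses because $\lambda_x$ has unit mass. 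You never touch $\pi$ or the disintegration theorem: you package everything into the single measure $\lambda=(g\circ\Phi)\,|\det D\Phi|\,\vol_{T^\perp\Sigma}\llcorner A$, obtain $\Phi_\sharp\lambda=\nu$ from the area formula, deduce $p_\sharp\lambda=(F\circ\Phi)_\sharp\lambda=F_\sharp\nu=\mu$, and read off the fibrewise density by Tonelli. This is a clean streamlining; its only extra input is the explicit use of $F_\sharp\nu=\mu$ from the reverse McCann problem, which the proof of Theorem \ref{gen_McCann_thm} already supplies and which the paper itself uses implicitly, since plugging $h=a\circ F$ into $\int_\Omega h\,d\nu$ is nothing but integrating $a$ against $F_\sharp\nu$. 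Conversely, the paper's disintegration formulation has the expository advantage of exhibiting the conditional measures on the fibres $A_x$, which matches the heuristic of spreading the point mass at $x$ over the normal directions. One small point to keep at the paper's level of care: $D\Phi$ in the statement must be understood as the approximate differential of the countably Lipschitz map $\Phi$, which agrees a.e.\ with the formal differential built from $\Hess^\Sigma\phi$ on the Lusin pieces; your Whitney-extension remark covers exactly this, so there is no gap.
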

\begin{proof}
First of all, by the disintegration theorem (see e.g. \cite{AGS05}, Theorem 5.3.1) we have a family of probability measures $(\pi_x)$ on $\Omega$ parametrized by $\mu$-a.e.\@ $x\in \Sigma$ such that
\begin{equation*}
    \int_{\Sigma \times \Omega} h d\pi=\int_\Sigma \int_\Omega h(x,\zeta) d\pi_x(\zeta)d\mu(x)    
\end{equation*}
for any nonnegative measurable function $h$ on $\Sigma \times \Omega$.
Next, we already know that $\supp \pi \subset\pt^{c_+} \phi$.
Furthermore by Theorem  \ref{gen_McCann_thm}, $\Phi(A)$ has full measure in $\Omega$, and by Corollary \ref{superdiff_normalfib}we have the bijection via $\Phi$ between $A_x$ and $\pt^{c_+}\phi(x)\cap \Phi(A)$ for $\mu$-a.e.\@ $x$ .
Thus if $h$ depends only on $\zeta\in\Omega$, then we can write
\begin{equation}\label{change_of_var_pf_LHS}
    \begin{split}
    \int_{\Omega} h d\nu
    &=\int_\Sigma \int_{\pt^{c_+} \phi(x) \cap \Phi(A)} h(\zeta) d\pi_x(\zeta)d\mu(x)\\
    &=\int_\Sigma f(x)\int_{A_x} h\circ \Phi(x,v) d\lambda_x(v)d\vol_\Sigma(x),
    \end{split}
\end{equation}
    where $\lambda_x$ is the push-forward probability measure of $\pi_x$ by the map $\Theta$ in Theorem \ref{gen_McCann_thm}.
    \par
    On the other hand, since $\phi$ is semi-convex, we know that $\nabla^\Sigma \phi$ is countably Lipschitz (see Theorem 5.34 in \cite{AFP00}, or Sec 1.7.6. in \cite{San15}), and hence so is $\Phi$.
    Thus from the area formula of the map $\Phi$ and that $\Phi(A), p_{T^\perp \Sigma}(A)$ have full measures, we get
    \begin{equation}\label{change_of_var_pf_RHS}
        \begin{split}
        \int_\Omega hd\nu
        &=\int_\Omega hgd\vol_M\\
        &=\int_\Sigma \int_{A_x} h\circ \Phi(x,v) g\circ \Phi(x,v) |\det D\Phi(x,v)|dv d\vol_\Sigma (x).
        \end{split}
    \end{equation}
    \par
    Now let $a(x)$ be any nonnegative measurable function on $\Sigma$, and we plug $h=a\circ F$ into equalities (\ref{change_of_var_pf_LHS}) and (\ref{change_of_var_pf_RHS}) above.
    Then since $h\circ \Phi(x,v)=a(x)$ from Theorem \ref{gen_McCann_thm}, we have
    \begin{equation*}
         \int_\Sigma f(x)a(x)\int_{A_x}  d\lambda_x(v)d\vol_\Sigma(x)
         =\int_\Sigma a(x)\int_{A_x}  g\circ \Phi(x,v) |\det D\Phi(x,v)|dv d\vol_\Sigma (x).        
    \end{equation*}
    The theorem now follows since $a$ is arbitrary and $\lambda_x$ is a probability measure.
\end{proof}

\section{Proofs of Michael-Simon-Sobolev inequalities}\label{Sec_Michael_Simon}
In this section, we apply our generalization of McCann's theorem to prove Michael-Simon-Sobolev inequalities for compact submanifolds in complete manifolds with intermediate Ricci curvature bounded from below.
We here recall the following definitions from \cite{KM18}:
\begin{definition}[$p$-Ricci Curvature]
For a $p$-dimensional plane $P$ in $T_{x}M$ and a vector $w\in T_{x}M$, we define the \textit{$p$-Ricci curvature} of $P$ in the direction of $w$ as
\begin{equation*}
\Ric_p(P,w):=\tr\left[\top_{P}\circ(R(w,\cdot)w \big)|_P\right],    
\end{equation*}
where $\top_P:T_{x}M\rightarrow P$ is the orthogonal projection of $T_{x}M$ onto $P$, and $R$ is the curvature tensor of $M$.
\end{definition}
Notice that if we choose $e_1,\dots,e_p$ as an orthonormal basis of $P$, then we have
\begin{equation*}
    \Ric_p(P,w)=\sum_{i=1}^p \langle R(w,e_i)w, e_i\rangle.    
\end{equation*}

\begin{definition}[$p$-Ricci lower bounds]
We say that $M$ has \textit{$p$-Ricci curvature bounded from below by $K$} if for any $x \in M$, any $w\in T_xM$, and any $p$-dimensional plane $P\subset T_{x}M$, we have $\Ric_p(P,w)\geq K|w|^2$. In this case we write $\Ric_{p}\geq K$.
\end{definition}
\begin{remark}\label{Rmk_interRicci}
It is clear that $\Sec\geq 0$ implies $\Ric_p\geq0$ for all $p$.
More generally, we have (see \cite{KM18}, Remark 2.3 for details):
\begin{itemize}
    \item $\Sec\geq k\geq 0$ implies $\Ric_p\geq (p-1)k$ for all $1\leq p\leq \dim M$.
    \item $\Sec\geq k$ with $k<0$ implies $\Ric_p \geq pk$ for $1\leq p\leq \dim M-1$.
    \item $\Ric_2\geq k\geq0$ is equivalent to $\Sec\geq k\geq 0$.
\end{itemize}
We also remark that if $\Ric_p\geq K$ for some $2\leq p\leq \dim M$, then $\Ric_q\geq \frac{qK}{p}$ for any $q$ with $p\leq q\leq \dim M$.
\end{remark}

\begin{remark}
Some authors (e.g.\@ \cite{Cha20}) use different normalizations for lower bounds on the intermediate Ricci curvatures.
\end{remark}

\subsection{Manifolds with nonnegative intermediate Ricci curvatures}\label{Subsec_nonneg_inter_Ricci}
The goal of this section is to prove Theorem \ref{Intro_Mic_Sim_interRicci_nonneg}, and we here use the notations from Section \ref{Sec_OT_Submfd}.
In addition, we denote the second fundamental form of the submanifold $\Sigma$ by $\sff$, and $H=\tr \sff$ is the mean curvature vector.
\par
For parameters $0<\sigma<1$ and $r>0$ large enough, define the following annulus-like subset
\begin{equation*}
    \Omega=\{p\in M: \sigma r\leq d(x,p)\leq r, \forall x\in \Sigma\}.    
\end{equation*}
By approximation, we may assume that $\Omega$ is a regular domain.
Consider the optimal transport problem of the following two measures
\begin{equation*}
    \mu=\frac{1}{\int_\Sigma f^\frac{n}{n-1}}f^\frac{n}{n-1} \vol_\Sigma,\;
    \nu=\frac{1}{\vol(\Omega)}\vol_M\llcorner \Omega,
\end{equation*}
with the cost $c(x,\zeta)=\frac{1}{2}d(x,\zeta)^2$ on $\Sigma \times \Omega$.
By the generalized McCann's theorem (Theorem \ref{gen_McCann_thm}), we have a $c$-concave function $\phi$ on $\Sigma$, and the map 
\begin{equation*}
    \Phi(x,v):=\exp_x(-\nabla^\Sigma \phi(x)+v)    
\end{equation*}
defined on the subset $A\subset T^\perp \Sigma$ with the specific properties.
Note that these objects may depend implicitly on the parameters $\sigma,r$.
\par
The first step of the proof is the following inequality from integration by parts; this is also the last step in the arguments of \cite{BE22}: 
\begin{lemma}\label{int_by_part_submfd}
\begin{equation*}
    -\int_\Sigma f\Delta^{\ac}_\Sigma \phi 
    \leq r\int_{\pt \Sigma}f+r\int_\Sigma |\nabla^\Sigma f|,       
\end{equation*}
where $\Delta^{\ac}_\Sigma \phi$ is the trace of the Alexandrov Hessian of $\phi$.
\end{lemma}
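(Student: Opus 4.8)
The plan is to reduce the inequality to two ingredients: a uniform bound $|\nabla^\Sigma\phi|\le r$ valid $\vol_\Sigma$-a.e.\ on $\Sigma$, and an integration-by-parts identity adapted to the low regularity of the Kantorovich potential $\phi$ (which is only Lipschitz and semiconcave, so that $\Delta^{\ac}_\Sigma\phi$ is the trace of its Alexandrov Hessian rather than a genuine Laplacian).

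First I would establish the gradient bound. Recall $\phi$ is differentiable a.e.\ on $\Sigma$, and $\pt\Sigma$ is $\vol_\Sigma$-null, so for $\vol_\Sigma$-a.e.\ $x$ we may assume $x\in\Sigma\backslash\pt\Sigma$ and $\phi$ differentiable at $x$; since $\pt^{c_+}\phi(x)\neq\emptyset$, pick $\zeta\in\pt^{c_+}\phi(x)$. Lemma~\ref{Tangency_submfd} gives $\zeta=\exp^M_x(-\nabla^\Sigma\phi(x)+v)$ with $v\in T^\perp_x\Sigma$ and tells us that $\gamma(t)=\exp^M_x\bigl(t(-\nabla^\Sigma\phi(x)+v)\bigr)$, $t\in[0,1]$, is a minimizing geodesic, so using the orthogonal decomposition $T_xM=T_x\Sigma\oplus T^\perp_x\Sigma$,
\[
d(x,\zeta)^2=\bigl|-\nabla^\Sigma\phi(x)+v\bigr|^2=|\nabla^\Sigma\phi(x)|^2+|v|^2\ \ge\ |\nabla^\Sigma\phi(x)|^2 .
\]
Since $\zeta\in\Omega$ and $x\in\Sigma$, the definition of $\Omega$ forces $d(x,\zeta)\le r$, hence $|\nabla^\Sigma\phi(x)|\le r$. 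Thus $|\nabla^\Sigma\phi|\le r$ a.e.\ on $\Sigma$; in particular $\phi$ is $r$-Lipschitz.

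Next I would carry out the integration by parts. Because $\phi$ is $c$-concave it is semiconcave (Proposition~\ref{submfd_c_concave_semiconcave}), so $\nabla^\Sigma\phi$ has bounded variation and the distributional Laplacian $\Delta_\Sigma\phi$ is a signed Radon measure whose absolutely continuous part has density $\tr\Hess^\Sigma\phi=\Delta^{\ac}_\Sigma\phi$; moreover, locally $\phi$ differs from a concave function by a smooth one and a concave function has nonpositive distributional Laplacian, so the singular part of $\Delta_\Sigma\phi$ is $\le 0$. Since $f\ge0$, this gives
\[
-\int_\Sigma f\,\Delta^{\ac}_\Sigma\phi\ \le\ -\int_\Sigma f\,d(\Delta_\Sigma\phi) .
\]
The divergence theorem for the $BV$ field $\nabla^\Sigma\phi$ against the smooth $f$ (equivalently Gauss--Green for the Lipschitz $\phi$) then yields
\[
-\int_\Sigma f\,d(\Delta_\Sigma\phi)=\int_\Sigma\langle\nabla^\Sigma f,\nabla^\Sigma\phi\rangle\,d\vol_\Sigma-\int_{\pt\Sigma}f\,\langle\nabla^\Sigma\phi,\eta\rangle ,
\]
with $\eta$ the outward unit conormal along $\pt\Sigma$. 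Applying Cauchy--Schwarz together with $|\nabla^\Sigma\phi|\le r$ (a.e.\ on $\Sigma$, and along $\pt\Sigma$ for the trace of $\nabla^\Sigma\phi$) and $f\ge0$ bounds the first term by $r\int_\Sigma|\nabla^\Sigma f|$ and the second by $r\int_{\pt\Sigma}f$, which is exactly the claimed inequality.

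The main obstacle is the low regularity of $\phi$: I need the identification of the absolutely continuous part of $\Delta_\Sigma\phi$ with $\tr\Hess^\Sigma\phi$ together with nonpositivity of the singular part, and — more delicately — the fact that $\nabla^\Sigma\phi$ admits a trace on $\pt\Sigma$ whose norm is still controlled by $r$, so that the boundary term in the Gauss--Green formula is well-defined and estimated correctly. This is precisely where the semiconcavity/$BV$ machinery already invoked for Theorem~\ref{change_of_var} does the work; an alternative would be to mollify $\phi$, but on a manifold with boundary that is less transparent. The remaining steps — the gradient bound of Step 1 and the Cauchy--Schwarz estimates — are routine.
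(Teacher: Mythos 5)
Your argument is correct in its two main ingredients and these coincide with the paper's: the gradient bound $|\nabla^\Sigma\phi|\le r$ obtained from Lemma \ref{Tangency_submfd} plus $d(x,\zeta)\le r$ for $\zeta\in\pt^{c_+}\phi(x)\subset\Omega$, and the use of semiconcavity (Proposition \ref{submfd_c_concave_semiconcave}) to write $[\Delta_\Sigma\phi]=\Delta^{\ac}_\Sigma\phi\,\vol_\Sigma+[\Delta_\Sigma\phi]^s$ with nonpositive singular part, so that $-\int f\Delta^{\ac}_\Sigma\phi\le-\int f\,d[\Delta_\Sigma\phi]$ for $f\ge 0$. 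Where you genuinely diverge is the treatment of the boundary term: you invoke a Gauss--Green formula for the $BV$/divergence-measure field $\nabla^\Sigma\phi$ on the manifold with boundary, which requires the existence of a (normal) trace of $\nabla^\Sigma\phi$ on $\pt\Sigma$ bounded by $r$. That fact is true (it is the Anzellotti/Chen--Frid trace theory for bounded fields whose divergence is a measure, or $BV$-up-to-the-boundary traces), but it is the one step you do not actually justify, and it is \emph{not} supplied by the ``countably Lipschitz'' property cited in the proof of Theorem \ref{change_of_var} --- that was used only for the area formula, not for boundary traces. The paper avoids this machinery entirely: it tests the distributional Laplacian against $\lambda_\epsilon f$ with $\lambda_\epsilon\in C^\infty_c(\Sigma\backslash\pt\Sigma)$, integrates by parts classically (smooth compactly supported function against the Lipschitz $\phi$), bounds $-\langle\nabla^\Sigma(\lambda_\epsilon f),\nabla^\Sigma\phi\rangle\ge-r\lambda_\epsilon|\nabla^\Sigma f|-r f|\nabla^\Sigma\lambda_\epsilon|$, and recovers the boundary integral in the limit $\epsilon\to 0$ through $\int_\Sigma f|\nabla^\Sigma\lambda_\epsilon|\to\int_{\pt\Sigma}f$; no trace of $\nabla^\Sigma\phi$ on $\pt\Sigma$ is ever needed. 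So your route buys a one-line identity at the cost of a heavier (and here uncited) trace theorem, while the paper's cutoff device keeps the proof at the level of the definition of the distributional Laplacian; to make your write-up self-contained you should either supply the divergence-measure trace reference explicitly or switch to the cutoff approximation.
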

    \begin{proof}
        Recall that since $\phi$ is $c$-concave, Proposition \ref{submfd_c_concave_semiconcave} yields that $\phi$ is semiconcave.
        Then by Alexandrov-Bangert theorem, the Alexandrov Hessian $\Hess^\Sigma \phi$ exists a.e.\@; additionally, the Lebesgue decomposition of the distributional Laplacian $[\Delta_\Sigma \phi]$ (as a measure) can be written as
        \begin{equation}\label{Leb_decomp_lap_phi}
            [\Delta_\Sigma \phi]=\Delta^{\ac}_\Sigma\phi \,\vol_\Sigma+[\Delta_\Sigma \phi]^s, 
        \end{equation}
       and the singular part $[\Delta_\Sigma \phi]^s$ is nonpositive.
        \par
        For $\epsilon>0$ small enough, let $\lambda_\epsilon$ be a smooth cut-off of $\Sigma$ w.r.t.\@ $\pt \Sigma$,
        i.e. $\lambda_\epsilon\geq0$, $\lambda_\epsilon \equiv 1$ on $\Sigma_\epsilon:=\{x\in \Sigma\mid d_\Sigma(x,\pt \Sigma)>\epsilon\}$ and $\lambda_\epsilon \in C^\infty_c(\Sigma \backslash \pt \Sigma)$.
        Using the decomposition (Line \ref{Leb_decomp_lap_phi}) above, we then have
        \begin{equation*}
            \int_\Sigma \Delta_\Sigma (\lambda_\epsilon f)\phi ~d\vol_\Sigma
            =\int_\Sigma \lambda_\epsilon f ~d[\Delta_\Sigma \phi]\leq \int_\Sigma \lambda_\epsilon f\Delta^{\ac}_\Sigma\phi ~d\vol_\Sigma.            
        \end{equation*}
        On the other hand, integrating by parts with $|\nabla^\Sigma \phi(x)|\leq r$ since $\Phi(x,v)\in \Omega$,  we have
        \begin{equation*}
            \begin{split}
            \int_\Sigma \Delta_\Sigma (\lambda_\epsilon f)\phi ~d\vol_\Sigma
            &=-\int_\Sigma \langle\nabla^\Sigma(\lambda_\epsilon f), \nabla^\Sigma \phi\rangle d\vol_\Sigma\\
            &\geq -r\int_\Sigma \lambda_\epsilon|\nabla^\Sigma f|d\vol_\Sigma
            -r\int_\Sigma f|\nabla^\Sigma\lambda_\epsilon|d\vol_\Sigma.                
            \end{split}
        \end{equation*}
    The lemma now follows by combining these two inequalities and taking the limit as $\epsilon\to 0$.
    \end{proof} 

The second step is to use the change of variable formula (Theorem \ref{change_of_var}), which gives
\begin{equation}\label{Jacobian_det_Phi}
    \int_{A_x}|\det D\Phi(x,v)|dv
    =\frac{\vol(\Omega)}{\int_\Sigma f^\frac{n}{n-1}}f^\frac{n}{n-1}(x)
\end{equation}
for $\mu$-a.e.\@ $x\in \Sigma$.
\par

The third step is to estimate $\det D\Phi(\bar{x}, \bar{v})$ for fixed $(\bar{x}, \bar{v})\in A$ by $\Delta^{\ac}_\Sigma \phi(\bar{x})$.
As in Proposition 4.6 of \cite{Bre22} and in Chapter 14 of \cite{Vil09}, we use the Jacobi fields from the geodesic variation
\begin{equation}\label{def_geo_var}
    \Phi_t(x,v)=\exp_x(-t\nabla^\Sigma \phi(x)+tv)    
\end{equation}
for $t\in [0,1]$ and denote $\bar{\gamma}(t):=\Phi_t(\bar{x}, \bar{v})$.
The following lemma is similar to Lemma 4.3 in \cite{Bre22} and will be used in the comparison argument later:
\begin{lemma}\label{Lap_lowb_submfd}
$n-\Delta^{\ac}_\Sigma\phi(\bar{x})-\langle H(\bar{x}), \bar{v}\rangle\geq 0$.
\end{lemma}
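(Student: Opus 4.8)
The plan is to turn the condition $\zeta\in\pt^{c_+}\phi(\bar x)$ into a second-order inequality on $\Sigma$, take its trace, and control the resulting ambient term by a Hessian comparison governed by the intermediate Ricci bounds. Set $\zeta:=\Phi(\bar x,\bar v)$ and $\ell:=d(\bar x,\zeta)$, so $\bar\gamma$ is a minimizing geodesic from $\bar x$ to $\zeta$ with $\dot{\bar\gamma}(0)=-\nabla^\Sigma\phi(\bar x)+\bar v$ and $|\dot{\bar\gamma}(0)|=\ell$. By the construction of $A$ in Theorem~\ref{gen_McCann_thm}, $\bar x\in p_{T^\perp\Sigma}(A)\subset K$, so $\Hess^\Sigma\phi(\bar x)$ exists, and $\zeta\in\Phi(A)\subset F^{-1}(K)\cap L$ with $\bar x=F(\zeta)$; applying Proposition~4.1 of \cite{CMS01} to $\phi^{c_+}$ at $\zeta\in L$ shows $\bar x$ is not in the cut locus of $\zeta$, hence $\tfrac12 d_\zeta^2$ is smooth near $\bar x$ in $M$ and $\tfrac12 d_\zeta^2|_\Sigma$ is smooth near $\bar x$ in $\Sigma$. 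We may assume $\bar x\notin\pt\Sigma$. Since $(\bar x,\bar v)\in A$ gives $\zeta\in\pt^{c_+}\phi(\bar x)$, the function $g:=\tfrac12 d_\zeta^2|_\Sigma-\phi$ attains its minimum on $\Sigma$ at $\bar x$.

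As $\tfrac12 d_\zeta^2|_\Sigma$ is $C^2$ near $\bar x$ and $\phi$ has an Alexandrov Hessian at $\bar x$, so does $g$, and interior minimality forces $\Hess^\Sigma g(\bar x)\ge0$; tracing against an orthonormal basis $e_1,\dots,e_n$ of $T_{\bar x}\Sigma$ gives $\Delta_\Sigma(\tfrac12 d_\zeta^2|_\Sigma)(\bar x)\ge\Delta^{\ac}_\Sigma\phi(\bar x)$. I would then compute the left side by the Gauss formula $\Hess^\Sigma(F|_\Sigma)(X,Y)=\Hess^M F(X,Y)+\langle(\nabla^M F)^\perp,\sff(X,Y)\rangle$: as $\nabla^M(\tfrac12 d_\zeta^2)(\bar x)=-\dot{\bar\gamma}(0)=\nabla^\Sigma\phi(\bar x)-\bar v$ has normal part $-\bar v$, tracing and using $H=\tr\sff$ yield
\[
\Delta_\Sigma\!\Big(\tfrac12 d_\zeta^2\big|_\Sigma\Big)(\bar x)=\sum_{i=1}^{n}\Hess^M\!\Big(\tfrac12 d_\zeta^2\Big)(e_i,e_i)-\langle\bar v,H(\bar x)\rangle .
\]
Combining with the previous inequality reduces the lemma to the purely ambient estimate $\sum_{i=1}^{n}\Hess^M(\tfrac12 d_\zeta^2)(e_i,e_i)\le n$.

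For this ambient estimate, which is where the curvature hypotheses are used and which I expect to be the main obstacle, I would argue by Hessian comparison for $d_\zeta$ along $\bar\gamma$. With $\partial_r:=\nabla^M d_\zeta(\bar x)=(\nabla^\Sigma\phi(\bar x)-\bar v)/\ell$ and the identity $\Hess^M(\tfrac12 d_\zeta^2)(X,X)=\langle X,\partial_r\rangle^2+\ell\,\Hess^M(d_\zeta)(X,X)$, the estimate becomes $\ell\,\tr_{T_{\bar x}\Sigma}\Hess^M(d_\zeta)\le n-|\proj_{T_{\bar x}\Sigma}\partial_r|^2$. Decomposing $T_{\bar x}\Sigma=V_0\oplus\mathbb{R}u_1$ with $V_0=T_{\bar x}\Sigma\cap\partial_r^\perp$ and $u_1=\cos\beta\,\partial_r+\sin\beta\,e$ ($e\perp\partial_r$ unit), one parallel-transports the $n$-plane $P:=V_0\oplus\mathbb{R}e\subset\partial_r^\perp$ back along the geodesic from $\zeta$ and runs the Riccati comparison: $\Ric_n\ge0$ gives $\ell\,\tr_P\Hess^M(d_\zeta)\le n$, while $\Ric_m\ge0$ (hence $\Ric_q\ge0$ for all $q\ge m$, Remark~\ref{Rmk_interRicci}) controls $\ell\,\tr_{V_0}\Hess^M(d_\zeta)\le n-1$, and a case split on the sign of $\ell\,\Hess^M(d_\zeta)(e,e)-1$ combines the two into the desired inequality. (Equivalently, this is the $t\to0^+$ content of Proposition~\ref{Jacobian_est_along_geodesic_submfd}: setting $\mathcal J(t):=|\det D\Phi_t(\bar x,\bar v)|$, one checks $\mathcal J(t)t^{-m}\to1$ as $t\to0^+$ with right-derivative $-(\Delta^{\ac}_\Sigma\phi(\bar x)+\langle H(\bar x),\bar v\rangle)$, so concavity of $(\mathcal J(t)t^{-m})^{1/n}$ together with $\mathcal J(1)>0$ forces $\Delta^{\ac}_\Sigma\phi(\bar x)+\langle H(\bar x),\bar v\rangle\le n$.) Under $\Sec\ge0$ the ambient estimate is just $\Hess^M(\tfrac12 d_\zeta^2)\le\mathrm{Id}$; with only intermediate Ricci bounds the subtlety is to test the curvature exclusively on $n$- and $(n{+}m)$-planes and to retain the radial correction term, and making this bookkeeping work throughout the admissible range of $n$ and $m$ is the delicate point.
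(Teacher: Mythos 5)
Your first reduction is sound: for $(\bar{x},\bar{v})\in A$ one has $\zeta:=\Phi(\bar{x},\bar{v})\in\pt^{c_+}\phi(\bar{x})$ with $\bar{x}=F(\zeta)\notin\cut(\zeta)$, so tracing the inequality $\Hess^\Sigma\bigl(\tfrac12 d_\zeta^2|_\Sigma-\phi\bigr)(\bar{x})\geq 0$ and using the Gauss formula correctly reduces the lemma to the ambient estimate $\sum_{i=1}^n\Hess^M\bigl(\tfrac12 d_\zeta^2\bigr)(e_i,e_i)\leq n$. The gap is in your proof of that estimate. In the case $\ell\,\Hess^M(d_\zeta)(e,e)<1$ you need $\ell\,\tr_{V_0}\Hess^M(d_\zeta)\leq n-1$ for the $(n-1)$-plane $V_0=T_{\bar{x}}\Sigma\cap\partial_r^\perp$, and the Riccati partial-trace comparison that would deliver this requires a lower bound on $\Ric_{n-1}$. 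Remark \ref{Rmk_interRicci} only propagates intermediate Ricci bounds \emph{upward} in the dimension of the plane: $\Ric_m\geq 0$ gives $\Ric_q\geq 0$ for $q\geq m$, never for $q=n-1$ when $n-1<m$ (e.g.\ $n=2$ and any $m\geq 2$); and since $\min(n,m)\geq 3$ is compatible with some negative sectional curvatures, nothing in the hypotheses prevents $\tr_{V_0}\Hess^M(d_\zeta)$ from exceeding $(n-1)/\ell$. So only your first case (using $\Ric_n\geq 0$ on the $n$-plane $P\subset\partial_r^\perp$) is justified, and the case split does not close. The parenthetical fallback is not a repair either: in the paper's logical order Proposition \ref{Jacobian_est_along_geodesic_submfd} \emph{uses} this lemma (precisely to keep the comparison solution for $\tr Q_1$ from blowing up before $t=1$), and the concavity of $\bigl(t^{-m}|\det D\Phi_t(\bar{x},\bar{v})|\bigr)^{1/n}$ that you invoke is not established anywhere and would itself require the same Riccati control.

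The estimate you reduced to is nevertheless true under $\Ric_n\geq 0$ alone, but the natural proof is by the index form along $\bar{\gamma}$ rather than by a radial Hessian comparison from $\zeta$: since $\bar{\gamma}$ is minimizing and $\bar{x}\notin\cut(\zeta)$, one has $\Hess^M\bigl(\tfrac12 d_\zeta^2\bigr)(e_i,e_i)=I(J_i,J_i)\leq I\bigl((1-t)E_i,(1-t)E_i\bigr)$, where $I(Z,Z)=\int_0^1\bigl(|Z'|^2-R(\bar{\gamma}',Z,\bar{\gamma}',Z)\bigr)dt$ and $J_i$ is the Jacobi field with $J_i(0)=e_i$, $J_i(1)=0$; summing over $i$ gives $\sum_i\Hess^M\bigl(\tfrac12 d_\zeta^2\bigr)(e_i,e_i)\leq n-\int_0^1(1-t)^2\Ric_n(\cal{P}_t,\bar{\gamma}'(t))\,dt\leq n$, where $\cal{P}_t$ is the parallel transport of $T_{\bar{x}}\Sigma$ -- note $\Ric_n\geq 0$ is assumed for \emph{all} planes and directions, so no orthogonality to $\bar{\gamma}'$ is needed and your $V_0\oplus\bb{R}e$ bookkeeping disappears. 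This is essentially the paper's proof in a different dress: the paper applies the second variation to the functional $\tfrac12\int_0^1|\gamma'|^2dt-\phi(\gamma(0))$ with free initial point on $\Sigma$ and test fields $Z_i=(1-t)E_i$, which produces your Gauss-formula term $-\langle\sff(e_i,e_i),\bar{\gamma}'(0)\rangle$ as a boundary term and requires neither smoothness of $d_\zeta^2|_\Sigma$ near $\bar{x}$ nor any decomposition relative to the radial direction.
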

\begin{proof}
From Corollary \ref{superdiff_normalfib}, we have $\bar{\gamma}(1)=\Phi(\bar{x}, \bar{v})\in \pt^{c_+}\phi(\bar{x})$.
Thus the function $\frac{1}{2}d^2_{\bar{\gamma}(1)}|_\Sigma-\phi$
on $\Sigma$ is minimized at $\bar{x}$.
For any curve smooth $\gamma:[0,1]\to M$ satisfying $\gamma(0)\in \Sigma$ and $\gamma(1)=\bar{\gamma}(1)$, we then have
\begin{equation*}
    \begin{split}
    \frac{1}{2}\int_0^1 |\gamma'(t)|^2dt-\phi(\gamma(0))
    &\geq \frac{1}{2}\left(\int_0^1 |\gamma'(t)|dt\right)^2-\phi(\gamma(0))\\
    &\geq \frac{1}{2}d^2(\gamma(0), {\gamma}(1))-\phi(\gamma(0))\\
    &\geq \frac{1}{2}d^2(\bar{\gamma}(0), \bar{\gamma}(1))-\phi(\bar{\gamma}(0))\\
    &=\frac{1}{2}\int_0^1 |\bar{\gamma}'(t)|^2dt-\phi(\bar{\gamma}(0)).        
    \end{split}
\end{equation*}
That is, $\bar{\gamma}$ minimizes the the functional $\frac{1}{2}\int_0^1 |\gamma'(t)|^2dt-\phi(\gamma(0))$ among such curves $\gamma$.
Hence by the second variation formula, we get
\begin{equation*}
    \begin{split}
    -\langle \sff (Z(0), Z(0)), \bar{\gamma}'(0)\rangle
    &-\Hess^\Sigma_{\bar{x}}\phi(Z(0), Z(0))
    \\
    &+\int_0^1\left(|Z'(t)|^2-R(\bar{\gamma}'(t), Z(t), \bar{\gamma}'(t), Z(t))\right)dt\geq 0        
    \end{split}
\end{equation*}
    for any smooth vector field $Z$ along $\bar{\gamma}$ satisfying $Z(0)\in T_{\bar{x}}\Sigma$ and $Z(1)=0$.
\par
Now let $e_1, \ldots, e_n$ be an orthonormal basis of $T_{\bar{x}}\Sigma$, and let $E_i$ be the parallel transport of $e_i$ along the geodesic $\bar{\gamma}$.
Plugging in $\bar{\gamma}'(0)=-\nabla^\Sigma \phi(\bar{x})+\bar{v}$, $Z(t)=(1-t)E_i(t)$ into the formula above and summing over $i=1, \ldots, n$, we get
\begin{equation*}
    -\langle H(\bar{x}), \bar{v}\rangle
    -\Delta^{\ac}_\Sigma \phi(\bar{x})+n
    \geq \int_0^1 (1-t)^2  \Ric_n(\cal{P}_t, \bar{\gamma}'(t))dt\geq 0,    
\end{equation*}
where $\cal{P}_t$ is the $n$-plane in $T_{\bar{\gamma}(t)}M$ spanned by $\{E_i\}$, and we use the condition $\Ric_n \geq 0$.
\end{proof}

The following computation is the key of the whole estimation on $\det D\Phi(\bar{x}, \bar{v})$.
It can be viewed as a slight improvement of Proposition 4.6 in \cite{Bre22}, and the idea is to consider the traces of two diagonal blocks instead of the individual diagonal entries.
\begin{proposition}\label{Jacobian_est_along_geodesic_submfd}
The function
\begin{equation*}
    t^{-m}\left(1-\frac{t}{n}(\Delta^{\ac}_\Sigma \phi(\bar{x})+\langle H(\bar{x}), \bar{v}\rangle)\right)^{-n}|\det D\Phi_t(\bar{x}, \bar{v})|    
\end{equation*}
is decreasing in $t\in(0,1)$, where $\Phi_t$ is defined in Line \ref{def_geo_var}.
\end{proposition}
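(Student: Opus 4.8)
The strategy is the standard Jacobi-field/matrix-Riccati monotonicity argument used in Chapter 14 of \cite{Vil09} and in Proposition 4.6 of \cite{Bre22}, but keeping the $n$ ``tangential'' directions and the $m$ ``normal'' directions organized as two diagonal blocks rather than tracking individual entries. First I would set up the Jacobi fields: fix $(\bar x,\bar v)\in A$, let $\bar\gamma(t)=\Phi_t(\bar x,\bar v)$, which is a minimizing geodesic by Theorem \ref{gen_McCann_thm}(3) / Lemma \ref{Tangency_submfd}. Choosing a basis of $T_{\bar x}\Sigma$ and of $T^\perp_{\bar x}\Sigma$ and varying the initial conditions of $\Phi_t$ accordingly, one obtains an $(n+m)\times(n+m)$ matrix Jacobi field $\mathcal J(t)$ along $\bar\gamma$ with $\det D\Phi_t(\bar x,\bar v)=\det \mathcal J(t)$ (up to a fixed nonzero factor coming from the choice of frame). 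The initial conditions are: $\mathcal J(0)$ is the ``inclusion'' $T_{\bar x}\Sigma\oplus T^\perp_{\bar x}\Sigma\to T_{\bar x}M$ restricted appropriately (so rank drops for the normal block at $t=0$, which is the source of the $t^{m}$), and $\mathcal J'(0)$ is governed by $-\Hess^\Sigma\phi(\bar x)$ on the tangential part, by the second fundamental form $\sff$ coupling tangential-to-normal, and by the identity on the normal-to-normal part. This is exactly the linearization that already appeared in the second variation formula in the proof of Lemma \ref{Lap_lowb_submfd}.

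Next I would reduce to a scalar monotonicity statement. Write $U(t):=\mathcal J'(t)\mathcal J(t)^{-1}$, the Riccati solution, defined for $t\in(0,1)$ where $\mathcal J(t)$ is invertible (invertibility on $(0,1)$ follows because $\bar\gamma$ is minimizing up to $t=1$, so there are no interior conjugate points). Then $\frac{d}{dt}\log|\det\mathcal J(t)|=\tr U(t)$, and $U$ satisfies the Riccati equation $U'+U^2+\mathcal R(t)=0$, where $\mathcal R(t)$ is the curvature endomorphism $w\mapsto R(w,\bar\gamma'(t))\bar\gamma'(t)$. Taking the trace and using $\tr(U^2)\ge \frac{(\tr U)^2}{n+m}$ would give the usual Bishop-type bound, but here I want the sharper split. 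The point is to compare with the model in which the tangential block behaves like the Jacobian of $\exp$ with a shear of size $\frac1n(\Delta^{\ac}_\Sigma\phi(\bar x)+\langle H(\bar x),\bar v\rangle)$ and the normal block behaves like $t\cdot\mathrm{Id}$. Concretely, set
\begin{equation*}
  \Psi(t):=t^{-m}\Big(1-\tfrac{t}{n}\big(\Delta^{\ac}_\Sigma\phi(\bar x)+\langle H(\bar x),\bar v\rangle\big)\Big)^{-n}\,|\det D\Phi_t(\bar x,\bar v)|,
\end{equation*}
and compute $\frac{d}{dt}\log\Psi(t)=\tr U(t)-\dfrac{m}{t}+\dfrac{\Delta^{\ac}_\Sigma\phi(\bar x)+\langle H(\bar x),\bar v\rangle}{1-\frac{t}{n}(\Delta^{\ac}_\Sigma\phi(\bar x)+\langle H(\bar x),\bar v\rangle)}$. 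I would then show this is $\le 0$ by a convexity/trace argument: decompose $U(t)$ into blocks, bound the normal-block contribution against $m/t$ using that the normal Jacobi fields vanish to first order at $0$, bound the tangential-block contribution using the Riccati inequality together with $\tr\mathcal R(t)=\Ric_{n+m}(\cdot,\bar\gamma'(t))$ — or, more precisely, splitting the curvature trace into an $n$-plane part controlled by $\Ric_n\ge 0$ (this is where the hypothesis enters, as in Lemma \ref{Lap_lowb_submfd}) and an $m$-plane part controlled by $\Ric_m\ge 0$ — and applying the scalar inequality $\tr(U^2)\ge \frac{(\tr U_{\mathrm{tan}})^2}{n}+\frac{(\tr U_{\mathrm{norm}})^2}{m}$ to the block-diagonal part. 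Feeding these into the ODE for $\log\Psi$ collapses it to a one-variable comparison that can be checked directly, and the normalization at $t=0$ (namely $\Psi(0^+)=1$, using $|\det\mathcal J(t)|\sim t^m$ and the tangential factor $\to 1$) pins down the constant.

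The main obstacle I anticipate is the bookkeeping at $t=0$: $\mathcal J(t)$ is singular there, so $U(t)=\mathcal J'(t)\mathcal J(t)^{-1}$ blows up, and one must extract exactly the $m/t$ singularity from the normal block and show the remainder stays bounded and has the right sign. This requires writing $\mathcal J(t)$ in a frame adapted to the $T_{\bar x}\Sigma\oplus T^\perp_{\bar x}\Sigma$ splitting, expanding to order $t$, and verifying that the cross terms (the $\sff$-coupling) do not spoil the block structure of the leading Riccati behavior. A secondary subtlety is that $\Hess^\Sigma\phi$ is only an Alexandrov Hessian, so the whole computation must be read as the second variation inequality established in Lemma \ref{Lap_lowb_submfd} rather than a genuine pointwise ODE; but since that lemma already packages exactly the inequality we need, this is a matter of phrasing rather than a real difficulty. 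Once the boundary behavior is handled, the monotonicity is a routine consequence of the Riccati comparison.
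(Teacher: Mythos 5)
Your plan follows essentially the same route as the paper's proof: Jacobi fields along $\bar{\gamma}$ assembled into an $(n+m)\times(n+m)$ matrix, the symmetric Riccati solution split into its tangential $n\times n$ and normal $m\times m$ blocks, block-trace Cauchy inequalities combined with $\Ric_n,\Ric_m\geq 0$, separate scalar ODE comparisons (the normal block producing the $m/t$ singularity, normalized via $t^{-m}\det\to 1$), and monotonicity of the quotient. The one slight imprecision is the role of Lemma \ref{Lap_lowb_submfd}: in the paper it is invoked precisely to rule out blow-up of the tangential comparison solution on $(0,1)$ (so the denominator $1-\frac{t}{n}(\Delta^{\ac}_\Sigma\phi(\bar{x})+\langle H(\bar{x}),\bar{v}\rangle)$ stays positive), while the Alexandrov Hessian simply enters as initial data of genuine Jacobi fields along the fixed geodesic, so no reinterpretation of the computation as a variational inequality is needed.
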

\begin{proof}
Let $e_1, \ldots, e_n$ be an orthonormal basis of $T_{\bar{x}}\Sigma$.
Let $(x_1,\hdots,x_n)$ be a system of geodesic normal coordinates on $\Sigma$ around the point $\bar{x}$ with $\frac{\partial}{\partial x_i} = e_i$ at $\bar{x}$. 
Let $\{\nu_{n+1},\hdots,\nu_{n+m}\}$ be a local orthonormal frame of $T^\perp \Sigma$, chosen so that $\langle \nabla_{e_i} \nu_\alpha,\nu_\beta \rangle = 0$ at $\bar{x}$. 
We write a normal vector $v$ as $v = \sum_{\alpha=n+1}^{n+m} v_\alpha \nu_\alpha$. 
With this understood, $(x_1,\hdots,x_n,v_{n+1},\hdots,v_{n+m})$ is a local coordinate system on the total space of $T^\perp \Sigma$.
\par
For each $1 \leq i \leq n$, we denote by $E_i(t)$ the parallel transport of $e_i$ along $\bar{\gamma}$.
Moreover, for each $1 \leq i \leq n$, we denote by $X_i(t)=(D \Phi_t)e_i$ the unique Jacobi field along $\bar{\gamma}$ satisfying $X_i(0) = e_i$ and 
\begin{equation*}
    \begin{cases}
    \langle X_i'(0),e_j \rangle &= -(\Hess^\Sigma \phi)(e_i,e_j) - \langle \sff(e_i,e_j),\bar{v} \rangle, \\ 
    \langle X_i'(0),\nu_\beta \rangle &= -\langle \sff(e_i,\nabla^\Sigma \phi(\bar{x})),\nu_\beta \rangle
    \end{cases}
\end{equation*}
for all $1 \leq j \leq n$ and all $n+1 \leq \beta \leq n+m$.
For each $n+1 \leq \alpha \leq n+m$, we denote by $N_\alpha(t)$ the parallel transport of $\nu_\alpha$ along $\bar{\gamma}$.
Moreover, for each $n+1 \leq \alpha \leq n+m$, we denote by $Y_\alpha(t)=(D\Phi_t) \nu_\alpha$ the unique Jacobi field along $\bar{\gamma}$ satisfying $Y_\alpha(0) = 0$ and $Y_\alpha'(0) = \nu_\alpha$.
We have that $X_1(t),\hdots,X_n(t),Y_{n+1},\hdots,Y_{n+m}(t)$ are linearly independent for each $t \in (0,1)$ since  $\bar{\gamma}$ is minimizing from Lemma \ref{Tangency_submfd},
\par
Let us define an $(n+m) \times (n+m)$-matrix $P(t)$ by 
\begin{equation*}
    \begin{array}{ll}
P_{ij}(t) = \langle X_i(t),E_j(t) \rangle, & P_{i\beta}(t) = \langle X_i(t),N_\beta(t) \rangle, \\ 
P_{\alpha j}(t) = \langle Y_\alpha(t),E_i(t) \rangle, & P_{\alpha\beta}(t) = \langle Y_\alpha(t),N_\beta(t) \rangle ,
\end{array}    
\end{equation*}
for $1 \leq i,j \leq n$ and $n+1 \leq \alpha,\beta \leq n+m$. Moreover, we define an $(n+m) \times (n+m)$-matrix $S(t)$ by 
\begin{equation*}
    \begin{array}{ll}
S_{ij}(t) = R(\bar{\gamma}'(t),E_i(t),\bar{\gamma}'(t),E_j(t)),
& S_{i\beta}(t) = R(\bar{\gamma}'(t),E_i(t),\bar{\gamma}'(t),N_\beta(t)), \\ 
S_{\alpha j}(t) = R(\bar{\gamma}'(t),N_\alpha(t),\bar{\gamma}'(t),E_j(t)),
& S_{\alpha\beta}(t) = R(\bar{\gamma}'(t),N_\alpha(t),\bar{\gamma}'(t),N_\beta(t)) ,
\end{array}    
\end{equation*}
for $1 \leq i,j \leq n$ and $n+1 \leq \alpha,\beta \leq n+m$. Clearly  $S(t)$ is symmetric.
Since the vector fields $X_1(t),\hdots,X_n(t),Y_{n+1}(t),\hdots,Y_{n+m}(t)$ are Jacobi fields, we obtain
\begin{equation*}
    P''(t) = -P(t) S(t).    
\end{equation*}
The initial conditions of $P$ are
\begin{equation*}
P(0) = 
\begin{bmatrix} \delta_{ij}  & 0 \\ 0 & 0 \end{bmatrix},\;
P'(0) = 
\begin{bmatrix} (-\Hess^\Sigma \phi)(e_i,e_j) - \langle \sff(e_i,e_j),\bar{v} \rangle & \langle -\sff(e_i,\nabla^\Sigma \phi),\nu_\beta \rangle \\ 0 & \delta_{\alpha\beta} \end{bmatrix}.
\end{equation*}
In particular, the matrix $P'(0) P(0)^T$ is symmetric. Moreover, the matrix 
\begin{equation*}
   \frac{d}{dt} (P'(t) P(t)^T) = P''(t) P(t)^T + P'(t) P'(t)^T = -P(t) S(t) P(t)^T + P'(t) P'(t)^T    
\end{equation*}
is symmetric for each $t$. Thus, we conclude that the matrix $P'(t) P(t)^T$ is symmetric for all $t$. 
\par

Since $X_1(t),\hdots,X_n(t),Y_{n+1},\hdots,Y_{n+m}(t)$ are linearly independent for each $t \in (0,1)$, the matrix $P(t)$ is invertible for $t \in (0,1)$. 
With $P'(t) P(t)^T$ being symmetric, it follows that the matrix $Q(t) := P(t)^{-1} P'(t)$ is symmetric. 
Furthermore, $Q$ satisfies the Riccati equation 
\begin{equation}\label{Riccati_eq_submfd}
    Q'(t) = P(t)^{-1} P''(t) - P(t)^{-1} P'(t) P(t)^{-1} P'(t) = -S(t) - Q(t)^2    
\end{equation}
for all $t \in (0,1)$.
We write
\begin{equation*}
    Q=\begin{bmatrix}
    Q_1&Q_2\\Q_2^t&Q_3
    \end{bmatrix}    
\end{equation*}
where $Q_1$ is $n\times n$, $Q_2$ is $n\times m$, and $Q_3$ is $m\times m$, and $Q_1, Q_3$ are symmetric.
\par
We now use the curvature conditions to derive differential inequalities for $\tr Q_1$ and $\tr Q_3$.
Let $\tr_n$ and $\tr_m$ denote the trace of the upper-left  $n\times n$ block and the lower-right $m\times m$ block in an $(n+m)\times (n+m)$ matrix, respectively.
First of all, since $\Ric_n, \Ric_m\geq 0$, we have
\begin{align*}
    -\tr_n S(t)&=-\Ric_n(\cal{E}_t, \bar{\gamma}'(t)))\leq 0;\\
    -\tr_m S(t)&=-\Ric_m(\cal{N}_t, \bar{\gamma}'(t))\leq 0,      
\end{align*}
where $\cal{E}_t, \cal{N}_t \subset T_{\bar{\gamma}(t)}M$ are the the spaces spanned by $\{E_i\}_{i=1}^n$ and $ \{N_\alpha\}_{\alpha=n+1}^{n+m}$, respectively.
Next, by Cauchy inequality we have
\begin{align*}
    \tr_n (Q^2)&=\tr(Q_1^2+Q_2Q_2^t)\geq \tr (Q_1^2)\geq \frac{1}{n}(\tr Q_1)^2;\\
    \tr_m (Q^2)&=\tr(Q_2^tQ_2+Q_3^2)\geq \tr(Q_3^2)\geq \frac{1}{m}(\tr Q_3)^2.      
\end{align*}
Combined with the Riccati equation \ref{Riccati_eq_submfd} above, we have the differential inequalities:
\begin{equation*}
    \tr Q_1'\leq -\frac{1}{n} (\tr Q_1)^2,\;
    \tr Q_3'\leq -\frac{1}{m} (\tr Q_3)^2. 
\end{equation*}
To derive the initial conditions of $\tr Q_1$ and $\tr Q_3$, consider the asymptotic expansion
\begin{equation}\label{asymp_P}
    P(t) = \begin{bmatrix} \delta_{ij} + O(t) & O(t) \\ O(t) & t \, \delta_{\alpha\beta} + O(t^2) \end{bmatrix},    
\end{equation}
implying
\begin{equation*}
    P(t)^{-1} = \begin{bmatrix} \delta_{ij} + O(t) & O(1) \\ O(1) & t^{-1}  \delta_{\alpha\beta} + O(1) \end{bmatrix} 
\end{equation*}
as $t \to 0$. 
Moreover, using the Jacobi field equations we have
\begin{equation*}
    P'(t) = \begin{bmatrix} (-\Hess^\Sigma \phi)(e_i,e_j) - \langle \sff(e_i,e_j),\bar{v} \rangle + O(t) & O(1) \\ O(t) & \delta_{\alpha\beta} + O(t) \end{bmatrix}    
\end{equation*}
as $t \to 0$. Consequently, the matrix $Q(t) = P(t)^{-1} P'(t)$ satisfies the asymptotic expansion 
\begin{equation*}
    Q(t) = \begin{bmatrix} (-\Hess^\Sigma \phi)(e_i,e_j) - \langle \sff(e_i,e_j),\bar{v} \rangle + O(t) & O(1) \\ O(1) & t^{-1} \delta_{\alpha\beta} + O(1) \end{bmatrix}    
\end{equation*}
as $t \to 0$, and we thus have the initial conditions
\begin{equation}\label{init_Q_submfd}
    \tr Q_1\sim -\Delta^{\ac}_\Sigma \phi(\bar{x}) - \langle H(\bar{x}),\bar{v}\rangle+O(t),\;    \tr Q_3 \sim \frac{m}{t}+O(1)
\end{equation}
as $t \to 0$.
Hence we can find a small number $\tau_0 \in (0,1)$ such that
\begin{equation*}
    \tr Q_1(\tau)<-\Delta^{\ac}_\Sigma \phi(\bar{x}) - \langle H(\bar{x}),\bar{v}\rangle+\sqrt{\tau}=:G(\tau),\;
    0<\tr Q_3(\tau)<\frac{2m}{\tau}    
\end{equation*}
for all $\tau \in (0,\tau_0)$. 
For such a fixed $\tau$, a standard ODE comparison principle together with Lemma \ref{Lap_lowb_submfd} implies
\begin{equation*}
    \tr Q_1(t)\leq \frac{G(\tau)}{1+\frac{(t-\tau)G(\tau)}{n}},\;
    \tr Q_3(t)\leq\frac{m}{t-\frac{\tau}{2}}   
\end{equation*}
for all $t \in (\tau,1)$.
Passing to the limit as $\tau \to 0$, we conclude that 
\begin{equation*}
        \tr Q_1(t)\leq \frac{-\Delta^{\ac}_\Sigma \phi(\bar{x}) - \langle H(\bar{x}),\bar{v}\rangle}{1-\frac{t}{n}(\Delta^{\ac}_\Sigma \phi(\bar{x}) + \langle H(\bar{x}),\bar{v}\rangle)},\;
    \tr Q_3(t)\leq\frac{m}{t}
\end{equation*}
for $t\in (0,1)$, and therefore
\begin{equation}\label{est_trQ}
\tr Q(t)=\tr Q_1(t)+\tr Q_3(t)\leq \frac{m}{t}+ \frac{-\Delta^{\ac}_\Sigma \phi(\bar{x}) - \langle H(\bar{x}),\bar{v} \rangle }{1+\frac{t}{n}(-\Delta^{\ac}_\Sigma \phi(\bar{x}) - \langle H(\bar{x}),\bar{v} \rangle)}        
\end{equation}
for all $t \in (0,1)$.
\par
We can now estimate $\det P$.
From the asymptotic expansion of $P$ in Line (\ref{asymp_P}), we have $\lim_{t \to 0} t^{-m} \det P(t) = 1$. 
Since $P(t)$ is invertible for each $t \in (0,1)$, it follows that $\det P(t) > 0$ for such $t$, and thus we can consider the function $\log \det P(t)$.
Using the estimate of $\tr Q$ in Line \ref{est_trQ} above, we obtain 
\begin{align*}
    \frac{d}{dt} \log \det P(t) = \text{\rm tr}(Q(t)) 
&\leq \frac{m}{t}+ \frac{-\Delta^{\ac}_\Sigma \phi(\bar{x}) - \langle H(\bar{x}),\bar{v} \rangle }{1+\frac{t}{n}(-\Delta^{\ac}_\Sigma \phi(\bar{x}) - \langle H(\bar{x}),\bar{v} \rangle)}\\
&=m\frac{d}{dt}\log(t) + n\frac{d}{dt} \log \left(1+\frac{t}{n}(-\Delta^{\ac}_\Sigma \phi(\bar{x}) - \langle H(\bar{x}),\bar{v} \rangle)\right)
\end{align*}
for all $t \in (0,1)$. Consequently, the function
\begin{equation*}
    t \mapsto t^{-m} \left(1+\frac{t}{n}(-\Delta^{\ac}_\Sigma \phi(\bar{x}) - \langle H(\bar{x}),\bar{v} \rangle)\right)^{-n}\det P(t) 
\end{equation*}
is  decreasing for $t \in (0,1)$. 
Finally observe that $|\det D\Phi_t( \bar{x}, \bar{v})| =
\det P(t)$ for all $t \in(0,1)$ since $\{E_i\}_{i=1}^n\cup  \{N_\alpha\}_{\alpha=n+1}^{n+m}$ forms an orthonormal basis.
This finishes the proof of the proposition.
\end{proof}

Since $\lim_{t \to 0} t^{-m}|\det D\Phi_t(\bar{x},\bar{v})| = 1$ and $\Phi_1=\Phi$, we have the estimate:
\begin{corollary}[\cite{Bre22}, Corollary 4.7]\label{Jacobian_est_submfd}
The Jacobian determinant of $\Phi$ satisfies
\begin{equation*}
    |\det D\Phi(x,v)| \leq \left(1-\frac{1}{n}(\Delta^{\ac}_\Sigma \phi(x) + \langle H(x),v \rangle) \right)^n    
\end{equation*}
for $(x,v) \in A$. 
\end{corollary}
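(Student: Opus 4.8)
The plan is to read the estimate off directly from the monotonicity statement in Proposition \ref{Jacobian_est_along_geodesic_submfd} by taking the two endpoint limits $t \to 0^{+}$ and $t \to 1^{-}$. Fix $(\bar x,\bar v)\in A$ and abbreviate $c:=\Delta^{\ac}_\Sigma \phi(\bar x)+\langle H(\bar x),\bar v\rangle$, so that Proposition \ref{Jacobian_est_along_geodesic_submfd} says the function $t\mapsto t^{-m}\bigl(1-\tfrac{t}{n}c\bigr)^{-n}\,|\det D\Phi_t(\bar x,\bar v)|$ is decreasing on $(0,1)$. Thus the whole argument reduces to three routine checks: (a) that this function is well-defined (its base is positive) on all of $(0,1)$; (b) that its limit as $t\to 0^{+}$ equals $1$; and (c) that passing to the limit $t\to 1^{-}$ and using $\Phi_1=\Phi$ yields the claim.

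For (a), I would invoke Lemma \ref{Lap_lowb_submfd}, which gives $c\le n$; hence $1-\tfrac{t}{n}c\ge 1-t>0$ for every $t\in(0,1)$ (and $1-\tfrac{t}{n}c\ge 1$ when $c\le 0$), so the displayed quantity is positive and finite on $(0,1)$. For (b), the asymptotic expansion of $P(t)$ recorded in Line \ref{asymp_P} gives $\det P(t)=t^{m}(1+o(1))$ as $t\to 0$; since $|\det D\Phi_t(\bar x,\bar v)|=\det P(t)$ this yields $\lim_{t\to 0^{+}} t^{-m}|\det D\Phi_t(\bar x,\bar v)|=1$, while $\bigl(1-\tfrac{t}{n}c\bigr)^{-n}\to 1$, so the monotone function has limit $1$ at $0^{+}$. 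Monotonicity then gives
\[
t^{-m}\bigl(1-\tfrac{t}{n}c\bigr)^{-n}\,|\det D\Phi_t(\bar x,\bar v)|\le 1 \qquad\text{for all } t\in(0,1).
\]

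Finally, for (c), I would let $t\to 1^{-}$. Since $\Phi_t(\bar x,\bar v)$ is built from Jacobi fields along the geodesic $\bar\gamma$, which is defined on all of $[0,1]$, the matrix $P(t)$ (and hence $|\det D\Phi_t(\bar x,\bar v)|$) depends continuously on $t$ up to $t=1$, with $\Phi_1=\Phi$; also $t^{-m}\to 1$. When $c<n$ we get $\bigl(1-\tfrac{t}{n}c\bigr)^{-n}\to\bigl(1-\tfrac1n c\bigr)^{-n}$, and the displayed inequality becomes exactly $|\det D\Phi(\bar x,\bar v)|\le\bigl(1-\tfrac1n c\bigr)^{n}$, which is the assertion. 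The only point needing one extra word is the degenerate case $c=n$ (equality in Lemma \ref{Lap_lowb_submfd}): then $\bigl(1-\tfrac{t}{n}c\bigr)^{-n}=(1-t)^{-n}\to+\infty$, so the uniform bound forces $|\det D\Phi_t(\bar x,\bar v)|\to 0$, whence $|\det D\Phi(\bar x,\bar v)|=0=\bigl(1-\tfrac1n c\bigr)^{n}$ and the inequality still holds. This degenerate case is the only ``obstacle,'' and it is a triviality; the substantive work has already been done in Proposition \ref{Jacobian_est_along_geodesic_submfd} and Lemma \ref{Lap_lowb_submfd}.
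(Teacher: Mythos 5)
Your proposal is correct and follows the same route as the paper, which derives the corollary directly from the monotonicity in Proposition \ref{Jacobian_est_along_geodesic_submfd} together with $\lim_{t\to 0}t^{-m}|\det D\Phi_t(\bar{x},\bar{v})|=1$ and $\Phi_1=\Phi$. Your additional checks --- positivity of $1-\tfrac{t}{n}c$ via Lemma \ref{Lap_lowb_submfd} and the degenerate case $c=n$ --- are just rigor the paper leaves implicit, and they are handled correctly.
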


The final step is to combine the results in the previous steps.
From the change of variable formula (Line \ref{Jacobian_det_Phi}) and the corollary above, for $\mu$-a.e.\@ $x\in \Sigma$ we have
\begin{align*}
    \frac{\vol(\Omega)}{\int_\Sigma f^\frac{n}{n-1}} f^\frac{n}{n-1}(x)
    = \int_{A_x}|\det D\Phi(x,v)|dv
    \leq \int_{A_x}\left(1-\frac{1}{n}(\Delta^{\ac}_\Sigma \phi(x) + \langle H(x),v \rangle) \right)^n dv.
\end{align*}
    Since $\Phi(x,v)\in \Omega$, we have $|v|\leq d(x, \Phi(x,v))\leq r$.
    Thus by Cauchy inequality, we have
\begin{equation*}
    \int_{A_x}\left(1-\frac{1}{n}(\Delta^{\ac}_\Sigma \phi(x) + \langle H(x),v \rangle) \right)^n dv
    \leq \int_{A_x}\left( 1-\frac{1}{n}\Delta^{\ac}_\Sigma \phi(x)+\frac{1}{n}|H(x)|r\right)^n dv,    
\end{equation*}
and the integrand now no longer depends on $v$.
Again since $\Phi(x,v)\in \Omega$, we have
\begin{equation}\label{est_along_normal_fiber}
    \begin{split}
            \int_{A_x}dv
    &\leq \int_{\{v\in T^\perp_x\Sigma:\sigma^2r^2<|v|^2+|\nabla \phi(x)|^2<r^2}dv\\
    &= |B^m|\left( (r^2-|\nabla \phi(x)|^2)^\frac{m}{2}-(\sigma^2r^2-|\nabla \phi(x)|^2)_+^\frac{m}{2}\right)\\
    &\leq  \frac{m}{2}|B^m|r^m(1-\sigma^2),
    \end{split}
\end{equation}
where we use the mean value theorem with $m\geq 2$ in the last step.
Consequently, we have the following inequality:
\begin{equation*}
    \frac{\vol(\Omega)}{\int_\Sigma f^\frac{n}{n-1}} f^\frac{n}{n-1}(x)\leq \left( 1-\frac{1}{n}\Delta^{\ac}_\Sigma \phi(x)+\frac{1}{n}|H(x)|r\right)^n \frac{m}{2}|B^m|r^m(1-\sigma^2).
\end{equation*}
Taking the $n$-th root, multiplying by $f$, and integrating over $\Sigma$  on both sides, we get
\begin{equation*}
    n\left(\frac{2\vol(\Omega)}{m|B^m|(1-\sigma^2)}\right)^\frac{1}{n}\left( \int_\Sigma f^\frac{n}{n-1}\right)^\frac{n-1}{n} 
    \leq r^\frac{m}{n}\int_\Sigma (nf-f\Delta^{\ac}_\Sigma \phi + rf|H|).    
\end{equation*}
Using the inequality from integration by parts (Lemma \ref{int_by_part_submfd}), we get 
\begin{equation}\label{submfd_intRic_nonneg_before_limit}
   n\left(\frac{2\vol(\Omega)}{m|B^m|(1-\sigma^2)}\right)^\frac{1}{n}\left( \int_\Sigma f^\frac{n}{n-1}\right)^\frac{n-1}{n} 
   \leq r^\frac{m}{n}\left(n\int_\Sigma f+
   r\int_{\pt \Sigma}f+r\int_\Sigma |\nabla^\Sigma f |+r\int_\Sigma f|H|\right).
\end{equation}
Recall the definition of the set $\Omega=\{y\in M \mid \sigma r\leq d(y,x)\leq r, \forall x\in \Sigma\}$.
Since $\sigma<1$, we have $\sigma (2-\delta)<\delta$ for any $\delta<1$ close enough to $1$.
Choose any $x_0\in \Sigma$.
By triangle inequality, we then have $B_{\sigma (2-\delta)r, \delta r}(x_0)\subset \Omega$
for $r$ large enough, where $B_{a,b}(x_0)$ denotes the annulus in $M$ centered at $x_0$ with radii $a<b$.
We divide both sides of Line \ref{submfd_intRic_nonneg_before_limit} by $r^\frac{n+m}{n}$ and let $r \to \infty$ while keeping $\sigma, \delta$ fixed. 
This gives 
\begin{equation*}
    n\theta^\frac{1}{n}\left(\frac{2|B^{n+m}|(\delta^{n+m}-(\sigma(2-\delta))^{n+m})}{m|B^m|(1-\sigma^2)}\right)^\frac{1}{n}\left( \int_\Sigma f^\frac{n}{n-1}\right)^\frac{n-1}{n} 
   \leq \int_{\pt \Sigma}f+\int_\Sigma |\nabla^\Sigma f |+\int_\Sigma f|H|.
\end{equation*}
Finally we take the limit $\delta\to 1^-$ and then let $\sigma \to 1^-$ to conclude
\begin{equation*}
       n\theta^\frac{1}{n}\left(\frac{(n+m)|B^{n+m}|}{m|B^m|}\right)^\frac{1}{n}\left( \int_\Sigma f^\frac{n}{n-1}\right)^\frac{n-1}{n} 
   \leq\int_{\pt \Sigma}f+\int_\Sigma |\nabla^\Sigma f |+\int_\Sigma f|H|.
\end{equation*}
This finishes the proof of Theorem Theorem \ref{Intro_Mic_Sim_interRicci_nonneg}.

\subsection{Manifolds with positive lower bounds on intermediate Ricci curvatures}\label{Subsec_pos_inter_Ricci}
In this subsection, we assume $\Ric_n\geq (n-1)k_1, \Ric_m\geq (m-1)k_2$ for some $k_1, k_2>0$ on the ambient complete manifold $M$.
Then since $\Ric\geq (n-1)k_1+(m-1)k_2>0$, Myers' theorem (see e.g. \cite{Pet16}, Theorem 6.3.3) yields that $M$ is compact (without boundary).
Hence there is no guarantee that the torus-like subset $
    \Omega=\{p\in M: \sigma r\leq d(x,p)\leq r, \forall x\in \Sigma\}
$
considered in the previous subsection will have positive volume since $r$ cannot be arbitrarily large.
Nevertheless, taking $\Omega=M$, $r=\diam(M)$ in the proof of Theorem \ref{Intro_Mic_Sim_interRicci_nonneg} (see Subsection \ref{Subsec_nonneg_inter_Ricci} above) with little adjustment in Line \ref{est_along_normal_fiber} yields Corollary \ref{Intro_Mic_Sim_M_closed_int_Ric_nonneg}.
\par
On the other hand, it is possible to derive a more general result, namely Theorem \ref{Intro_interRicci_pos_tubular}, by modifying the proof of Theorem \ref{Intro_Mic_Sim_interRicci_nonneg}.
The idea is to use the positive bounds on the intermediate Ricci curvatures in the  Riccati equation (Line \ref{Riccati_eq_submfd}) to get another type of differential inequalities.
In that case, the speed of the transporting geodesic $|\gamma'(t)|=|\nabla^\Sigma \phi(x)+v|$ comes into play, which can be controlled by our choice of the target domain $\Omega$ since $\gamma$ is a minimizing.
We here choose $\Omega$ as the complement of a tubular neighborhood of $\Sigma$ to obtain a lower bound on the speed.
\begin{proof}[Proof of Theorem \ref{Intro_interRicci_pos_tubular}]
Most of the notations and arguments are from the previous Subsection \ref{Subsec_nonneg_inter_Ricci}. Recall that $N_\epsilon$ is the $\epsilon$-tubular neighborhood of $\Sigma$ for some $\epsilon>0$, and we may assume that $\Omega=M\backslash N_\epsilon$ has positive volume.
Consider the optimal transport problem between the following two probability measures:
\begin{equation*}
    \mu=\frac{1}{\int_\Sigma f^\frac{n}{n-1}}f^\frac{n}{n-1} \vol_\Sigma,\;
    \nu=\frac{1}{\vol(\Omega)}\vol_M\llcorner \Omega
\end{equation*}
on $\Sigma$ and $\Omega$ respectively, with the same cost function $c(x,\zeta)=\frac{1}{2}d_M^2(x,\zeta)$ on $\Sigma \times \Omega$.
By the generalized McCann's theorem (Theorem \ref{gen_McCann_thm}), we have a $c$-concave function $\phi$ on $\Sigma$, and the map 
\begin{equation*}
    \Phi(x,v):=\exp_x(-\nabla^\Sigma \phi(x)+v)    
\end{equation*}
defined on the set $A\subset T^\perp \Sigma$ such that $\Phi(A)$ has full measure in $\Omega$.
Since $\Phi(x,v)\in \Omega$, we have
\begin{equation*}
    \epsilon^2\leq d^2(x, \Phi(x,v))= |\nabla^\Sigma (x)|^2+|v|^2\leq \diam(M)^2.    
\end{equation*}
Now we estimate $|\det D\Phi(x,v)|$ for fixed $(x,v)\in A$ by modifying Proposition \ref{Jacobian_est_along_geodesic_submfd}.
Recall the Riccati equation (Line \ref{Riccati_eq_submfd}):
\begin{equation*}
    Q'=-S-Q^2.    
\end{equation*}
By $\Ric_n\geq (n-1)k_1, \Ric_m \geq (m-1)k_2$ and Cauchy inequality, we get
\begin{equation*}
        \tr Q_1'\leq -(n-1)k_1 \epsilon^2 -\frac{1}{n} (\tr Q_1)^2,\;
    \tr Q_3'\leq -(m-1)k_2\epsilon^2 -\frac{1}{m} (\tr Q_3)^2.
\end{equation*}
We have the same initial conditions for $\tr Q_1$ and $\tr Q_3$ as in Line \ref{init_Q_submfd}.
Since $k_1, k_2>0$ and $n,m\geq 2$, standard ODE comparison arguments as those from Line \ref{init_Q_submfd} to Line \ref{est_trQ} then imply
\begin{equation*}
    \frac{d}{dt}\log \det P
    =\tr Q=\tr Q_1(t)+\tr Q_3(t)
    \leq \frac{d}{dt}(n\log \cos G_1+m\log \cos G_2)    
\end{equation*}
for $t\in (0,1)$, where 
\begin{equation*}
        G_1(t)=-t\epsilon\sqrt{\frac{k_1(n-1)}{n}}+\arctan\left(\frac{-\Delta^{\ac}_\Sigma \phi(x)-\langle H(x), v\rangle}{\epsilon\sqrt{k_1n(n-1)}}\right);\;
    G_2(t)=-t\epsilon\sqrt{\frac{k_2(m-1)}{m}}+\frac{\pi}{2}.
\end{equation*}
\begin{remark}\label{rmk_nonblowup}
To see that there is no blow-up for $t\in (0,1)$, assume the contrary that there is some $0<t_0<1$ such that $G_1(t)$ tends to $\frac{-\pi}{2}$ as $t\to t_0^-$.
Then $\tr Q$ will tend to $-\infty$ as $t\to t_0^-$.
However this is impossible since $\tr Q$ is continuous on $[0,1)$.
Thus the inequality will hold for $t\in (0,1)$ and in fact $G_1(t)\in (\frac{-\pi}{2}, \frac{\pi}{2})$.
For the same reason we have $G_2(t)\in (\frac{-\pi}{2}, \frac{\pi}{2})$.
\end{remark}
Consequently, the following function in $t$ is decreasing:
\begin{equation*}
     \frac{\det P(t)}{\cos^n G_1(t) \cos^m G_2(t)}.   
\end{equation*}
Since $\lim_{t \to 0} t^{-m}  \det P(t) = 1$ and $\det P(1)=|\det D\Phi(x,v)|$, we have
\begin{align*}
    &|\det D\Phi(x,v)|\leq \lim_{t\to 0}\frac{\cos^n G_1(1)\cos^mG_2(1)\det P(t)}{\cos^n G_1(t)\cos^m G_2(t)}\\
    \begin{split}
    &=\left[\cos\left(\epsilon\sqrt{\frac{k_1(n-1)}{n}}\right)
    -\frac{1}{n}\sinc \left(\epsilon\sqrt{\frac{k_1(n-1)}{n}}\right)({\Delta^{\ac}_\Sigma  \phi(x)+\langle H(x), v\rangle})\right]^n\\
    &\quad\cdot\sinc^m\left(\epsilon\sqrt{\frac{k_2(m-1)}{m}}\right)
    \end{split}\\
    \begin{split}
    &\leq \left[\cos\left(\epsilon\sqrt{\frac{k_1(n-1)}{n}}\right)
    -\frac{1}{n}\sinc \left(\epsilon\sqrt{\frac{k_1(n-1)}{n}}\right)({\Delta^{\ac}_\Sigma  \phi(x)-| H(x)|\diam(M)})\right]^n\\
    &\quad \cdot\sinc^m\left(\epsilon\sqrt{\frac{k_2(m-1)}{m}}\right),
    \end{split}
\end{align*}
where we use that $|v|\leq \diam(M)$.
Combined with the change of variable formula of $D\Phi$ (Line \ref{Jacobian_det_Phi}) and using $A_x\subset \{v\in T^{\perp}_x \Sigma \mid |v|\leq \diam(M)\}$, we have
\begin{align*}
    &\frac{\vol(\Omega)}{\int_\Sigma f^\frac{n}{n-1}} f^\frac{n}{n-1}(x)= \int_{A_x}|\det D\Phi(x,v)|dv\\
    \begin{split}
            &\leq \left[\cos\left(\epsilon\sqrt{\frac{k_1(n-1)}{n}}\right)
    -\frac{1}{n}\sinc \left(\epsilon\sqrt{\frac{k_1(n-1)}{n}}\right)({\Delta^{\ac}_\Sigma  \phi(x)-| H(x)|\diam(M)})\right]^n\\
    &\quad \cdot 
    \sinc^m\left(\epsilon\sqrt{\frac{k_2(m-1)}{m}}\right)|B^m|\diam(M)^m.
    \end{split}
\end{align*}
Now we take the $n$-th root on both sides, multiply by $f$, integrate over $\Sigma$ and use the integration by parts inequality (Lemma \ref{int_by_part_submfd}) with $r=\diam(M)$ to finish the proof.
\end{proof}

\subsection{Manifolds with negative lower bounds on intermediate Ricci curvatures}

We prove Theorem \ref{Intro_interRicci_neg_local} in this subsection, where we assume $\Ric_n\geq nk_1, \Ric_m\geq mk_2$ for some $k_1, k_2<0$ on the ambient complete manifold $M$.
As in Subsection \ref{Subsec_pos_inter_Ricci}, these lower bounds will be used in the Riccati equation (Line \ref{Riccati_eq_submfd}) to give another type of differential inequalities involving the speed of transporting geodesics.
Here we choose the target domain $\Omega$ as a geodesic ball to get an upper bound on the speed.

\begin{proof}[Proof of Theorem \ref{Intro_interRicci_neg_local}]
Again, most of the notations and arguments are from Subsection \ref{Subsec_nonneg_inter_Ricci}.
Consider the optimal transport problem between the following two probability measures:
\begin{equation*}
    \mu=\frac{1}{\int_\Sigma f^\frac{n}{n-1}}f^\frac{n}{n-1} \vol_\Sigma,\;
    \nu=\frac{1}{\vol(\Omega)}\vol_M\llcorner \Omega
\end{equation*}
on $\Sigma$ and $\Omega=B^M_{\frac{r}{2}}(x_0)$, respectively, with the cost function $c(x,\zeta)=\frac{1}{2}d_M^2(x,\zeta)$ on $\Sigma \times \Omega$.
By the generalized McCann's theorem (Theorem \ref{gen_McCann_thm}), we have a $c$-concave function $\phi$ on $\Sigma$, and the map 
\begin{equation*}
    \Phi(x,v):=\exp_x(-\nabla^\Sigma \phi(x)+v)    
\end{equation*}
defined on $A\subset T^\perp \Sigma$ such that $\Phi(A)$ has full measure in $\Omega$.
Since $\Phi(x,v)\in \Omega$, we have
\begin{equation*}
    |\nabla^\Sigma (x)|^2+|v|^2=d^2(x, \Phi(x,v))\leq r^2.    
\end{equation*}
Now we estimate $|\det D\Phi(x,v)|$ for fixed $(x,v)\in A$ by modifying Proposition \ref{Jacobian_est_along_geodesic_submfd}.
Recall the Riccati equation (Line \ref{Riccati_eq_submfd}):
\begin{equation*}
   Q'=-S-Q^2.    
\end{equation*}
By $\Ric_n\geq nk_1, \Ric_m \geq mk_2$ and Cauchy inequality, we get
\begin{equation*}
    \tr Q_1'\leq -nk_1 r^2 -\frac{1}{n} (\tr Q_1)^2,\;
    \tr Q_3'\leq -mk_2r^2 -\frac{1}{m} (\tr Q_3)^2.    
\end{equation*}
Again, we have the same initial conditions for $\tr Q_1$ and $\tr Q_3$ as in Line \ref{init_Q_submfd}, and hence we can find a small number $\tau_0 \in (0,1)$ such that
\begin{equation*}
    \tr Q_1(\tau)<-\Delta^{\ac}_\Sigma \phi(\bar{x}) - \langle H(\bar{x}),\bar{v}\rangle+\sqrt{\tau},\;
    \tr Q_3(\tau)<\frac{2m}{\tau},\;
    -mk_2r^2 -\frac{1}{m} (\tr Q_3(\tau))^2<0    
\end{equation*}
for all $\tau \in (0,\tau_0)$. 
By a standard ODE comparison principle and taking the limit as $\tau\to 0^+$, we conclude
\begin{equation*}
    \frac{d}{dt}\log \det P
    =\tr Q_1(t)+\tr Q_3(t)
    \leq \frac{d}{dt}(n\log \cosh G_1+m\log \sinh G_2),    
\end{equation*}
for $t\in (0,1)$, where 
\begin{equation*}
    G_1(t)=tr\sqrt{-k_1}+\tanh^{-1}\left(\frac{-\Delta^{\ac}_\Sigma \phi(x)-\langle H(x), v\rangle}{rn\sqrt{-k_1}}\right),\,
    G_2(t)=tr\sqrt{-k_2}.    
\end{equation*}
Therefore the following function in $t$ is decreasing:
\begin{equation*}
    \frac{\det P(t)}{\cosh^n G_1(t) \sinh^m G_2(t)}.    
\end{equation*}
Since $\lim_{t \to 0} t^{-m} \, \det P(t) = 1$ and $\det P(1)=|\det D\Phi(x,v)|$, we have
\begin{align*}
    &|\det D\Phi(x,v)|
    \leq \lim_{t\to 0}\frac{\cosh^n G_1(1)\sinh^mG_2(1)\det P(t)}{\cosh^n G_1(t)\sinh^m G_2(t)}\\
    &=\left[\cosh\left(r\sqrt{-k_1}\right)
    -\frac{\sinh(r\sqrt{-k_1})}{rn\sqrt{-k_1}}({\Delta^{\ac}_\Sigma  \phi(x)+\langle H(x), v\rangle})\right]^n
    \left(\frac{\sinh(r\sqrt{-k_2})}{r\sqrt{-k_2}}\right)^m\\
    &\leq \left[\cosh\left(r\sqrt{-k_1}\right)
    -\frac{\sinh(r\sqrt{-k_1})}{rn\sqrt{-k_1}}({\Delta^{\ac}_\Sigma  \phi(x)-|H(x)|r})\right]^n
    \left(\frac{\sinh(r\sqrt{-k_2})}{r\sqrt{-k_2}}\right)^m,
\end{align*}
where we use that $|v|\leq r$.
Combined with the change of variable formula of $D\Phi$ (Line \ref{Jacobian_det_Phi}) and using $A_x\subset \{v\in T^{\perp}_x \Sigma \mid |v|\leq r\}$, we have
\begin{align*}
    &\frac{\vol\left(B_{\frac{r}{2}}(x_0)\right)}{\int_\Sigma f^\frac{n}{n-1}} f^\frac{n}{n-1}(x)
    = \int_{A_x}|\det D\Phi(x,v)|dv\\
    &\leq \left[\cosh\left(r\sqrt{-k_1}\right)
    -\frac{\sinh(r\sqrt{-k_1})}{rn\sqrt{-k_1}}({\Delta^{\ac}_\Sigma  \phi(x)-|H(x)|r})\right]^n 
    \left(\frac{\sinh(r\sqrt{-k_2})}{r\sqrt{-k_2}}\right)^m|B^m|r^m.
\end{align*}
Now we take the $n$-th root on both sides, multiply by $f$, integrate over $\Sigma$ and use the integration by parts inequality (Lemma \ref{int_by_part_submfd}) to finish the proof.
\end{proof}

\appendix
\section{\texorpdfstring{$c$}{c}-concave functions on submanifolds}

    In this appendix, we show that $c$-concave functions on submanifolds are semiconcave.
    This follows essentially from the Hessian upper bound (see \cite{CMS01}, Lemma 3.11) of such functions.
    Recall the notations that $c=\frac{1}{2}d_M^2$ is the cost function defined on $\Sigma\times \Omega$, where $\Sigma\subset M$ is a compact submanifold, and $\Omega\subset M$ is a compact (regular) domain.
    \par
First we prove the following adaption of Lemma 3.12 in \cite{CMS01} under the submanifold setting:
\begin{lemma}[Hessian upper bound for distance squared restricted to $\Sigma$]\label{Hes_bd_d^2_submfd}
Fix $x\in \Sigma$ and $\zeta\in M$.
Let $\gamma$ be a minimal geodesic in $M$ from $x$ to $\zeta$.
Suppose $k<0$ is a lower bound for the sectional curvature along $\gamma$.
Define $b(s)=s\coth s$.
Then for each $u\in T_x\Sigma$, we have
\begin{equation*}
    \limsup_{s\to 0}\frac{d^2_\zeta (\exp^\Sigma_{x}su)+d^2_\zeta(\exp^\Sigma_x -su)-2d^2_\zeta(x)}{s^2}\leq 4b\left(\frac{d_\zeta(x)}{2}\sqrt{-k}\right)+2d_\zeta(x)|\sff(u,u)|.    
\end{equation*}
Here, $\sff$ is the second fundamental form of $\Sigma$.
\end{lemma}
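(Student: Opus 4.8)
The plan is to reduce the assertion to the corresponding estimate on the ambient manifold, i.e.\ to Lemma 3.12 of \cite{CMS01} applied to $M$ \emph{at the point $x$}, and to absorb the discrepancy between $\exp^\Sigma_x$ and $\exp^M_x$ by means of the second fundamental form together with the (ordinary) semiconcavity of $\tfrac12 d^2_\zeta$.

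\medskip\noindent\textbf{Step 1 (writing the $\Sigma$-geodesic in ambient normal coordinates).}
Set $\alpha(s):=\exp^\Sigma_x(su)$. Since $\alpha$ is a geodesic of $\Sigma$, its intrinsic acceleration vanishes, so by the Gauss formula its \emph{ambient} covariant acceleration at $s=0$ equals $\sff(u,u)\in T^\perp_x\Sigma$. Passing to geodesic normal coordinates of $M$ centred at $x$ --- equivalently, writing $V(s):=(\exp^M_x)^{-1}(\alpha(s))\in T_xM$ for small $s$ --- this says $V(0)=0$, $V'(0)=u$, $V''(0)=\sff(u,u)$, hence
\[
V(\pm s)=\pm su+\tfrac{s^2}{2}\,\sff(u,u)+o(s^2),\qquad s\to 0,
\]
(note $\langle u,\sff(u,u)\rangle=0$). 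Put $a_s:=\tfrac{s^2}{2}\sff(u,u)$ and $h:=d^2_\zeta\circ\exp^M_x$. Because $d_\zeta$ is $1$-Lipschitz, $d^2_\zeta$ is locally Lipschitz near $x$ with Lipschitz constant tending to $2d_\zeta(x)$, and $\exp^M_x$ is smooth, so $h$ is locally Lipschitz near $0\in T_xM$; replacing $V(\pm s)$ by $\pm su+a_s$ therefore costs only $o(s^2)$, that is, $d^2_\zeta(\exp^\Sigma_x(\pm su))=h(V(\pm s))=h(\pm su+a_s)+o(s^2)$.

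\medskip\noindent\textbf{Step 2 (peeling off the shift $a_s$ with semiconcavity).}
The function $\tfrac12 d^2_\zeta$ is semiconcave on $M$ (a standard consequence of the Hessian comparison theorem; see \cite[Lemma 3.11]{CMS01}), and in the normal-coordinate chart $\exp^M_x$ this becomes Euclidean semiconcavity, so there is $C>0$ with $h-\tfrac{C}{2}|\cdot|^2$ concave on a ball $B_\rho(0)\subset T_xM$. For $s$ small this yields vectors $P^{\pm}_s\in T_xM$ in the superdifferential of $h$ at $\pm su$, with $\limsup_{s\to0}|P^{\pm}_s|\le 2d_\zeta(x)$, such that
\[
h(\pm su+a_s)\ \le\ h(\pm su)+\langle P^{\pm}_s,a_s\rangle+\tfrac{C}{2}|a_s|^2\ =\ h(\pm su)+\langle P^{\pm}_s,a_s\rangle+O(s^4).
\]
Adding the two inequalities, using Step 1, and subtracting $2d^2_\zeta(x)=2h(0)$, we obtain
\[
d^2_\zeta(\exp^\Sigma_x su)+d^2_\zeta(\exp^\Sigma_x(-su))-2d^2_\zeta(x)\ \le\ \big(h(su)+h(-su)-2h(0)\big)+\langle P^{+}_s+P^{-}_s,\,a_s\rangle+o(s^2),
\]
and since $\langle P^{+}_s+P^{-}_s,a_s\rangle\le \tfrac{s^2}{2}\big(|P^{+}_s|+|P^{-}_s|\big)\,|\sff(u,u)|$, the middle term contributes at most $2d_\zeta(x)|\sff(u,u)|\,s^2+o(s^2)$.

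\medskip\noindent\textbf{Step 3 (the ambient estimate and conclusion).}
By Lemma 3.12 of \cite{CMS01} applied to the ambient manifold $M$, with the same minimizing geodesic $\gamma$ and the same lower bound $k$ along it,
\[
\limsup_{s\to 0}\frac{h(su)+h(-su)-2h(0)}{s^2}=\limsup_{s\to 0}\frac{d^2_\zeta(\exp^M_x su)+d^2_\zeta(\exp^M_x(-su))-2d^2_\zeta(x)}{s^2}\ \le\ 4b\!\left(\tfrac{d_\zeta(x)}{2}\sqrt{-k}\right)|u|^2.
\]
Dividing the last display of Step 2 by $s^2$ and taking $\limsup_{s\to0}$ gives exactly the asserted bound (stated for $|u|=1$, hence valid for all $u$ by degree-two homogeneity in $u$ of both sides). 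The only deep input is the ambient estimate \cite[Lemma 3.12]{CMS01} --- in particular its robustness at the cut locus of $\zeta$ and, when $\zeta$ is conjugate to $x$ along $\gamma$, the usual device of replacing $\zeta$ by an interior point of $\gamma$ and passing to the limit --- but this is precisely what is done there and is untouched by the submanifold modification. The step most in need of care here is the matching of the Gauss-formula expansion of $V$ in Step 1 with the first-order superdifferential estimate in Step 2, so that all error terms are genuinely $o(s^2)$ and the correction lands on exactly $2d_\zeta(x)|\sff(u,u)|$; the remaining manipulations are routine.
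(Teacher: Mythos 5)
Your Steps 1--2 are fine and genuinely different from the paper's argument: instead of transferring the problem to a point where the squared distance is smooth and using the Gauss formula, you expand the $\Sigma$-geodesic in ambient normal coordinates ($V(\pm s)=\pm su+\tfrac{s^2}{2}\sff(u,u)+o(s^2)$) and absorb the normal correction with a superdifferential/semiconcavity estimate, which correctly produces the term $2d_\zeta(x)|\sff(u,u)|$ (note only that the qualitative local semiconcavity of $d^2_\zeta$ on $M$, which you import as a black box, is itself a cut-locus--robust statement of the same kind you are trying to prove, so it should be cited precisely).

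The problem is Step 3. The displayed inequality there is true, but it is not what Lemma 3.12 of \cite{CMS01} gives you at the point $x$, which may lie in the cut locus of $\zeta$ (possibly a conjugate cut point, since $\zeta\in\Omega$ is arbitrary). As that lemma is used in this paper, it furnishes a bound of the form $\Hess^M(d^2_{\eta})(u,u)\le 2b(d_\eta(\cdot)\sqrt{-k})$ at points where $d^2_\eta$ is smooth; the paper therefore never applies it at $(x,\zeta)$, but only at the midpoint $\tilde\zeta=\gamma(l/2)$, where $x\notin\cut(\tilde\zeta)$ is automatic because $\gamma$ is minimizing, and the passage from that to a second-difference bound at $(x,\zeta)$ is exactly the triangle-inequality doubling $d^2_\zeta(w)\le 2(l/2)^2+2d^2_{\tilde\zeta}(w)$, with equality at $w=x$ --- this is where both the factor $4$ and the half-distance in $b\bigl(\tfrac{d_\zeta(x)}{2}\sqrt{-k}\bigr)$ come from. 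In other words, the constant you attribute to ``Lemma 3.12 applied at $x$'' is the output of the paper's key device, not an input you may quote; waving at ``the usual device of replacing $\zeta$ by an interior point of $\gamma$'' delegates precisely the nontrivial step. The gap is small and fillable: either verify that the cited lemma is really stated in the limsup form at arbitrary points (in which case it gives the stronger constant $2b(d_\zeta(x)\sqrt{-k})|u|^2$ and your Step 3 follows a fortiori), or insert the midpoint comparison above and apply the Hessian bound to $d^2_{\tilde\zeta}$ at distance $l/2$; with that insertion your Steps 1--2 complete a correct alternative proof of the lemma.
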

\begin{proof}
Let $l=d_\zeta(x)$.
Assume that $\gamma:[0,l]\to M$ is parametrized by arc length.
Let $\tilde{\zeta}=\gamma(\frac{l}{2})$ be the middle point of $\gamma$. 
By triangle inequality, we have
\begin{equation*}
    d_\zeta^2(\exp^\Sigma_x su)\leq (d(\zeta, \tilde{\zeta})+d(\tilde{\zeta}, \exp^\Sigma_x su))^2\leq  2\left(\frac{l}{2}\right)^2+2d^2_{\tilde{\zeta}}(\exp^\Sigma_x su),    
\end{equation*}
and a similar bound holds for $d^2_\zeta (\exp^\Sigma_x -su)$.
Hence we have
\begin{equation*}
    \frac{d^2_\zeta (\exp^\Sigma_{x}su)+d^2_\zeta(\exp^\Sigma_x -su)-2d^2_\zeta(x)}{s^2}
    \leq 2\frac{d^2_{\tilde{\zeta}}(\exp^\Sigma_x su)+d^2_{\tilde{\zeta}}(\exp^\Sigma_x -su)-2d^2_{\tilde{\zeta}}(x)}{s^2}.    
\end{equation*}
Since $\gamma$ is minimizing, $x$ is not in the cut locus of the middle point $\tilde{\zeta}$.
Thus as $s\to 0$ the limit exists on the right hand side, and we have
\begin{align*}
    \limsup_{s\to 0}    \frac{d^2_\zeta (\exp^\Sigma_{x}su)+d^2_\zeta(\exp^\Sigma_x -su)-2d^2_\zeta(x)}{s^2}
    &\leq 2 \Hess^\Sigma_x (d^2_{\tilde{\zeta}}\big|_\Sigma)(u,u)\\
    &=2\left(\Hess^M_x (d^2_{\tilde{\zeta}})(u,u)+\langle \nabla (d^2_{\tilde{\zeta}})(x), \sff(u,u)\rangle\right).
\end{align*}
Now we apply Lemma 3.12 in \cite{CMS01} to $\tilde{\zeta}$ and obtain
\begin{equation*}
    \Hess^M_x (d^2_{\tilde{\zeta}})(u,u)\leq 2b\left(\frac{l}{2}\sqrt{-k}\right).    
\end{equation*}
The lemma then follows from Cauchy inequality and that $\left|\nabla (d^2_{\tilde{\zeta}})(x)\right|=2d_{\tilde{\zeta}}(x)=l$.
\end{proof}

We now prove the main result in this appendix, which can be viewed as the submanifold version of Proposition 3.14 in \cite{CMS01}:
\begin{proposition}[$c$-concave functions on $\Sigma$ are semiconcave]\label{submfd_c_concave_semiconcave}
Let $\Sigma\subset M$ be a compact submanifold, and let $\Omega\subset M$ be a compact domain. 
Let $\phi$ be a $c$-concave function on $\Sigma$ (w.r.t the pair $(\Sigma, \Omega)$).
Then $\phi$ is semiconcave on $\Sigma$.
\end{proposition}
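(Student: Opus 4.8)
The plan is to write $\phi$ as an infimum of the functions $x\mapsto\frac12 d_\zeta^2\big|_\Sigma(x)-\psi(\zeta)$ over $\zeta\in\Omega$, to observe via Lemma \ref{Hes_bd_d^2_submfd} that each of these carries a \emph{uniform} quadratic upper bound on its second variation along geodesics of $\Sigma$, and then to note that such a bound survives the infimum --- which is exactly the assertion of semiconcavity (\cite{CMS01}, Definition 3.8). First I would dispose of degenerate cases: since $\phi=\psi^{c_+}$, if $\psi$ is nowhere finite then $\phi\equiv+\infty$ or $\phi\equiv-\infty$ and there is nothing to prove, so assume $\psi(\zeta_0)\in\bb{R}$ for some $\zeta_0\in\Omega$, whence $\phi(x)\le\frac12 d^2(x,\zeta_0)-\psi(\zeta_0)<\infty$ for every $x$. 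Being an infimum of the functions $x\mapsto\frac12 d_\zeta^2(x)-\psi(\zeta)$, which are uniformly Lipschitz on the compact set $\Sigma$ (each $d_\zeta$ is $1$-Lipschitz and bounded by $D:=\max_{\Sigma\times\Omega}d<\infty$), $\phi$ is then either $\equiv-\infty$ or real-valued and locally Lipschitz; we may assume the latter and write
\[
\phi(x)=\inf_{\zeta\in\Omega}\Big(\tfrac12 d_\zeta^2\big|_\Sigma(x)-\psi(\zeta)\Big),\qquad x\in\Sigma.
\]

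Next I would extract the uniform constant. The set $\cal{K}$ of points lying on some minimal geodesic of $M$ from a point of $\Sigma$ to a point of $\Omega$ is bounded --- every such point is within distance $D$ of $\Sigma$ --- and closed, since by Arzel\`a--Ascoli a limit of such minimal geodesics, with endpoints converging in $\Sigma\times\Omega$, is again one; hence $\cal{K}$ is compact by Hopf--Rinow. Fix $k<0$ with $\Sec\ge k$ on $\cal{K}$, and set $\Lambda:=\max\{|\sff(u,u)|:x\in\Sigma,\ u\in T_x\Sigma,\ |u|=1\}<\infty$. Since $b(s)=s\coth s$ is increasing on $(0,\infty)$ with $\lim_{s\to0^+}b(s)=1$, Lemma \ref{Hes_bd_d^2_submfd} gives, for every $\zeta\in\Omega$, every $x\in\Sigma$, and every unit $u\in T_x\Sigma$,
\[
\limsup_{s\to0}\frac{d_\zeta^2(\exp^\Sigma_x(su))+d_\zeta^2(\exp^\Sigma_x(-su))-2d_\zeta^2(x)}{s^2}\ \le\ 4\,b\!\left(\tfrac{D}{2}\sqrt{-k}\right)+2D\Lambda\ =:\ 2C_0,
\]
a constant depending only on $\cal{K}$. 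Writing $g_\zeta:=\frac12 d_\zeta^2$ and using the degree-two homogeneity of the left-hand side in $u$, the same estimate yields
\[
\limsup_{s\to0}s^{-2}\big(g_\zeta(\exp^\Sigma_x(su))+g_\zeta(\exp^\Sigma_x(-su))-2g_\zeta(x)\big)\ \le\ C_0\,|u|^2\qquad\text{for all }u\in T_x\Sigma.
\]

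Finally I would conclude. Fix $x_0\in\Sigma$ and a neighborhood $U$ of $x_0$ in $\Sigma$ small enough that semiconcavity on $U$ may be tested along geodesic segments contained in $U$. Given a unit-speed geodesic $\gamma\colon(-\delta,\delta)\to U$ of $\Sigma$ and $\zeta\in\Omega$, the function $t\mapsto g_\zeta(\gamma(t))-\psi(\zeta)-\tfrac{C_0}{2}t^2$ is continuous and, applying the bound above at $\gamma(t)$ with $u=\dot\gamma(t)$, has nonpositive upper second symmetric difference quotient at every $t$; hence it is concave, by the classical one-dimensional fact that a continuous function with this property is concave (the same principle underlying \cite{CMS01}, Lemma 3.11). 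Taking the infimum over $\zeta\in\Omega$ --- finite since $\phi$ is real-valued --- shows that $t\mapsto\phi(\gamma(t))-\tfrac{C_0}{2}t^2$ is concave. As $\gamma$ is an arbitrary unit-speed geodesic in $U$ and $C_0$ is independent of it, $\phi$ satisfies the definition of semiconcavity on $U$; since $x_0$ was arbitrary, $\phi$ is semiconcave on $\Sigma$. At boundary points $x_0\in\pt\Sigma$ one argues identically with geodesic segments of $\Sigma$ lying in $U$, or after extending $\Sigma$ across $\pt\Sigma$ by a collar.

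The main obstacle is the middle step: one must guarantee that the curvature lower bound $k$, the diameter $D$, and the second-fundamental-form bound $\Lambda$ can all be chosen uniformly, which is exactly why one passes to the compact set $\cal{K}$ of the relevant minimal geodesics rather than using compactness of $\Sigma$ and $\Omega$ in isolation, and why the monotonicity of $s\coth s$ is needed to collapse the pointwise estimate of Lemma \ref{Hes_bd_d^2_submfd} into a single constant $C_0$. Granting that, the one-dimensional concavity criterion and the stability of the bound under infima are routine.
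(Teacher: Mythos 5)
Your proposal is correct, and it rests on the same two pillars as the paper's own proof: the Hessian-type bound of Lemma \ref{Hes_bd_d^2_submfd} and the compactness of $\Sigma$ and $\Omega$ (plus completeness of $M$) to make the distance, curvature and second-fundamental-form constants uniform. The difference is in how you pass from that bound to semiconcavity of $\phi$. The paper fixes $x$, uses compactness of $\Omega$ and $\phi=\phi^{c_+c_+}$ to produce a point $\zeta\in\partial^{c_+}\phi(x)$ so that $\frac{1}{2}d_\zeta^2\big|_\Sigma-\phi^{c_+}(\zeta)$ touches $\phi$ from above at $x$; this transfers the second-difference-quotient bound from $d_\zeta^2\big|_\Sigma$ directly to $\phi$ at $x$, and then Lemma 3.11 of \cite{CMS01} is invoked as a black box to conclude semiconcavity. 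You instead bound the second symmetric difference quotients of \emph{every} member of the infimum family $\frac{1}{2}d_\zeta^2\big|_\Sigma-\psi(\zeta)$ by one constant $C_0$, turn each into a function that is concave along geodesics after subtracting $\frac{C_0}{2}t^2$ (via the classical one-dimensional criterion), and use stability of concavity under infima. Your route is slightly longer but self-contained at the last step: it does not need the attainment of the infimum (i.e.\ nonemptiness of $\partial^{c_+}\phi(x)$) nor the cited lemma, though in effect you re-prove the content of Lemma 3.11; the paper's route is shorter and stays closer to \cite{CMS01}. Your extra care with the degenerate values of $\psi$ and with boundary points of $\Sigma$ is sound and, if anything, more explicit than the paper.
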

\begin{proof}
For $x\in \Sigma$, since the function $\frac{1}{2}d^2_x-\phi^{c_+}$ is continuous on the compact set $\Omega$, there is some $\zeta\in \Omega$ such that 
\begin{equation*}
    \frac{1}{2}d^2_x(\zeta)-\phi^{c_+}(\zeta)=\inf_{\eta\in \Omega}\frac{1}{2}d^2(x,\eta)-\phi^{c_+}(\eta)=\phi^{c_+c_+}\phi(x)=\phi(x).    
\end{equation*}
Hence $\zeta\in \pt^{c_+}\phi(x)$.
Then for any $u\in T_x\Sigma$ of small norm, we have
\begin{equation*}
    \phi(\exp^\Sigma_xu)\leq \phi(x)+\frac{1}{2}d^2_{\zeta}(\exp^\Sigma_x u)-\frac{1}{2}d^2_\zeta (x).    
\end{equation*}
Therefore by Lemma \ref{Hes_bd_d^2_submfd}, we have for $u\in T_x \Sigma$ of unit length,
\begin{align*}
    &\limsup_{s\to 0}\frac{\phi(\exp^\Sigma_xsu)+\phi(\exp^\Sigma_x -su)-2\phi(x)}{s^2}\\
    \leq&\frac{1}{2}\limsup_{s\to 0}\frac{d^2_\zeta (\exp^\Sigma_{x}su)+d^2_\zeta(\exp^\Sigma_x -su)-2d^2_\zeta(x)}{s^2}\\
    \leq& 2b\left(\frac{l}{2}\sqrt{-k}\right)+l|\sff(u,u)|,
\end{align*}
where the function $b$ is defined in Lemma \ref{Hes_bd_d^2_submfd} and $l=d(x,\zeta)$.
Now since both $\Sigma$ and $\Omega$ are compact, $l$ admits a universal upper bound, and we can choose a universal lower bound $k$ of the sectional curvature along any minimal geodesic starting in $\Sigma$ and ending in  $\Omega$.
In addition, the second fundamental form $\sff$ admits a universal upper bound since $\Sigma$ is compact.
Hence we have
\begin{equation*}
    \limsup_{s\to 0}\frac{\phi(\exp^\Sigma_xsu)+\phi(\exp^\Sigma_x -su)-2\phi(x)}{s^2}\leq C    
\end{equation*}
for some constant $C$ independent of $x\in \Sigma$ and $u\in T_x \Sigma$.
By Lemma 3.11 in \cite{CMS01}, we conclude that $\phi$ is semiconcave on $\Sigma$.
\end{proof}

\printbibliography

@article {HK78,
    AUTHOR = {Heintze, Ernst and Karcher, Hermann},
     TITLE = {A general comparison theorem with applications to volume
              estimates for submanifolds},
   JOURNAL = {Ann. Sci. \'{E}cole Norm. Sup. (4)},
  FJOURNAL = {Annales Scientifiques de l'\'{E}cole Normale Sup\'{e}rieure. Quatri\`eme
              S\'{e}rie},
    VOLUME = {11},
      YEAR = {1978},
    NUMBER = {4},
     PAGES = {451--470},
      ISSN = {0012-9593},
   MRCLASS = {53C40 (58E10)},
  MRNUMBER = {533065},
MRREVIEWER = {Hubert Gollek},
       URL = {http://www.numdam.org/item?id=ASENS_1978_4_11_4_451_0},
}

@misc {Bre22,
  doi = {10.48550/ARXIV.2009.13717},
  
  url = {https://arxiv.org/abs/2009.13717},
  
  author = {Brendle, Simon},
  
  title = {Sobolev inequalities in manifolds with nonnegative curvature},
  
  publisher = {arXiv},
  
  year = {2022},
  note={To appear in Comm. Pure. Appl. Math}
}

@article {Bre21,
    AUTHOR = {Brendle, Simon},
     TITLE = {The isoperimetric inequality for a minimal submanifold in
              {E}uclidean space},
   JOURNAL = {J. Amer. Math. Soc.},
  FJOURNAL = {Journal of the American Mathematical Society},
    VOLUME = {34},
      YEAR = {2021},
    NUMBER = {2},
     PAGES = {595--603},
      ISSN = {0894-0347},
   MRCLASS = {53A10 (53A07)},
  MRNUMBER = {4280868},
MRREVIEWER = {Jianquan Ge},
       DOI = {10.1090/jams/969},
       URL = {https://doi.org/10.1090/jams/969},
}

@misc{BE22,
  doi = {10.48550/ARXIV.2205.10284},
  
  url = {https://arxiv.org/abs/2205.10284},
  
  author = {Simon Brendle and Michael Eichmair},
  
  title = {Proof of the Michael-Simon-Sobolev inequality using optimal transport},
  
  publisher = {arXiv},
  
  year = {2022},
  note={preprint},
}

@incollection {AG13,
    AUTHOR = {Ambrosio, Luigi and Gigli, Nicola},
     TITLE = {A user's guide to optimal transport},
 BOOKTITLE = {Modelling and optimisation of flows on networks},
    SERIES = {Lecture Notes in Math.},
    VOLUME = {2062},
     PAGES = {1--155},
 PUBLISHER = {Springer, Heidelberg},
      YEAR = {2013},
   MRCLASS = {49Q20 (35R70 49-01)},
  MRNUMBER = {3050280},
MRREVIEWER = {Luca Granieri},
       DOI = {10.1007/978-3-642-32160-3\_1},
       URL = {https://doi.org/10.1007/978-3-642-32160-3_1},
}

@article {McC01,
    AUTHOR = {McCann, Robert J.},
     TITLE = {Polar factorization of maps on {R}iemannian manifolds},
   JOURNAL = {Geom. Funct. Anal.},
  FJOURNAL = {Geometric and Functional Analysis},
    VOLUME = {11},
      YEAR = {2001},
    NUMBER = {3},
     PAGES = {589--608},
      ISSN = {1016-443X},
   MRCLASS = {58E15 (46N10 49Q20 53C20)},
  MRNUMBER = {1844080},
MRREVIEWER = {Lucio Renato Berrone},
       DOI = {10.1007/PL00001679},
       URL = {https://doi.org/10.1007/PL00001679},
}

@article {CMS01,
    AUTHOR = {Cordero-Erausquin, Dario and McCann, Robert J. and
              Schmuckenschl\"{a}ger, Michael},
     TITLE = {A {R}iemannian interpolation inequality \`a la {B}orell,
              {B}rascamp and {L}ieb},
   JOURNAL = {Invent. Math.},
  FJOURNAL = {Inventiones Mathematicae},
    VOLUME = {146},
      YEAR = {2001},
    NUMBER = {2},
     PAGES = {219--257},
      ISSN = {0020-9910},
   MRCLASS = {58E35 (28C99 60E15)},
  MRNUMBER = {1865396},
MRREVIEWER = {C\'{e}dric Villani},
       DOI = {10.1007/s002220100160},
       URL = {https://doi.org/10.1007/s002220100160},
}

@book {Vil09,
    AUTHOR = {Villani, C\'{e}dric},
     TITLE = {Optimal transport},
    SERIES = {Grundlehren der mathematischen Wissenschaften [Fundamental
              Principles of Mathematical Sciences]},
    VOLUME = {338},
      NOTE = {Old and new},
 PUBLISHER = {Springer-Verlag, Berlin},
      YEAR = {2009},
     PAGES = {xxii+973},
      ISBN = {978-3-540-71049-3},
   MRCLASS = {49-02 (28A75 37J50 49Q20 53C23 58E30)},
  MRNUMBER = {2459454},
MRREVIEWER = {Dario Cordero-Erausquin},
       DOI = {10.1007/978-3-540-71050-9},
       URL = {https://doi.org/10.1007/978-3-540-71050-9},
}

@article {KM18,
    AUTHOR = {Ketterer, Christian and Mondino, Andrea},
     TITLE = {Sectional and intermediate {R}icci curvature lower bounds via
              optimal transport},
   JOURNAL = {Adv. Math.},
  FJOURNAL = {Advances in Mathematics},
    VOLUME = {329},
      YEAR = {2018},
     PAGES = {781--818},
      ISSN = {0001-8708},
   MRCLASS = {53C21 (49Q20)},
  MRNUMBER = {3783428},
MRREVIEWER = {Martin Kell},
       DOI = {10.1016/j.aim.2018.01.024},
       URL = {https://doi.org/10.1016/j.aim.2018.01.024},
}

@article {Pas12,
    AUTHOR = {Pass, Brendan},
     TITLE = {Regularity of optimal transportation between spaces with
              different dimensions},
   JOURNAL = {Math. Res. Lett.},
  FJOURNAL = {Mathematical Research Letters},
    VOLUME = {19},
      YEAR = {2012},
    NUMBER = {2},
     PAGES = {291--307},
      ISSN = {1073-2780},
   MRCLASS = {49Q20 (35B65 49N60 90B06)},
  MRNUMBER = {2955762},
MRREVIEWER = {Flavia-Corina Mitroi-Symeonidis},
       DOI = {10.4310/MRL.2012.v19.n2.a3},
       URL = {https://doi.org/10.4310/MRL.2012.v19.n2.a3},
}

@article {MP20,
    AUTHOR = {McCann, Robert J. and Pass, Brendan},
     TITLE = {Optimal transportation between unequal dimensions},
   JOURNAL = {Arch. Ration. Mech. Anal.},
  FJOURNAL = {Archive for Rational Mechanics and Analysis},
    VOLUME = {238},
      YEAR = {2020},
    NUMBER = {3},
     PAGES = {1475--1520},
      ISSN = {0003-9527},
   MRCLASS = {49Q22 (49N60)},
  MRNUMBER = {4160804},
MRREVIEWER = {Luca Granieri},
       DOI = {10.1007/s00205-020-01569-5},
       URL = {https://doi.org/10.1007/s00205-020-01569-5},
}

@book {AGS05,
    AUTHOR = {Ambrosio, Luigi and Gigli, Nicola and Savar\'{e}, Giuseppe},
     TITLE = {Gradient flows in metric spaces and in the space of
              probability measures},
    SERIES = {Lectures in Mathematics ETH Z\"{u}rich},
 PUBLISHER = {Birkh\"{a}user Verlag, Basel},
      YEAR = {2005},
     PAGES = {viii+333},
      ISBN = {978-3-7643-2428-5},
   MRCLASS = {49-02 (28A33 35K55 35K90 49Q20 60B10)},
  MRNUMBER = {2129498},
}

@book {San15,
    AUTHOR = {Santambrogio, Filippo},
     TITLE = {Optimal transport for applied mathematicians},
    SERIES = {Progress in Nonlinear Differential Equations and their
              Applications},
    VOLUME = {87},
      NOTE = {Calculus of variations, PDEs, and modeling},
 PUBLISHER = {Birkh\"{a}user/Springer, Cham},
      YEAR = {2015},
     PAGES = {xxvii+353},
      ISBN = {978-3-319-20827-5},
   MRCLASS = {49-02 (35J96 49J45 49M29 58E50 90C05 90C48 91B02)},
  MRNUMBER = {3409718},
MRREVIEWER = {Luigi De Pascale},
       DOI = {10.1007/978-3-319-20828-2},
       URL = {https://doi.org/10.1007/978-3-319-20828-2},
}

@book {AFP00,
    AUTHOR = {Ambrosio, Luigi and Fusco, Nicola and Pallara, Diego},
     TITLE = {Functions of bounded variation and free discontinuity
              problems},
    SERIES = {Oxford Mathematical Monographs},
 PUBLISHER = {The Clarendon Press, Oxford University Press, New York},
      YEAR = {2000},
     PAGES = {xviii+434},
      ISBN = {0-19-850245-1},
   MRCLASS = {49-02 (49J45 49K10 49Qxx)},
  MRNUMBER = {1857292},
MRREVIEWER = {J. E. Brothers},
}

@book {Pet16,
    AUTHOR = {Petersen, Peter},
     TITLE = {Riemannian geometry},
    SERIES = {Graduate Texts in Mathematics},
    VOLUME = {171},
   EDITION = {Third},
 PUBLISHER = {Springer, Cham},
      YEAR = {2016},
     PAGES = {xviii+499},
      ISBN = {978-3-319-26652-7},
   MRCLASS = {53-01 (53C20 53C21 53C23)},
  MRNUMBER = {3469435},
       DOI = {10.1007/978-3-319-26654-1},
       URL = {https://doi.org/10.1007/978-3-319-26654-1},
}

@article {CM17,
    AUTHOR = {Cavalletti, Fabio and Mondino, Andrea},
     TITLE = {Sharp and rigid isoperimetric inequalities in metric-measure
              spaces with lower {R}icci curvature bounds},
   JOURNAL = {Invent. Math.},
  FJOURNAL = {Inventiones Mathematicae},
    VOLUME = {208},
      YEAR = {2017},
    NUMBER = {3},
     PAGES = {803--849},
      ISSN = {0020-9910},
   MRCLASS = {53C23 (49Q05 53C21)},
  MRNUMBER = {3648975},
MRREVIEWER = {Renjin Jiang},
       DOI = {10.1007/s00222-016-0700-6},
       URL = {https://doi.org/10.1007/s00222-016-0700-6},
}

@article {CNV04,
    AUTHOR = {Cordero-Erausquin, D. and Nazaret, B. and Villani, C.},
     TITLE = {A mass-transportation approach to sharp {S}obolev and
              {G}agliardo-{N}irenberg inequalities},
   JOURNAL = {Adv. Math.},
  FJOURNAL = {Advances in Mathematics},
    VOLUME = {182},
      YEAR = {2004},
    NUMBER = {2},
     PAGES = {307--332},
      ISSN = {0001-8708},
   MRCLASS = {26D15 (46E35)},
  MRNUMBER = {2032031},
MRREVIEWER = {Olivier Druet},
       DOI = {10.1016/S0001-8708(03)00080-X},
       URL = {https://doi.org/10.1016/S0001-8708(03)00080-X},
}

@article {Cas10,
    AUTHOR = {Castillon, Philippe},
     TITLE = {Submanifolds, isoperimetric inequalities and optimal
              transportation},
   JOURNAL = {J. Funct. Anal.},
  FJOURNAL = {Journal of Functional Analysis},
    VOLUME = {259},
      YEAR = {2010},
    NUMBER = {1},
     PAGES = {79--103},
      ISSN = {0022-1236},
   MRCLASS = {49Q20 (46E35 53C42)},
  MRNUMBER = {2610380},
MRREVIEWER = {Luca Granieri},
       DOI = {10.1016/j.jfa.2010.03.001},
       URL = {https://doi.org/10.1016/j.jfa.2010.03.001},
}

@article {CRS16,
    AUTHOR = {Cabr\'{e}, Xavier and Ros-Oton, Xavier and Serra, Joaquim},
     TITLE = {Sharp isoperimetric inequalities via the {ABP} method},
   JOURNAL = {J. Eur. Math. Soc. (JEMS)},
  FJOURNAL = {Journal of the European Mathematical Society (JEMS)},
    VOLUME = {18},
      YEAR = {2016},
    NUMBER = {12},
     PAGES = {2971--2998},
      ISSN = {1435-9855},
   MRCLASS = {49Q05 (28A75 35A23)},
  MRNUMBER = {3576542},
       DOI = {10.4171/JEMS/659},
       URL = {https://doi.org/10.4171/JEMS/659},
}

@article {WZ13,
    AUTHOR = {Wang, Yu and Zhang, Xiangwen},
     TITLE = {An {A}lexandroff-{B}akelman-{P}ucci estimate on {R}iemannian
              manifolds},
   JOURNAL = {Adv. Math.},
  FJOURNAL = {Advances in Mathematics},
    VOLUME = {232},
      YEAR = {2013},
     PAGES = {499--512},
      ISSN = {0001-8708},
   MRCLASS = {53C21 (35B45 35B65 35R01)},
  MRNUMBER = {2989991},
MRREVIEWER = {Xiang Gao},
       DOI = {10.1016/j.aim.2012.09.009},
       URL = {https://doi.org/10.1016/j.aim.2012.09.009},
}

@article {XZ17,
    AUTHOR = {Xia, Chao and Zhang, Xiangwen},
     TITLE = {A{BP} estimate and geometric inequalities},
   JOURNAL = {Comm. Anal. Geom.},
  FJOURNAL = {Communications in Analysis and Geometry},
    VOLUME = {25},
      YEAR = {2017},
    NUMBER = {3},
     PAGES = {685--708},
      ISSN = {1019-8385},
   MRCLASS = {53C21 (52A40 53C42)},
  MRNUMBER = {3702549},
MRREVIEWER = {B. S. Rubin},
       DOI = {10.4310/CAG.2017.v25.n3.a6},
       URL = {https://doi.org/10.4310/CAG.2017.v25.n3.a6},
}

@article {FMP10,
    AUTHOR = {Figalli, A. and Maggi, F. and Pratelli, A.},
     TITLE = {A mass transportation approach to quantitative isoperimetric
              inequalities},
   JOURNAL = {Invent. Math.},
  FJOURNAL = {Inventiones Mathematicae},
    VOLUME = {182},
      YEAR = {2010},
    NUMBER = {1},
     PAGES = {167--211},
      ISSN = {0020-9910},
   MRCLASS = {49Q20 (35J96)},
  MRNUMBER = {2672283},
MRREVIEWER = {Lorenzo Brasco},
       DOI = {10.1007/s00222-010-0261-z},
       URL = {https://doi.org/10.1007/s00222-010-0261-z},
}

@article {Tru94,
    AUTHOR = {Trudinger, Neil S.},
     TITLE = {Isoperimetric inequalities for quermassintegrals},
   JOURNAL = {Ann. Inst. H. Poincar\'{e} C Anal. Non Lin\'{e}aire},
  FJOURNAL = {Annales de l'Institut Henri Poincar\'{e} C. Analyse Non Lin\'{e}aire},
    VOLUME = {11},
      YEAR = {1994},
    NUMBER = {4},
     PAGES = {411--425},
      ISSN = {0294-1449},
   MRCLASS = {52A40 (35J60 52A39)},
  MRNUMBER = {1287239},
MRREVIEWER = {Jane R. Sangwine-Yager},
       DOI = {10.1016/S0294-1449(16)30181-0},
       URL = {https://doi.org/10.1016/S0294-1449(16)30181-0},
}

@article {Cab08,
    AUTHOR = {Cabr\'{e}, Xavier},
     TITLE = {Elliptic {PDE}'s in probability and geometry: symmetry and
              regularity of solutions},
   JOURNAL = {Discrete Contin. Dyn. Syst.},
  FJOURNAL = {Discrete and Continuous Dynamical Systems. Series A},
    VOLUME = {20},
      YEAR = {2008},
    NUMBER = {3},
     PAGES = {425--457},
      ISSN = {1078-0947},
   MRCLASS = {35J60 (35B65 58J05 60F05 60G40)},
  MRNUMBER = {2373200},
MRREVIEWER = {Nicolas Saintier},
       DOI = {10.3934/dcds.2008.20.425},
       URL = {https://doi.org/10.3934/dcds.2008.20.425},
}

@article {AFM20,
    AUTHOR = {Agostiniani, Virginia and Fogagnolo, Mattia and Mazzieri,
              Lorenzo},
     TITLE = {Sharp geometric inequalities for closed hypersurfaces in
              manifolds with nonnegative {R}icci curvature},
   JOURNAL = {Invent. Math.},
  FJOURNAL = {Inventiones Mathematicae},
    VOLUME = {222},
      YEAR = {2020},
    NUMBER = {3},
     PAGES = {1033--1101},
      ISSN = {0020-9910},
   MRCLASS = {53C40 (53C21 53C42)},
  MRNUMBER = {4169055},
MRREVIEWER = {Yun Tao Zhang},
       DOI = {10.1007/s00222-020-00985-4},
       URL = {https://doi.org/10.1007/s00222-020-00985-4},
}

@article {GM00,
    AUTHOR = {Gangbo, Wilfrid and McCann, Robert J.},
     TITLE = {Shape recognition via {W}asserstein distance},
   JOURNAL = {Quart. Appl. Math.},
  FJOURNAL = {Quarterly of Applied Mathematics},
    VOLUME = {58},
      YEAR = {2000},
    NUMBER = {4},
     PAGES = {705--737},
      ISSN = {0033-569X},
   MRCLASS = {94A08 (28A35 49Q20)},
  MRNUMBER = {1788425},
       DOI = {10.1090/qam/1788425},
       URL = {https://doi.org/10.1090/qam/1788425},
}

@misc{BK22,
  doi = {10.48550/ARXIV.2012.11862},
  
  url = {https://arxiv.org/abs/2012.11862},
  
  author = {Balogh, Zolt\'an M. and Krist\'aly, Alexandru},
  
  keywords = {Differential Geometry (math.DG), Analysis of PDEs (math.AP), FOS: Mathematics, FOS: Mathematics},
  
  title = {Sharp isoperimetric and Sobolev inequalities in spaces with nonnegative Ricci curvature},
  
  publisher = {arXiv},
  
  year = {2022},
  
  copyright = {arXiv.org perpetual, non-exclusive license},
  note={To appear in Math. Ann.}
}

@article {Bre91,
    AUTHOR = {Brenier, Yann},
     TITLE = {Polar factorization and monotone rearrangement of
              vector-valued functions},
   JOURNAL = {Comm. Pure Appl. Math.},
  FJOURNAL = {Communications on Pure and Applied Mathematics},
    VOLUME = {44},
      YEAR = {1991},
    NUMBER = {4},
     PAGES = {375--417},
      ISSN = {0010-3640},
   MRCLASS = {46E40 (35Q99 46E99 49Q99)},
  MRNUMBER = {1100809},
MRREVIEWER = {Robert McOwen},
       DOI = {10.1002/cpa.3160440402},
       URL = {https://doi.org/10.1002/cpa.3160440402},
}

@article {Cha20,
    AUTHOR = {Chahine, Yousef K.},
     TITLE = {Volume estimates for tubes around submanifolds using integral
              curvature bounds},
   JOURNAL = {J. Geom. Anal.},
  FJOURNAL = {Journal of Geometric Analysis},
    VOLUME = {30},
      YEAR = {2020},
    NUMBER = {4},
     PAGES = {4071--4091},
      ISSN = {1050-6926},
   MRCLASS = {53C20},
  MRNUMBER = {4167274},
MRREVIEWER = {Stefano Pigola},
       DOI = {10.1007/s12220-019-00230-2},
       URL = {https://doi.org/10.1007/s12220-019-00230-2},
}

@misc{CZ22,
  doi = {10.48550/ARXIV.2205.12582},
  
  url = {https://arxiv.org/abs/2205.12582},
  
  author = {Cui, Jingshi and Zhao, Peibiao},
  
  keywords = {Differential Geometry (math.DG), FOS: Mathematics, FOS: Mathematics},
  
  title = {Locally constrained flows and sharp Michael-Simon inequalities in hyperbolic space},
  
  publisher = {arXiv},
  
  year = {2022},
  
  copyright = {arXiv.org perpetual, non-exclusive license},
  note={preprint},
}

@article {Rus96,
    AUTHOR = {R\"{u}schendorf, Ludger},
     TITLE = {On {$c$}-optimal random variables},
   JOURNAL = {Statist. Probab. Lett.},
  FJOURNAL = {Statistics \& Probability Letters},
    VOLUME = {27},
      YEAR = {1996},
    NUMBER = {3},
     PAGES = {267--270},
      ISSN = {0167-7152},
   MRCLASS = {62H20 (60E05)},
  MRNUMBER = {1395577},
MRREVIEWER = {Ramesh C. Gupta},
       DOI = {10.1016/0167-7152(95)00078-X},
       URL = {https://doi.org/10.1016/0167-7152(95)00078-X},
}

@article {FM22,
    AUTHOR = {Fogagnolo, Mattia and Mazzieri, Lorenzo},
     TITLE = {Minimising hulls, {$p$}-capacity and isoperimetric inequality
              on complete {R}iemannian manifolds},
   JOURNAL = {J. Funct. Anal.},
  FJOURNAL = {Journal of Functional Analysis},
    VOLUME = {283},
      YEAR = {2022},
    NUMBER = {9},
     PAGES = {Paper No. 109638, 49},
      ISSN = {0022-1236},
   MRCLASS = {49Q10 (31C15 49J40 53C21 53E10)},
  MRNUMBER = {4459004},
       DOI = {10.1016/j.jfa.2022.109638},
       URL = {https://doi.org/10.1016/j.jfa.2022.109638},
}

@misc{CM22,
  doi = {10.48550/ARXIV.2207.03423},
  
  url = {https://arxiv.org/abs/2207.03423},
  
  author = {Cavalletti, Fabio and Manini, Davide},
  
  keywords = {Metric Geometry (math.MG), FOS: Mathematics, FOS: Mathematics},
  
  title = {Rigidities of Isoperimetric inequality under nonnegative Ricci curvature},
  
  publisher = {arXiv},
  
  year = {2022},
  
  copyright = {arXiv.org perpetual, non-exclusive license}
}

@misc{APPS22,
  doi = {10.48550/ARXIV.2201.04916},
  
  url = {https://arxiv.org/abs/2201.04916},
  
  author = {Antonelli, Gioacchino and Pasqualetto, Enrico and Pozzetta, Marco and Semola, Daniele},
  
  keywords = {Differential Geometry (math.DG), Metric Geometry (math.MG), FOS: Mathematics, FOS: Mathematics},
  
  title = {Sharp isoperimetric comparison on non collapsed spaces with lower Ricci bounds},
  
  publisher = {arXiv},
  
  year = {2022},
  
  copyright = {Creative Commons Attribution 4.0 International}
}

@article {GM96,
    AUTHOR = {Gangbo, Wilfrid and McCann, Robert J.},
     TITLE = {The geometry of optimal transportation},
   JOURNAL = {Acta Math.},
  FJOURNAL = {Acta Mathematica},
    VOLUME = {177},
      YEAR = {1996},
    NUMBER = {2},
     PAGES = {113--161},
      ISSN = {0001-5962},
   MRCLASS = {49Q20 (28A99 35Q99 60B99 65K10)},
  MRNUMBER = {1440931},
MRREVIEWER = {Ludger R\"{u}schendorf},
       DOI = {10.1007/BF02392620},
       URL = {https://doi.org/10.1007/BF02392620},
}

@article {Gig11,
    AUTHOR = {Gigli, Nicola},
     TITLE = {On the inverse implication of {B}renier-{M}c{C}ann theorems
              and the structure of {$(\mathscr P_2(M),W_2)$}},
   JOURNAL = {Methods Appl. Anal.},
  FJOURNAL = {Methods and Applications of Analysis},
    VOLUME = {18},
      YEAR = {2011},
    NUMBER = {2},
     PAGES = {127--158},
      ISSN = {1073-2772},
   MRCLASS = {49Q20 (28A33)},
  MRNUMBER = {2847481},
MRREVIEWER = {Luca Granieri},
       DOI = {10.4310/MAA.2011.v18.n2.a1},
       URL = {https://doi.org/10.4310/MAA.2011.v18.n2.a1},
}

@book {Coh13,
    AUTHOR = {Cohn, Donald L.},
     TITLE = {Measure theory},
    SERIES = {Birkh\"{a}user Advanced Texts: Basler Lehrb\"{u}cher. [Birkh\"{a}user
              Advanced Texts: Basel Textbooks]},
   EDITION = {Second},
 PUBLISHER = {Birkh\"{a}user/Springer, New York},
      YEAR = {2013},
     PAGES = {xxi+457},
      ISBN = {978-1-4614-6955-1},
   MRCLASS = {28-01},
  MRNUMBER = {3098996},
MRREVIEWER = {Ville Suomala},
       DOI = {10.1007/978-1-4614-6956-8},
       URL = {https://doi.org/10.1007/978-1-4614-6956-8},
}

\end{document}